\documentclass{article}
\usepackage{setspace} 
\usepackage[left=1.5in,right=1.5in,top=1in,bottom=1in]{geometry}
\usepackage{amsfonts}
\usepackage{amsmath}
\usepackage{amssymb}
\usepackage{amsthm}
\usepackage{mathrsfs}
\usepackage{mathtools}
\usepackage{slashed}
\usepackage{enumerate}
\usepackage{hyperref}
\usepackage{cleveref}
\usepackage{tikz,tikz-cd}
\usepackage{extpfeil}
\usepackage{hhline}
\usepackage{float}
\usepackage{thmtools}
\usepackage{thm-restate}
\usepackage{xcolor}

\usepackage{mathbbol} 

\hypersetup{
    colorlinks=false,
    linkcolor=violet,
    filecolor=magenta,      
    urlcolor=cyan,
    citecolor=red,
    }

\makeatletter
\newcommand*{\doublerightarrow}[2]{\mathrel{
  \settowidth{\@tempdima}{$\scriptstyle#1$}
  \settowidth{\@tempdimb}{$\scriptstyle#2$}
  \ifdim\@tempdimb>\@tempdima \@tempdima=\@tempdimb\fi
  \mathop{\vcenter{
    \offinterlineskip\ialign{\hbox to\dimexpr\@tempdima+1em{##}\cr
    \rightarrowfill\cr\noalign{\kern.5ex}
    \rightarrowfill\cr}}}\limits^{\!#1}_{\!#2}}}

\newcommand\QQ{\mathbb{Q}}
\newcommand\CC{\mathbb{C}}

\newcommand\RR{\mathbb{R}}
\newcommand\NN{\mathbb{N}}
\newcommand\ZZ{\mathbb{Z}}

\numberwithin{equation}{section}

\theoremstyle{plain}
\newtheorem{theorem}{Theorem}[section]

\newtheorem{proposition}[theorem]{Proposition}
\newtheorem{corollary}[theorem]{Corollary}
\newtheorem{lemma}[theorem]{Lemma}

\theoremstyle{definition}
\newtheorem{definition}[theorem]{Definition}
\newtheorem{example}[theorem]{Example}

\theoremstyle{definition}
\newtheorem{remark}[theorem]{Remark}

\newtheorem{problem}[theorem]{Problem}

\newcommand{\del}{\partial}
\newcommand{\delbar}{{\overline{\partial}}}

\newcommand{\simplex}{{\mathbf{\Delta}}}
\newcommand{\set}{{\mathbf{Set}}}
\newcommand{\bss}{{\mathbf{BsSet}}}

\renewcommand{\ss}{{\mathbf{sSet}}}

\newcommand{\Fun}{\mathbf{Hom}}

\newcommand{\BC}{{\mathrm{BC}}}
\newcommand{\A}{{\mathrm{A}}}
\newcommand{\dbar}{{\overline{d}}}

\DeclareMathOperator{\im}{{\mathrm{im}}}

\author{Jiahao Hu}
\newcommand{\Addresses}{{
  \bigskip
  \footnotesize

  \textsc{Yau Mathematical Sciences Center, Tsinghua University,
    Beijing, China 100084}\par\nopagebreak
  Email: \texttt{jiahao.hu.math@gmail.com}

}}
\date{}
\title{Bott-Chern and Aeppli Homotopy}
\begin{document}
\maketitle

\begin{abstract}
	This paper introduces Bott-Chern and Aeppli \textit{homotopy} sets for a fibrant class of bisimplicial sets and establishes their basic properties. In positive bidegrees, Bott-Chern homotopy sets carry natural monoid structures, while Aeppli homotopy sets carry natural group structures. They are related by a loop-space comparison: after a bidegree shift, the Aeppli homotopy groups of $X$ are naturally identified with the Bott-Chern homotopy monoids of the loop space of $X$. In particular, the Bott-Chern homotopy monoids of loop spaces are groups.
	
	To justify our definitions, we show that the Bott-Chern homotopy monoids of a bisimplicial abelian group are naturally isomorphic to the Bott-Chern \textit{homology} groups of its associated normalized Moore bicomplex. An analogous statement holds for Aeppli homotopy.
	\end{abstract}
\section{Introduction}
The purpose of this paper is to define and establish basic properties of Bott-Chern and Aeppli homotopy sets for a fibrant class of bisimplicial sets. On the one hand, our work is a direct generalization of Kan's combinatorial definition of simplicial homotopy groups \cite{kan1958combinatorial}. On the other hand, this work can also be regarded as an attempt to de-localize Stelzig's definition of rational\footnote{Although Stelzig works over $\CC$, the relevant results also hold over $\QQ$.} Bott-Chern and Aeppli cohomotopy groups for rational commutative bigraded bidifferential algebras \cite{stelzig2025pluripotential}.

Stelzig's work is a bigraded enhancement of Sullivan's  approach to rational homotopy theory via commutative differential graded algebras \cite{sullivan1977infinitesimal}; by the work of Quillen and Sullivan, this algebraic theory models the rational localization of the Kan-Quillen simplicial homotopy theory under suitable nilpotency and finite type hypotheses \cite{quillen1969rational}\cite{sullivan1977infinitesimal}. We expect the Bott-Chern and Aeppli homotopy sets defined here to recover Stelzig's invariants after rational localization, although we do not treat that comparison in this paper.

Our guiding analogy is the Dold-Kan correspondence \cite{kan1958functors}. For simplicial abelian groups, homotopy groups recover the homology groups of the associated Moore complexes. Iterating Dold-Kan identifies bisimplicial abelian groups with non-negatively bigraded bicomplexes. Since Bott-Chern and Aeppli homology are natural invariants of bicomplexes, bisimplicial sets provide a natural ambient category for investigation.

The main technical difficulty lies in choosing an appropriation notion of fibration which allows the two simplicial directions to be used independently. Moreover this choice must provide enough fillers to define homotopy, loop objects and products, while still including bisimplicial groups as fibrant objects so that comparison with bicomplexes remains possible. It turns out that the standard notions of fibration for bisimplicial sets, such as Reedy fibrations or Bousfield-Kan fibrations, are not tailored to this problem. So we must define our own notion of fibration and study their properties, this is treated in \Cref{section-2}.

Once this fibrancy framework is fixed, the definition of Bott-Chern and Aeppli homotopy sets is intuitive. A Bott-Chern homotopy class is represented by a map from a standard bisimplex which is trivial on the full bisimplicial boundary. An Aeppli homotopy class is represented by a map satisfying a different boundary condition, modeled on the Aeppli cycles of a bicomplex.

Now the second difficulty is to define products on these homotopy sets. In ordinary simplicial homotopy theory, the group law is produced by filling horns. Here the relevant fillers are two-directional, making the construction less obvious and quite involved. We show that Bott-Chern homotopy sets carry natural monoid structures in positive bidegrees, while Aeppli homotopy sets carry natural group structures. The well-definedness and associativity of these operations depend essentially on the chosen notion of fibration.

The resulting Bott-Chern monoids and Aeppli homotopy groups have the expected formal behavior. Aeppli homotopy is naturally identified, up to a bidegree shift, with Bott-Chern homotopy of the loop space. In particular, Bott-Chern homotopy monoids of loop spaces are groups. Fiber sequences of fibrant bisimplicial sets give exact sequences relating the two homotopy sets.

Finally we justify our defintion by showing that, for a bisimplicial group, the Bott-Chern homotopy monoids and Aeppli homotopy groups defined here agree, in suitable bidegrees, with the Bott-Chern and Aeppli homology groups of its associated Moore bicomplex.
\subsection*{Acknowledgements} The author thanks Jianfeng Lin for organizing a seminar on simplicial homotopy theory, from which many of the ideas in this paper originated, and Jonas Stelzig for helpful discussions.

\section{Bisimplicial sets}\label{section-2}
\subsection{Basic definitions}
\begin{definition}[bisimplicial set]
A bisimplicial set $X$ is a bigraded set $X_{m,n}$ for $m,n\in\NN$ such that
\begin{enumerate}[(1)]
	\item for each $n$, $X_{\bullet,n}$ is a simplicial set with face and degeneracy maps $d_i, s_i$;
	\item for each $m$, $X_{m,\bullet}$ is a simplicial set with face and degeneracy maps $\dbar_j, \overline{s}_j$; and
	\item $\{d_i, s_i\}$ commute with $\{\dbar_j, \overline{s}_j\}$.
\end{enumerate}
We will call $X_{\bullet,n}$ (resp. $X_{m,\bullet}$) the $n$-th horizontal (resp. $m$-th vertical) slice of $X$.

Equivalently, a bisimplicial set is a functor $(\simplex\times\simplex)^{op}\to \set$ where $\simplex$ is the simplex category and $\set$ is the category of sets.
\end{definition}

\begin{example}[box product]
	For simplicial sets $X,Y$, their box product $X\boxtimes Y$ is the bisimplicial set
	\[
	(X\boxtimes Y)_{m,n}=X_m\times Y_n
	\]
	with the naturally induced bisimplicial structure, i.e. $d_i=d_i^X$, $s_i=s_i^X$ and $\overline{d}_j=d_j^Y$, $\overline{s}_j=s_j^Y$. We list below some bisimplicial sets that will be frequently used throughout.
	\begin{enumerate}[(1)]
		\item $\Delta^{m,n}=\Delta^m\boxtimes\Delta^n$. By the Yoneda lemma, for a bisimplicial set $X$, elements of $X_{m,n}$ can be identified with maps $\Delta^{m,n}\to X$.
		Elements of $X_{0,0}$ will be called vertices of $X$. We will denote $\Delta^{0,0}$, or a chosen vertex $\Delta^{0,0}\to X$ by $*$.
		
		\textbf{All of the following are sub-bisimplicial sets of $\Delta^{m,n}$.}
		\item $\partial\Delta^{m,n}=\partial\Delta^m\boxtimes\Delta^n=\bigcup_{i=0}^m d_i\Delta^{m,n}$.
		\item $\delbar\Delta^{m,n}=\Delta^m\boxtimes\partial\Delta^n=\bigcup_{j=0}^n\dbar_{j}\Delta^{m,n}$.
		\item $\delta\Delta^{m,n}=\partial\Delta^{m,n}\cup \delbar \Delta^{m,n}=\partial\Delta^m\boxtimes\Delta^n\cup \Delta^m\boxtimes\partial\Delta^n$.

The category of bisimplicial sets $\bss$ has a cellular model with generating monomorphisms given by
\[
\delta\Delta^{m,n}\hookrightarrow \Delta^{m,n}.
\]
This is to say that every bisimplicial set can be constructed by iteratively attaching $\Delta^{m,n}$ along $\delta\Delta^{m,n}$. See \cite[Example 1.3.4, Theorem 1.3.8]{cisinski2019higher}.
	\item $\del\delbar\Delta^{m,n}=\del\Delta^m\boxtimes\delbar\Delta^n=\bigcup_{i,j} d_i \dbar_j\Delta^{m,n}$.
		\item $\Lambda^m_k\boxtimes\Delta^n\cup\Delta^m\boxtimes\Lambda^n_l$ where $\Lambda^m_k=\bigcup\limits_{i\neq k} d_i\Delta^{m}$. For later use, we introduce the notation
		\[
		\Lambda^{m,n}:=\Lambda^m_m\boxtimes\Delta^n\cup\Delta^m\boxtimes\Lambda^n_n.
		\]
		\item $\delta_\A\Delta^{m,n}=\Lambda^m_m\boxtimes\Delta^n\cup\Delta^m\boxtimes\Lambda^n_n\cup \del\delbar\Delta^{m,n}=\Lambda^{m,n}\cup d_m\dbar_n\Delta^{m,n}$.
	\end{enumerate}
\end{example}

\subsection{Anodyne extensions and fibrations}
Recall that (e.g. \cite{gabriel2012calculus, goerss2009simplicial}) simplicial anodyne extensions are those monomorphisms belonging to the saturation of the horn inclusions $\Lambda^m_k\to \Delta^m$ where $m\ge 1$ and $0\le k\le m$.

\begin{definition}[anodyne extension]
	A monomorphism of bisimplicial sets is an anodyne extension if it belongs to the saturated class of monomorphisms generated by the maps
\[
\Lambda^m_k\boxtimes\Delta^n\cup \Delta^m\boxtimes\Lambda^n_l\to \Delta^m\boxtimes\Delta^n
\]
for $m\ge 1$, $n\ge 1$, $0\le k\le m$ and $0\le l\le n$.
\end{definition}

\begin{definition}[fibration]
	A bisimplicial map $p: X\to Y$ is a fibration (resp. trivial fibration) if it has the right lifting property (RLP) against all anodyne extensions (resp. monomorphisms). A bisimplicial set $X$ is fibrant (resp. contractible) if $X\to *$ is a fibration.
\end{definition}
\begin{remark}
	\begin{enumerate}[(1)]
		\item It is clear that fibrations (resp. trivial fibrations) are stable under composition, base change (i.e. pull-back), and any isomorphism is a (trivial) fibration.
		\item By a standard small object argument, anodyne extensions are \textit{exactly} those monomorphisms having left lifting property (LLP) against all fibrations. See \cite[Theorem 2.1.14, Corollary 2.1.15]{hovey2007model}.
	\end{enumerate}
\end{remark}
\begin{definition}[Reedy matching objects]
	Let $X$ be a bisimplicial set and $K$ a simplicial set. We define simplicial sets $M_K(X)$ and $\overline{M}_K(X)$ to be
	\begin{align*}
		M_K(X)_m &=Hom_\ss(K, X_{m,\bullet}),\\
		\overline{M}_K(X)_n&=Hom_\ss(K, X_{\bullet,n}),
	\end{align*}
	with naturally induced simplicial structures. 
\end{definition}

\begin{proposition}[cf. \cite{moore1954homotopie}]\label{bisimplicial-Moore}
	The underlying bisimplicial set of a bisimplicial group is fibrant. More generally, epimorphisms between bisimplicial groups are fibrations.
\end{proposition}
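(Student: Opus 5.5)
Because anodyne extensions are the saturation of the generating maps, and maps with a right lifting property against a set of maps automatically lift against its saturation, it is enough to show the following. For an epimorphism $p\colon G\to H$ of bisimplicial groups, every commutative square from $\Lambda^m_k\boxtimes\Delta^n\cup\Delta^m\boxtimes\Lambda^n_l\hookrightarrow\Delta^{m,n}$ to $p$ has a diagonal filler. The first claim is the special case $H=*$, the trivial group, since $G\to *$ is an epimorphism. I would rephrase the lifting problem with the matching objects. Set $A=\Lambda^m_k\boxtimes\Delta^n$ and $B=\Delta^m\boxtimes\Lambda^n_l$, so that $A\cap B=\Lambda^m_k\boxtimes\Lambda^n_l$. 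A map from the pushout $A\cup B$ into $G$ is then a pair: an $n$-simplex of the simplicial group $\overline{M}_{\Lambda^m_k}(G)$ and an $m$-simplex of $M_{\Lambda^n_l}(G)$ that agree on the overlap.

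I would argue in two steps. \textbf{Step 1.} Start from the given data on $A$, namely a map $\Lambda^m_k\boxtimes\Delta^n\to G$, and the compatible map $\Delta^{m,n}\to H$. These form a lifting problem for the simplicial map of simplicial groups
\[
 G_{\bullet,n}\to H_{\bullet,n}\times_{\overline{M}_{\Lambda^m_k}(H)_n}\overline{M}_{\Lambda^m_k}(G)_n
\]
against $\Lambda^m_k\subset\Delta^m$, applied levelwise in $n$. Moore's theorem says a surjective homomorphism of simplicial groups is a Kan fibration. Applying it to the horizontal slice $G_{\bullet,n}\to H_{\bullet,n}$, which is surjective, gives a horn filler $x\colon\Delta^{m,n}\to G$. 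This $x$ extends the data on $A$ and covers the map to $H$. It need not agree with the given data $b$ on $B$.

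\textbf{Step 2.} I would correct $x$ using the group structure. On $B$, the discrepancy $e=b\cdot(x|_B)^{-1}$ is a map $\Delta^m\boxtimes\Lambda^n_l\to G$. It is trivial on $A\cap B$ and maps to the identity in $H$. So $e$ is a map into the kernel $K=\ker p$, and $K$ is again a bisimplicial group. Now consider the vertical direction, where the problem is to fill $\Lambda^n_l\subset\Delta^n$ in $M$-form for the simplicial groups $K_{m,\bullet}$. I need the filler to stay trivial on $\Lambda^m_k$ in the horizontal direction. To arrange this, I work in the simplicial group $N_\bullet$ of elements of $K_{m,\bullet}$ whose faces $d_i$, for $i\neq k$, are trivial. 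That is the kernel of the restriction $K_{m,\bullet}\to\overline{M}_{\Lambda^m_k}(K)_\bullet$, which is a simplicial subgroup because the $d_i$ commute with the $\overline d_j,\overline s_j$. The discrepancy $e$ is exactly a $\Lambda^n_l$-horn in $N_\bullet$, since its restriction to $\Lambda^m_k\boxtimes\Lambda^n_l$ is trivial. By Moore's theorem, $N_\bullet$ is Kan, so $e$ has a filler $\tilde e\in N_n$. Then $y=\tilde e\cdot x$ restricts to $b$ on $B$. On $A$ it still equals $x$, because $\tilde e$ is trivial there. Finally $p(y)=p(x)$, because $\tilde e\in K$. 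So $y$ solves the lifting problem.

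\textbf{Main obstacle.} The delicate part is the bookkeeping in Step 2. I must check that "trivial on $\Lambda^m_k$" really defines a simplicial subgroup in the vertical direction, so that Moore's theorem applies to $N_\bullet$. I must also check that $e$ is a genuine horn of $N_\bullet$, meaning that its components satisfy the simplicial face compatibilities. This requires that products and inverses are computed levelwise and commute with all faces. The rest is formal.
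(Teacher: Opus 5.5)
Your argument is correct, but it is organized differently from the paper's.

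The paper does not split the problem into stages. Using the adjunction with matching objects, it turns a lifting problem for $\Lambda^m_k\boxtimes\Delta^n\cup\Delta^m\boxtimes\Lambda^n_l\hookrightarrow\Delta^{m,n}$ against $G\to H$ into a lifting problem for $\Lambda^m_k\hookrightarrow\Delta^m$ against a single map of simplicial groups in the horizontal variable:
\[
M_{\Delta^n}(G)\to M_{\Lambda^n_l}(G)\times_{M_{\Lambda^n_l}(H)}M_{\Delta^n}(H).
\]
Moore's theorem applied to the vertical slices $G_{m,\bullet}\to H_{m,\bullet}$ shows this map is levelwise surjective. A second application of Moore shows it is a Kan fibration, so the lift exists.

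You instead fill the horizontal horn first, using Moore for $G_{\bullet,n}\to H_{\bullet,n}$. You then repair the vertical boundary by multiplying by a correction $\tilde e$, obtained by filling a vertical horn in the simplicial subgroup $N_\bullet=\ker\bigl(K_{m,\bullet}\to\overline{M}_{\Lambda^m_k}(K)_\bullet\bigr)$. The checks you flagged as the main obstacle all go through for the reasons you give:
\begin{enumerate}
\item $N_\bullet$ is closed under $\dbar_j,\overline{s}_j$ because these commute with the $d_i$.
\item $e$ factors through $N_\bullet$ because it is trivial on $\Lambda^m_k\boxtimes\Lambda^n_l$.
\item $\tilde e$ restricts trivially to $\Lambda^m_k\boxtimes\Delta^n$ because $d_i\tilde e=1$ for $i\neq k$.
\end{enumerate}

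Your route is more explicit. It writes the filler as $y=\tilde e\cdot x$ and uses the group law on $\mathrm{Hom}(B,G)$ directly, in effect redoing by hand the translation trick behind ``surjective homomorphisms of simplicial groups are Kan fibrations.'' It also avoids fibered products of matching objects. The paper's route is shorter and more structural: everything reduces to one statement, that the relative matching map is a surjective homomorphism. That form is the one that transfers most easily to other lifting problems.

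One small slip: the displayed map in your Step 1 is ill-typed, since it mixes the horizontal simplicial set $H_{\bullet,n}$ with the sets $\overline{M}_{\Lambda^m_k}(-)_n$. What you actually use is that $G_{\bullet,n}\to H_{\bullet,n}$ is a Kan fibration, and that is exactly what is needed.
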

Recall that Moore's theorem \cite{moore1954homotopie} asserts that every epimorphism of simplicial groups is a Kan fibration, and in particular every simplicial group is a Kan complex.

\begin{proof}
	Let $G$ be a bisimplicial group. Then by Moore's theorem, the natural restriction map
	\[
	i^*: M_{\Delta^n}(G)\to M_{\Lambda^{n}_l}(G)
	\]
	is an epimorphism of simplicial groups, and thus again by Moore's theorem is a Kan fibration. Therefore $i^*$ has RLP with respect to $\Lambda^m_k\to \Delta^m$, implying that $G$ is fibrant.
	The proof for the general case is the same, it suffices to replace $i^*$ by
	$M_{\Delta^n}(G)\to M_{\Lambda^{n}_l}(G)\times_{M_{\Lambda^n_l}(H)}M_{\Delta^n}(H)$ where $G\to H$ is an epimorphism.
\end{proof}

\begin{lemma}
Let $i: A\to B$, $j: K\to L$ be anodyne extensions of simplicial sets. Then
\[
i\hat{\boxtimes} j: A\boxtimes L\cup_{A\boxtimes K} B\boxtimes K\to B\boxtimes L
\]
is anodyne.
\end{lemma}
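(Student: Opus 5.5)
The plan is the standard saturation argument, carried out one variable at a time. Let $\mathcal{S}$ be the class of monomorphisms of simplicial sets. I first check that $i\hat{\boxtimes} j$ is a monomorphism whenever $i$ and $j$ are. This holds levelwise, because $(X\boxtimes Y)_{m,n}=X_m\times Y_n$ and the pushout-product of two injections of sets is injective. Next I fix a horn inclusion $j=(\Lambda^n_l\to\Delta^n)$ with $n\ge1$ and consider
\[
\mathcal{C}_j=\{\,i\in\mathcal{S} : i\hat{\boxtimes} j \text{ is anodyne}\,\}.
\]
The claim is that $\mathcal{C}_j$ is saturated, i.e. closed under pushouts, transfinite compositions and retracts. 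The functor $(-)\hat{\boxtimes} j$ from arrows of $\ss$ to arrows of $\bss$ preserves colimits, since $\boxtimes$ preserves colimits in each variable separately (colimits are computed levelwise). Hence it sends pushouts to pushouts, transfinite compositions to transfinite compositions, and retracts to retracts, up to the usual identification.

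The pushout case deserves a word. If $i'$ is the cobase change of $i$ along $A\to A'$, then $i'\hat{\boxtimes} j$ is a cobase change of $i\hat{\boxtimes} j$. This follows from the standard pasting of pushout squares in the arrow category. The composition case uses the fact that $(i_2\circ i_1)\hat{\boxtimes} j$ factors as a cobase change of $i_1\hat{\boxtimes} j$ followed by $i_2\hat{\boxtimes} j$. Since anodyne extensions form a saturated class, $\mathcal{C}_j$ is saturated.

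The generating case is immediate from the definition. For $i=(\Lambda^m_k\to\Delta^m)$ with $m\ge1$, the map $i\hat{\boxtimes} j$ is literally
\[
\Lambda^m_k\boxtimes\Delta^n\cup\Delta^m\boxtimes\Lambda^n_l\to\Delta^m\boxtimes\Delta^n,
\]
which is a generator of the anodyne class. Because $\mathcal{C}_j$ is saturated and contains all horn inclusions, it contains every simplicial anodyne extension $i$.

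Now I fix an arbitrary simplicial anodyne $i$ and run the same argument in the second variable. The class $\{\,j : i\hat{\boxtimes} j \text{ anodyne}\,\}$ is saturated by the symmetric reasoning. By the previous step it contains all horn inclusions, so it contains all anodyne $j$. This proves the lemma.

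The only point requiring care is the saturation step, namely that $(-)\hat{\boxtimes} j$ preserves pushouts and transfinite compositions in the arrow category. This is the main (if routine) obstacle. The cleanest way to handle it is to observe that $\boxtimes:\ss\times\ss\to\bss$ is a two-variable left adjoint. Its right adjoints are given by the matching objects $M_K(X)$ and $\overline{M}_K(X)$ defined above: a map $A\boxtimes K\to X$ is the same as a map $A\to M_K(X)$, where $M_K(X)$ is viewed as a simplicial set in the first variable. The pushout-product is then adjoint to a pullback-corner map. By adjunction, $i\hat{\boxtimes} j$ has the LLP against a fibration $p$ if and only if $i$ has the LLP against the corner map
\[
M_{L}(X)\to M_{K}(X)\times_{M_K(Y)}M_L(Y).
\]
So an alternative route is the following. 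By Remark (2), anodyne extensions are exactly the monomorphisms with the LLP against fibrations. For $j$ a horn, the corner map above is a Kan fibration: it has the RLP against horns because the generators are anodyne. Kan fibrations have the RLP against all simplicial anodyne $i$. This gives the first step directly, and the symmetric argument with $\overline{M}$ handles general $j$.
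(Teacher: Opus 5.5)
Your proof is correct. Your primary argument differs from the paper's; your ``alternative route'' is exactly the paper's proof.

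In your primary argument you show directly that the class of $i$ with $i\hat{\boxtimes} j$ anodyne is saturated. You check that $(-)\hat{\boxtimes} j$, and symmetrically $i\hat{\boxtimes}(-)$, sends cobase changes, transfinite composites and retracts to constructions of the same kind, using only that $\boxtimes$ preserves colimits in each variable.

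The paper instead fixes a fibration $p$. It uses the adjunction with the matching objects $M_K(X)$ to turn the lifting problem for $i\hat{\boxtimes} j$ against $p$ into a lifting problem for $i$ against the corner map $M_L(X)\to M_K(X)\times_{M_K(Y)}M_L(Y)$. (The paper writes this target loosely as a plain product; your fibered product is the correct one.) It then reduces to horns in each variable in turn. The saturation step is free there, because any class defined by a left lifting property is automatically saturated.

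The trade-off is this:
\begin{itemize}
\item The paper's route depends on Remark (2), i.e.\ the small object argument identifying anodyne extensions with the monomorphisms having the LLP against all fibrations.
\item Your direct route avoids that input. It places $i\hat{\boxtimes} j$ in the saturation of the generators by construction, at the cost of the routine arrow-category verifications you flagged.
\item Your route also adapts verbatim to item (3) of the subsequent corollary (generation by $i\hat{\boxtimes} j$ for $i,j$ in any generating set of simplicial anodyne extensions), with no further appeal to lifting properties.
\end{itemize}
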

\begin{proof}
	Let $p: X\to Y$ be a fibration and $j$ a simplicial anodyne extension. To show that $i\hat{\boxtimes}j$ is anodyne amounts to showing that the dashed arrow in
	\[
	\begin{tikzcd}
		A\boxtimes L\cup_{A\boxtimes K} B\boxtimes K\ar[d,"i\hat{\boxtimes}j"']\ar[r] & X\ar[d,"p"]\\
		B\boxtimes L\ar[r]\ar[ur,dotted] & Y
	\end{tikzcd}
	\]
	exists for each $p$. But this is equivalent to that dashed arrow in
	\[
	\begin{tikzcd}
		A\ar[d,"i"']\ar[r] & M_L(X)\ar[d,"{(p_*,j^*)}"]\\
		B\ar[r]\ar[ur,dotted] & M_L(Y)\times M_K(X)
	\end{tikzcd}
	\]
	exists. Since the class of monomorphisms $i$ having LLP against $(p_*,j^*)$ is saturated, we can assume that $i$ is a generating simplicial anodyne extension, say $i:\Lambda^m_k\to \Delta^m$. By symmetry we can also assume that $j$ is a generating simplicial anodyne extension, say $j:\Lambda^n_l\to \Delta^n$. Then $i\hat{\boxtimes}j$ is by definition anodyne. 
\end{proof}

\begin{corollary}[Gabriel-Zismann property]
	The class of anodyne extensions is generated by either of the following classes of monomorphisms:
	\begin{enumerate}[(1)]
		\item $\Lambda^m_k\boxtimes\Delta^n\cup \Delta^m\boxtimes\Lambda^n_l\to \Delta^m\boxtimes\Delta^n$ for $m\ge 1$, $n\ge 1$, $0\le k\le m$ and $0\le l\le n$.
		\item $i\hat{\boxtimes}j$ where $i,j$ are simplicial anodyne extensions.
		\item $i\hat{\boxtimes}j$ where $i,j$ belong to a generating set of simplicial anodyne extensions.
		\item $\delta\Delta^{m,n}\times\Delta^{1,1}\cup \Delta^{m,n}\times \mathsf{L}_{\epsilon,\mu}^{1,1}\to \Delta^{m,n}\times\Delta^{1,1}$ where \[\mathsf{L}_{\epsilon,\mu}^{1,1}
		=\Lambda^1_\epsilon\boxtimes\Delta^1\cup\Delta^1\boxtimes \Lambda^1_\mu\]
		in which $\epsilon,\mu\in\{0,1\}$, and $m\ge 0, n\ge 0$.
		\item $A\times \Delta^{1,1}\cup B\times \mathsf{L}^{1,1}_{\epsilon,\mu}\to B\times \Delta^{1,1}$ for all monomorphisms $A\to B$ and $\epsilon,\mu\in\{0,1\}$.
	\end{enumerate}
\end{corollary}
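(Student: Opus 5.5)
The plan is to show that all five classes have the same saturation, namely the class of anodyne extensions. The chain of inclusions is (1) $\subseteq$ (3) $\subseteq$ (2) $\subseteq$ anodyne, then (4) $\subseteq$ saturation of (3) and (3) $\subseteq$ saturation of (4), and finally (4) $\subseteq$ (5) $\subseteq$ anodyne.

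For (1)--(3): the maps in (1) are by definition $i\hat{\boxtimes}j$ with $i=(\Lambda^m_k\to\Delta^m)$ and $j=(\Lambda^n_l\to\Delta^n)$. These horn inclusions form a generating set of simplicial anodyne extensions, and every such generating set consists of simplicial anodyne extensions. Hence class (1) lies in class (3), and class (3) lies in class (2). By the preceding Lemma, every map in class (2) is anodyne. Since the anodyne extensions are by definition the saturation of (1), all three saturations coincide.

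For (4): the key observation is that products of box products are computed factorwise, $(X\boxtimes Y)\times(X'\boxtimes Y')=(X\times X')\boxtimes(Y\times Y')$. Consequently $\Delta^{m,n}\times\Delta^{1,1}=(\Delta^m\times\Delta^1)\boxtimes(\Delta^n\times\Delta^1)$. Put
\[
i_\epsilon:\partial\Delta^m\times\Delta^1\cup\Delta^m\times\Lambda^1_\epsilon\to\Delta^m\times\Delta^1,\qquad
j_\mu:\partial\Delta^n\times\Delta^1\cup\Delta^n\times\Lambda^1_\mu\to\Delta^n\times\Delta^1.
\]
I will check that the map in (4) is exactly $i_\epsilon\hat{\boxtimes}j_\mu$. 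Expanding, the domain in (4) is
\[
(\partial\Delta^m\times\Delta^1)\boxtimes(\Delta^n\times\Delta^1)\cup(\Delta^m\times\Delta^1)\boxtimes(\partial\Delta^n\times\Delta^1)\cup(\Delta^m\times\Lambda^1_\epsilon)\boxtimes(\Delta^n\times\Delta^1)\cup(\Delta^m\times\Delta^1)\boxtimes(\Delta^n\times\Lambda^1_\mu).
\]
Regrouping the four terms gives the domain of $i_\epsilon\hat{\boxtimes}j_\mu$. Both are unions of subobjects of the same bisimplicial set, so no gluing issue arises.

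By the classical Gabriel--Zisman theorem, the maps $i_\epsilon$ for $m\ge0$ and $\epsilon\in\{0,1\}$ generate the simplicial anodyne extensions; the case $m=0$ is $\Lambda^1_\epsilon\to\Delta^1$. Hence class (4) is an instance of class (3) for this generating set, and so has the same saturation as (1).

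For (5): class (5) contains class (4), taking $A=\delta\Delta^{m,n}$ and $B=\Delta^{m,n}$. It remains to show every map in (5) is anodyne. Consider the class $\mathcal{C}$ of monomorphisms $A\to B$ for which $A\times\Delta^{1,1}\cup B\times\mathsf{L}^{1,1}_{\epsilon,\mu}\to B\times\Delta^{1,1}$ is anodyne. I will argue that $\mathcal{C}$ is saturated. The pushout-product with the fixed mono $\mathsf{L}^{1,1}_{\epsilon,\mu}\to\Delta^{1,1}$ commutes with pushouts, transfinite compositions and retracts, because $-\times\Delta^{1,1}$ preserves colimits in the presheaf category $\bss$. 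The target class of anodyne maps is saturated.

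By (4), $\mathcal{C}$ contains the generators $\delta\Delta^{m,n}\to\Delta^{m,n}$. By the cellular model cited earlier, these generate all monomorphisms, so $\mathcal{C}$ contains every monomorphism.

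I expect the main obstacle to be this saturation argument. Specifically, I need to verify that pushout-products of transfinite composites and pushouts are again composites and pushouts of pushout-products. This is the standard Joyal--Tierney style lemma; I would simply cite it in the form used in \cite{hovey2007model} or \cite{cisinski2019higher}, adapting it to presheaves on $\simplex\times\simplex$.
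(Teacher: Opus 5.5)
Your identification of the map in (4) with $i_\epsilon\hat{\boxtimes}j_\mu$, via $\Delta^{m,n}\times\Delta^{1,1}=(\Delta^m\times\Delta^1)\boxtimes(\Delta^n\times\Delta^1)$, is correct. Your treatment of (5) is the paper's cellular argument made explicit.

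The gap is the passage from (3) to (4). In your (1)--(3) paragraph, ``class (1) lies in class (3)'' holds only for the generating set of horns. For an arbitrary generating set $S$ you have shown only that the maps $i\hat{\boxtimes}j$ with $i,j\in S$ lie in class (2), and hence are anodyne. You then say that class (4), as an instance of (3) for $S=\{i_\epsilon\}$, ``has the same saturation as (1)''. That needs the reverse inclusion: every $(\Lambda^m_k\to\Delta^m)\hat{\boxtimes}(\Lambda^n_l\to\Delta^n)$ must lie in the saturation of the maps $i_\epsilon\hat{\boxtimes}j_\mu$. Being contained in a class whose saturation is the anodyne extensions gives only one inclusion. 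As written, you have proved that the maps in (4) and (5) are anodyne, not that they generate. Your outline announces ``(3) $\subseteq$ saturation of (4)'', but this is never argued.

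The missing step is the two-variable saturation argument from the proof of the preceding Lemma. This is what the paper means by ``(3) follows from (2) by the proof of the previous lemma''.

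Let $\mathcal{S}$ be the saturation of $\{i\hat{\boxtimes}j : i,j\in S\}$. Fix $j\in S$ and consider the class of monomorphisms $i$ of simplicial sets with $i\hat{\boxtimes}j\in\mathcal{S}$. This class is saturated: $-\boxtimes K$ preserves colimits, so $-\hat{\boxtimes}j$ sends pushouts, transfinite composites and retracts to pushouts, transfinite composites of pushouts, and retracts. The class contains $S$, so it contains every simplicial anodyne extension.

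Now fix a horn $i$ and repeat the argument in the second variable; the first step shows this class contains $S$. This puts all horn-by-horn maps in $\mathcal{S}$. That is (3) for every generating set, which gives (4) and then (5).

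This is the same closure fact you already plan to cite for (5), applied to $\hat{\boxtimes}$ rather than to the pushout-product with $\mathsf{L}^{1,1}_{\epsilon,\mu}\to\Delta^{1,1}$. The repair is short, but without it the substantive direction of (4) and (5) is missing.
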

\begin{proof}
	(1) is definition. (2) follows from the previous lemma and that maps in (1) are special cases of (2). (3) follows from (2) by the proof of the previous lemma. (4) follows from (3) by recalling that the class of simplicial anodyne extensions is also generated by
	\[
	\partial\Delta^m\times\Delta^1\cup \Delta^m\times \Lambda^1_\epsilon\to \Delta^m\times\Delta^1.
	\]
	(5) follows from (4) by noting that $B$ can be obtained from $A$ by attaching $\Delta^{m,n}$ along $\delta\Delta^{m,n}\to \Delta^{m,n}$.
\end{proof}

\begin{proposition}\label{pushout-product}
	Let $i: A\to B$ and $j: K\to L$ be monomorphisms of bisimplicial sets. Then the induced monomorphism
	\[
	A\times L \cup_{A\times K} B\times K\to  B\times L
	\]
	is anodyne if either one of $i$ or $j$ is anodyne.
\end{proposition}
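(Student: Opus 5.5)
By symmetry of the product, we may assume $i$ is anodyne and $j$ is an arbitrary monomorphism. The plan follows the standard Gabriel--Zisman reduction. For a fixed monomorphism $j$, write $i\,\square\, j$ for the induced monomorphism $A\times L\cup_{A\times K}B\times K\to B\times L$. The class of monomorphisms $i$ for which $i\,\square\, j$ is anodyne is saturated, since it is closed under pushouts, transfinite compositions and retracts. The reason is that $-\,\square\, j$ preserves these colimit constructions (products in $\bss$ commute with colimits, being computed levelwise in $\set$), and anodyne extensions form a saturated class. Equivalently, we can argue by adjunction: $i\,\square\, j$ has the LLP against a fibration $p:X\to Y$ iff $i$ has the LLP against the induced map $X^L\to X^K\times_{Y^K}Y^L$ of internal homs in $\bss$. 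The class of such $i$ is saturated by general principles. So it suffices to treat $i$ in a generating set of anodyne extensions.

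For the generators I would use description (5) of the Gabriel--Zisman corollary: $i$ is $A'\times\Delta^{1,1}\cup B'\times \mathsf{L}^{1,1}_{\epsilon,\mu}\to B'\times\Delta^{1,1}$ for a monomorphism $A'\to B'$. Set $h=\mathsf{L}^{1,1}_{\epsilon,\mu}\to\Delta^{1,1}$ and $k=(A'\to B')$, so $i=k\,\square\, h$. By associativity and commutativity of the pushout-product (formal, since $\times$ is symmetric monoidal and commutes with colimits in each variable),
\[
i\,\square\, j=(k\,\square\, h)\,\square\, j\cong (k\,\square\, j)\,\square\, h .
\]
Here $k\,\square\, j$ is again a monomorphism, because pushout-products of monomorphisms in a presheaf topos are monomorphisms; one checks this levelwise in $\set$. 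Writing $k\,\square\, j$ as $A''\to B''$, the map $(k\,\square\, j)\,\square\, h$ is exactly of the form in (5), hence anodyne.

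The main obstacle is making sure the generating class in (5) is legitimately available. It is, as the corollary establishes it. The remaining care lies in two points. First, the pushout-product associativity isomorphism must be compatible as maps over $B'\times L\times\Delta^{1,1}$, not just as abstract objects. Second, the saturation argument in the first step needs pushout-products to preserve the relevant colimits. Both are routine category theory in presheaf categories. Note that with (5) available, we do not need to first reduce $j$ to the boundary inclusions $\delta\Delta^{m,n}\to\Delta^{m,n}$, since arbitrary monomorphisms $j$ are absorbed directly into the monomorphism $A''\to B''$.
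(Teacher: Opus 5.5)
Your proposal is correct and is essentially the paper's own argument: fix $j$, use saturation to reduce $i$ to the generators of form (5) in the Gabriel--Zisman corollary, and observe that the resulting map is again of form (5). Your identity $(k\,\square\,h)\,\square\,j\cong(k\,\square\,j)\,\square\,h$, with $k\,\square\,j$ a monomorphism, is exactly the ``straightforward computation'' that the paper leaves to the reader.
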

\begin{proof}
	For a fixed $j$, the class of monomorphisms $i$ for which the map in question is anodyne is saturated, so we can take $i$ to be a map of the form (5) in the previous corollary. Then a straightforward computation shows that the map under inspection is also of the form (5).
	
	Alternatively for $i:\delta\Delta^{p,q}\to \Delta^{p,q}$ and $j: \Lambda^m_k\boxtimes\Delta^n\cup\Delta^m\boxtimes\Lambda^n_l\to \Delta^{m}\boxtimes\Delta^n$. The map under inspection is
	\begin{align*}
		(\Lambda^m_k\times\Delta^p \cup \Delta^m\times \partial\Delta^p)\boxtimes(\Delta^n\times\Delta^q)&\cup (\Delta^m\times\Delta^p)\boxtimes(\Lambda^n_l\times\Delta^q\cup \Delta^n\times\partial\Delta^q)\\
		&\to (\Delta^m\times\Delta^p)\boxtimes(\Delta^n\times\Delta^q).
	\end{align*}
	Since $\Lambda^m_k\times\Delta^p \cup \Delta^m\times \partial\Delta^p\to \Delta^m\times\Delta^p$ and $\Lambda^n_l\times\Delta^q\cup \Delta^n\times\partial\Delta^q\to \Delta^n\times\Delta^q$ are simplicial anodyne maps, so is the above map by the previous corollary.
\end{proof}
\begin{remark}
	The class of anodyne extensions is chosen so that \textit{both} \Cref{bisimplicial-Moore} and \Cref{pushout-product} hold. The author has tried other definitions of anodyne extensions, but usually either \Cref{bisimplicial-Moore} or \Cref{pushout-product} fails. For example, if one allows $\partial\Delta^m\boxtimes \Delta^n\cup\Delta^m\boxtimes\partial\Delta^n\to \Delta^{m,n}$ to be anodyne, then \Cref{bisimplicial-Moore} fails.
\end{remark}

The following examples and variants will be frequently used later.
\begin{example}\label{example-1}
	Let $\Lambda^{m+1}_{p,q}$ denote $\bigcup_{i\neq p,q}d_i\Delta^{m+1}$ and let $\Gamma^{m+1,n+1}$ denote \[\Lambda^{m+1}_{m,m+1}\boxtimes\Delta^{n+1}\cup\Delta^{m+1}\boxtimes\Lambda^{n+1}_{n,n+1}.\]
	Then
	\[
	\Gamma^{m+1,n+1}\cup d_m\dbar_n\Delta^{m+1,n+1}\hookrightarrow\Delta^{m+1,n+1}
	\]
	is anodyne for $m\ge 1$ and $n\ge 1$. This follows from that $\Lambda^{m+1,n+1}\hookrightarrow\Delta^{m+1,n+1}$ is anodyne and the pushout diagram below.
	\[
	\begin{tikzcd}
		\Lambda^{m,n+1}\sqcup \Lambda^{m+1,n}\ar[d]\ar[r]& \Gamma^{m+1,n+1}\cup d_m\dbar_n\Delta^{m+1,n+1}\ar[d]\\
		\Delta^m\boxtimes\Delta^{n+1}\sqcup \Delta^{m+1}\boxtimes\Delta^n\ar[r,"d_m\boxtimes 1\sqcup 1\boxtimes \dbar_n"] & \Lambda^{m+1,n+1}
	\end{tikzcd}
	\]
	Similarly $\Gamma^{m+1,n+1}\cup d_{m+1}\dbar_{n+1}\Delta^{m+1,n+1}\hookrightarrow\Delta^{m+1,n+1}$ is also anodyne.
\end{example}

\begin{example}\label{example-2} Let $\Lambda^{m+2}_{<m}$ denote $\bigcup_{i<m}d_i\Delta^{m+2}$. Then the inclusion
	\[
	(\Lambda^{m+2}_{<m}\boxtimes\Delta^{n+2}\cup\Delta^{m+2}\boxtimes\Lambda^{n+2}_{<n})\cup d_{m+1}\dbar_{n+1}\Delta^{m+2,n+2}\hookrightarrow\Delta^{m+2,n+2}
	\]
	is anodyne. To see this, we note that one can first attach
	\[
	d_{m+1}\Delta^{m+2,n+2}\cup \dbar_{n+1}\Delta^{m+2,n+2}\cong \Delta^{m+1}\boxtimes\Delta^{n+2}\cup \Delta^{m+2}\boxtimes\Delta^{n+1}
	\]
	to the domain along their intersection
	\[
	(\Lambda^{m+1}_{m,m+1}\boxtimes\Delta^{n+2}\cup \Delta^{m+1}\boxtimes\Lambda^{n+2}_{n,n+2})\cup (\Lambda^{m+2}_{m,m+2}\boxtimes\Delta^{n+1}\cup \Delta^{m+2}\boxtimes\Lambda^{n+1}_{n,n+1}).
	\]
	The resulting space is $\Lambda^{m+2}_{m,m+2}\boxtimes\Delta^{n+2}\cup \Delta^{m+2}\boxtimes\Lambda^{n+2}_{n,n+2}$ whose inclusion into $\Delta^{m+2,n+2}$ is anodyne.
\end{example}

\subsection{Mapping spaces}
\begin{definition}
	Let $X,Y$ be bisimplicial sets. Define the mapping space $\Fun(X,Y)$ to be the bisimplicial set
\[
\Fun(X,Y)_{m,n}=Hom_\bss(X\times\Delta^{m,n}, Y)
\]
with the naturally induced bisimplicial structure.
\end{definition}
\begin{remark}[exponential law]
	There is a natural evaluation map 
	\[
	X\times \Fun(X, Y)\xrightarrow{eval.} Y,
	\]
which induces a bijection
	\[
	Hom_{\bss}(K, \Fun(X,Y))\to Hom_\bss(K\times X, Y)
	\]
	that is natural in $K, X$ and $Y$.
\end{remark}

Let $i:A\to B$ be a monomorphism, and $p: X\to Y$ a fibration, consider the  general lifting problem
\[
\begin{tikzcd}
	A\ar[r]\ar[d,"i"'] & X\ar[d,"p"]\\
	B\ar[r]\ar[ur, dotted] & Y
\end{tikzcd}
\]

\begin{proposition} The induced map
	\[\Fun(B,X)\xrightarrow{(i^*, p_*)} \Fun(A,X)\times_{\Fun(A,Y)} \Fun(B,Y)\]
	is a fibration.
\end{proposition}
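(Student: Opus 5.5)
To show that $(i^*,p_*)$ is a fibration, I will check the right lifting property against every anodyne extension $j\colon K\to L$, translating each lifting problem through the exponential law. A commutative square
\[
\begin{tikzcd}
	K\ar[r]\ar[d,"j"'] & \Fun(B,X)\ar[d,"{(i^*,p_*)}"]\\
	L\ar[r]\ar[ur,dotted] & \Fun(A,X)\times_{\Fun(A,Y)}\Fun(B,Y)
\end{tikzcd}
\]
amounts to the following data:
\begin{itemize}
\item a map $K\times B\to X$;
\item a map $L\times A\to X$;
\item a map $L\times B\to Y$.
\end{itemize}
Commutativity of the square says these data are compatible. The maps $K\times B\to X$ and $L\times A\to X$ agree on $K\times A$. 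Composing with $p$, they also agree with the restrictions of $L\times B\to Y$.

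Since the product is symmetric, the first two maps glue to a single map
\[
A\times L\cup_{A\times K} B\times K\to X
\]
lying over the map $B\times L\to Y$. Here I use that both pieces are sub-bisimplicial sets of $B\times L$ meeting in $A\times K$, and that colimits in $\bss$ are computed levelwise.

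A diagonal filler $L\to\Fun(B,X)$ of the original square then corresponds, again by the exponential law and its naturality in $K$, $X$, $Y$, to a map $B\times L\to X$. This map must extend the glued map and lift $B\times L\to Y$ along $p$. So the original problem is equivalent to the lifting problem of the pushout-product
\[
A\times L\cup_{A\times K}B\times K\to B\times L
\]
against $p$.

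By \Cref{pushout-product}, this pushout-product is anodyne, because $j$ is anodyne and $i$ is a monomorphism. Since $p$ is a fibration, it has the right lifting property against anodyne extensions, so a lift exists. Transporting it back gives the required filler $L\to\Fun(B,X)$.

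The only step needing care is the bookkeeping in the adjunction. One must check that the upper triangle of the original square corresponds to the restriction of the lift to both $K\times B$ and $L\times A$. One must also check that the lower triangle corresponds to $p$ composed with the lift and to the restriction along $i$. Both follow from naturality of the exponential bijection in each variable.
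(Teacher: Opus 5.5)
Your proof is correct and is the argument the paper intends. The paper's one-line proof (``follows formally from \Cref{pushout-product}'') is exactly this exponential-law translation: a lifting problem for $(i^*,p_*)$ against an anodyne $j\colon K\to L$ becomes a lifting problem for $A\times L\cup_{A\times K}B\times K\to B\times L$ against $p$, and that map is anodyne by \Cref{pushout-product}; you have simply written out the adjunction bookkeeping the paper leaves implicit.
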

\begin{proof}
	This follows formally from \Cref{pushout-product}.
\end{proof}
\begin{corollary}\label{fibration-inherent}
	Let $i: L\to K$ be a monomorphism and $X$ a fibrant bisimplicial set. Then $i^*: \Fun(K,X)\to \Fun(L,X)$ is a fibration.
	
	In particular, if $X$ is fibrant then so is $\Fun(K,X)$ for all $K$.
	\end{corollary}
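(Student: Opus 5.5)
The plan is to specialize the preceding proposition. It says that for a monomorphism $i: A\to B$ and a fibration $p: X\to Y$, the map
\[
\Fun(B,X)\xrightarrow{(i^*,p_*)}\Fun(A,X)\times_{\Fun(A,Y)}\Fun(B,Y)
\]
is a fibration. We take $A=L$, $B=K$, the given monomorphism $i$, and $p: X\to *$. This $p$ is a fibration precisely because $X$ is fibrant.

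Next we identify the target. By the exponential law, $\Fun(A,*)_{m,n}=Hom_\bss(A\times\Delta^{m,n},*)$ is a single point in every bidegree. Hence $\Fun(L,*)=*$ and $\Fun(K,*)=*$. The fibre product $\Fun(L,X)\times_{*}*$ is then canonically isomorphic to $\Fun(L,X)$. Under this isomorphism the map $(i^*,p_*)$ becomes exactly $i^*:\Fun(K,X)\to\Fun(L,X)$. Since isomorphisms are fibrations and fibrations are stable under composition, $i^*$ is a fibration.

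For the second assertion, we apply the first with $L=\emptyset$, the empty bisimplicial set. The inclusion $\emptyset\to K$ is a monomorphism. Moreover $\Fun(\emptyset,X)_{m,n}=Hom_\bss(\emptyset,X)$ is a singleton, so $\Fun(\emptyset,X)=*$. Thus $\Fun(K,X)\to *$ is a fibration, which says that $\Fun(K,X)$ is fibrant.

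There is no serious obstacle; all the content sits in the preceding proposition, which in turn rests on \Cref{pushout-product}. The only point that needs care is the degenerate case $L=\emptyset$. Here we must check that the empty inclusion counts as a monomorphism, and that $\emptyset\times\Delta^{m,n}=\emptyset$, so that the mapping space out of $\emptyset$ is terminal.
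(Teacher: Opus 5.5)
Your proposal is correct and matches the paper, which simply says the corollary "follows immediately from the previous proposition." You specialize that proposition to $p\colon X\to *$, use that $\Fun(-,*)$ and $\Fun(\emptyset,X)$ are terminal, and take $L=\emptyset$ for the second claim; this is exactly the intended unpacking of that one-line proof.
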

	\begin{proof}
		This follows immediately from the previous proposition.
	\end{proof}
	
\subsection{Path and loop spaces}
Recall that $\Lambda^{1,1}=\Lambda^1_1\boxtimes\Delta^1\cup \Delta^1\boxtimes\Lambda^{1}_1\subset \Delta^{1,1}$. We now use the pair $(\Delta^{1,1}, \Lambda^{1,1})$ to define path and loop functors.

\begin{definition}[path space]
	Let $(X,*)$ be a pointed bisimplicial set. We define the (based) path space $PX$ of $X$ by the following pull-back diagram
	\[
	\begin{tikzcd}
		PX\ar[r]\ar[d] & \Fun(\Delta^{1,1},X)\ar[d,"i^*"]\\
		*\ar[r] & \Fun(\Lambda^{1,1},X)
	\end{tikzcd}
	\]
	where $i:\Lambda^{1,1}\hookrightarrow\Delta^{1,1}$ is the inclusion and $*$ stands for the constant map. More explicitly, $PX$ is the bisimplicial set with
	\[
	(PX)_{m,n}=\{\alpha: \Delta^{m,n}\times\Delta^{1,1}\to X\ \text{such that}\ \alpha|_{\Delta^{m,n}\times\Lambda^{1,1}}=*\}.
	\]
\end{definition}
Note that by \Cref{fibration-inherent} if $X$ is fibrant, then $PX$ is fibrant. Moreover, we have
\begin{lemma}
	Let $(X,*)$ be a fibrant pointed bisimplicial set. Then $PX$ is contractible.
\end{lemma}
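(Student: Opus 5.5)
The plan is to verify the definition of contractibility directly: we show that $PX\to *$ has the right lifting property against every monomorphism $i:A\hookrightarrow B$ of bisimplicial sets. By the exponential law and the explicit description of $PX$, a map $A\to PX$ is the same as a map $\alpha: A\times\Delta^{1,1}\to X$ with $\alpha|_{A\times\Lambda^{1,1}}=*$. A lift $B\to PX$ extending it is the same as a map $\beta: B\times\Delta^{1,1}\to X$ such that $\beta|_{A\times\Delta^{1,1}}=\alpha$ and $\beta|_{B\times\Lambda^{1,1}}=*$.

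First we glue the data. The constant map $*: B\times\Lambda^{1,1}\to X$ and $\alpha$ agree on the intersection $A\times\Lambda^{1,1}$, since $\alpha$ is constant there by hypothesis. They therefore define a single map
\[
A\times\Delta^{1,1}\cup_{A\times\Lambda^{1,1}} B\times\Lambda^{1,1}\longrightarrow X .
\]
Finding $\beta$ is then exactly the problem of extending this map along the inclusion
\[
A\times\Delta^{1,1}\cup B\times\Lambda^{1,1}\hookrightarrow B\times\Delta^{1,1}.
\]

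The key observation is that $\Lambda^{1,1}=\Lambda^1_1\boxtimes\Delta^1\cup\Delta^1\boxtimes\Lambda^1_1$ is precisely $\mathsf{L}^{1,1}_{\epsilon,\mu}$ with $\epsilon=\mu=1$. Hence the inclusion above is one of the generators in item (5) of the Gabriel--Zisman corollary, and in particular it is anodyne. One could also see this from \Cref{pushout-product}, once one knows that $\Lambda^{1,1}\hookrightarrow\Delta^{1,1}$ is anodyne, which holds by definition with $m=n=1$ and $k=l=1$.

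Since $X$ is fibrant, $X\to *$ has the right lifting property against anodyne extensions. So the glued map extends to some $\beta: B\times\Delta^{1,1}\to X$. By construction $\beta$ restricts to $\alpha$ on $A\times\Delta^{1,1}$ and to $*$ on $B\times\Lambda^{1,1}$, so it defines the required lift $B\to PX$. There is no real obstacle here. The only point needing care is checking that the basepoint conditions glue compatibly, which holds because $\alpha$ is constant on $A\times\Lambda^{1,1}$, and matching $\Lambda^{1,1}$ with $\mathsf{L}^{1,1}_{1,1}$.
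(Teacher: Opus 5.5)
Your proof is correct and is essentially the paper's argument. The paper shows that the restriction $i^*:\Fun(\Delta^{1,1},X)\to\Fun(\Lambda^{1,1},X)$ is a trivial fibration: via the exponential law, it turns the lifting problem into extension along $A\times\Delta^{1,1}\cup B\times\Lambda^{1,1}\hookrightarrow B\times\Delta^{1,1}$, which is anodyne by \Cref{pushout-product}, and then concludes for $PX$ by base change. You run the same reduction directly on $PX\to *$, with the map to $\Fun(\Lambda^{1,1},X)$ fixed to be constant, and your identification $\Lambda^{1,1}=\mathsf{L}^{1,1}_{1,1}$ is an equally valid way to see that this inclusion is anodyne.
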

\begin{proof}
	It suffices to show that $i^*: \Fun(\Delta^{1,1},X)\to \Fun(\Lambda^{1,1},X)$ is a trivial fibration. Let $j: A\to B$ be a monomorphism, then the lifting problem
	\[
	\begin{tikzcd}
		A\ar[r]\ar[d,"j"] & \Fun(\Delta^{1,1},X)\ar[d,"i^*"]\\
		B\ar[r]\ar[ur,dotted] & \Fun(\Lambda^{1,1},X)
	\end{tikzcd}
	\]
	is equivalent to the lifting problem
	\[
	\begin{tikzcd}
		A\times\Delta^{1,1}\cup B\times\Lambda^{1,1}\ar[r]\ar[d] & X\\
		B\times\Delta^{1,1}\ar[ur,dotted] & 
	\end{tikzcd}.
	\]
	The dotted map exists because the vertical map is anodyne by \Cref{pushout-product}.
\end{proof}

\begin{definition}[loop space]
	Let $(X,*)$ be a pointed bisimplicial set. We define the (based) loop space $\Omega X$ of $X$ by the pull-back diagram
	\[
	\begin{tikzcd}
		\Omega X\ar[r]\ar[d] & \Fun(\Delta^{1,1},X)\ar[d,"i^*"]\\
		*\ar[r] & \Fun(\delta_\A\Delta^{1,1},X)
	\end{tikzcd}
	\]
	where $i:\delta_\A\Delta^{1,1}\to \Delta^{1,1}$ is the inclusion.
\end{definition}

Note that if $X$ is fibrant, then $\Omega X$ is fibrant. Moreover there is a pull-back diagram
\[
	\begin{tikzcd}
		\Omega X\ar[r]\ar[d] & PX\ar[d,"\mathrm{ev}"]\\
		*\ar[r] & X
	\end{tikzcd}
\]
in which the map $\mathrm{ev}: PX\to X$ is the fibration induced by evaluation at $d_1\dbar_1\Delta^{1,1}$.
\section{Homotopy groups}

\subsection{Definition of homotopy}
\begin{definition}[homotopy]\label{definition-homotopy}
	A homotopy between maps $\alpha, \beta: K\to X$ is a map $h: K\times \Delta^{1,1}\to X$ such that $d_0 h=\overline{s}_0\alpha$, $\overline{d}_0 h= s_0\alpha$ and $d_1\overline{d}_1 h=\beta$. We then say $\alpha$ is homotopic to $\beta$ through $h$, and denote $\alpha\sim_h \beta$.
\end{definition}

\begin{remark}
	Such a homotopy $h$ yields a $\Delta^{1,0}$-homotopy between $\alpha$ and $\beta$ in the sense that $\dbar_1 h$ is a map $K\times\Delta^{1,0}\to X$ satisfying that $d_0 (\dbar_1 h)=\alpha$ and $d_1(\dbar_1 h)=\beta$. Similarly $h$ also yields a $\Delta^{0,1}$-homotopy between $\alpha$ and $\beta$.
\end{remark}

\begin{definition}[relative homotopy]
	Let $X$ be a bisimplicial set, with a fixed vertex $*: \Delta^{0,0}\to X$. Let $(K,L)$ be a bisimplicial pair (i.e. $L\subset K$). A map $\alpha: K\to X$ rel $L$ is a map $\alpha: K\to X$ such that $\alpha|_{L}=*$.
	
	Let $\alpha,\beta$ be maps $K\to X$ rel $L$. A homotopy between $\alpha$ and $\beta$ relative to $L$ is a homotopy $h: K\times \Delta^{1,1}\to X$ between $\alpha$ and $\beta$ such that $h|_{L\times\Delta^{1,1}}=*$.
\end{definition}

The definition of homotopy appears to be asymmetric at first glance, however we have
\begin{proposition}
	Let $X$ be a fibrant bisimplicial set. Then homotopy defines an equivalence relation on vertices of $X$.
\end{proposition}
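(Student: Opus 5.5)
The plan is to prove reflexivity directly and then deduce symmetry and transitivity from a single ``Euclidean'' property, proved by filling a two-directional horn. Recall that a homotopy $h:\Delta^{1,1}\to X$ from $a$ to $b$ is a $(1,1)$-bisimplex with $d_0h=\overline{s}_0a$, $\dbar_0 h=s_0a$ and $d_1\dbar_1h=b$. Index the vertices of $\Delta^{1,1}$ by pairs $(i,j)$. Then $h$ is a square whose column $i=1$ and row $j=1$ are the degenerate edges at $a$, and whose corner $(0,0)$ is $b$.

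\emph{Reflexivity.} Take $h=s_0\overline{s}_0a$. The simplicial identities give $d_0h=\overline{s}_0a$, $\dbar_0h=s_0a$ and $d_1\dbar_1h=a$.

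\emph{Euclidean property.} I will show: if $a\sim b$ and $a\sim c$, then $b\sim c$. Together with reflexivity this gives symmetry (take $c=a$) and transitivity (from $b\sim a$ and $b\sim c$, obtain $a\sim c$ after one use of symmetry). The argument works in $\Delta^{2,2}$, whose vertices are $(i,j)$ with $0\le i,j\le 2$.
\begin{enumerate}[(1)]
\item Make all faces through the row $j=2$ and the column $i=2$ degenerate at $a$.
\item Place the homotopy $h_1:a\sim b$ on the square $\{0,2\}\times\{0,2\}$ and the homotopy $h_2:a\sim c$ on the square $\{1,2\}\times\{1,2\}$.
\item Extend these to data on $\Lambda^2_k\boxtimes\Delta^2\cup\Delta^2\boxtimes\Lambda^2_l$ for a suitable choice of $k,l$, using iterated degeneracies of $h_1$ and $h_2$.
\item Fill to a $(2,2)$-bisimplex $z$. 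This is possible because $X$ is fibrant and $\Lambda^2_k\boxtimes\Delta^2\cup\Delta^2\boxtimes\Lambda^2_l\to\Delta^{2,2}$ is a generating anodyne extension.
\item Read off the missing face $d_k\dbar_l z$, or a suitable face of it, as a homotopy between $b$ and $c$.
\end{enumerate}
Which $(k,l)$ to use and which vertices receive $b$ and $c$ is to be tuned so that the extracted square has exactly the required shape. That is, its column $i=1$ and row $j=1$ (in the local coordinates of the face) should be degenerate at one endpoint, and its corner $(0,0)$ should be the other endpoint. My first attempt is $(k,l)=(0,0)$ with $c$ placed at $(1,1)$, and then to check which face comes out.

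If the direct horn bookkeeping becomes unwieldy, a cleaner alternative is to pass to mapping spaces. By Corollary \ref{fibration-inherent} and the proof that $PX$ is contractible, the restriction $\Fun(\Delta^{1,1},X)\to\Fun(\Lambda^{1,1},X)$ is a trivial fibration. A homotopy from $a$ to $b$ is then a lift of the constant map at $a$ on $\Lambda^{1,1}$, and comparing two such lifts by lifting against $\delta\Delta^{1,1}\to\Delta^{1,1}$ (after a further pushout-product with $\Delta^{1,1}$, as in \Cref{pushout-product}) should produce the desired homotopy. One can also use the variants in \Cref{example-1} and \Cref{example-2}, whose inclusions are anodyne, to get horns with two missing faces in each direction. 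These give more freedom in prescribing degenerate boundary data.

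The main obstacle is step (3): making the prescribed faces on the horn mutually compatible along their intersections. This requires that all edges through the ``source'' corner be degenerate at the correct vertex, and that the faces coming from $h_1$ and $h_2$ agree where they meet. At the same time, the output face must satisfy all three conditions of Definition \ref{definition-homotopy} and not merely give a $\Delta^{1,0}$- or $\Delta^{0,1}$-homotopy. I expect this to need a careful choice of horn, possibly one of the anodyne inclusions from \Cref{example-1}, rather than any new idea.
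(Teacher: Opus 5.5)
Your architecture matches the paper's: reflexivity via $s_0\overline{s}_0a$, then the Euclidean property proved in $\Delta^{2,2}$, with $h_1$ on $\{0,2\}\times\{0,2\}=d_1\dbar_1\Delta^{2,2}$ and $h_2$ on $\{1,2\}\times\{1,2\}=d_0\dbar_0\Delta^{2,2}$. However, you leave the decisive step open, and the mechanism in steps (3)--(5) fails for every $(k,l)$. For a standard horn $\Lambda^2_k\boxtimes\Delta^2\cup\Delta^2\boxtimes\Lambda^2_l$, the only $(1,1)$-face of $\Delta^{2,2}$ not contained in the horn is $d_k\dbar_l\Delta^{2,2}=\{i_0,i_1\}\times\{j_0,j_1\}$, where $\{i_0<i_1\}=[2]\setminus\{k\}$ and $\{j_0<j_1\}=[2]\setminus\{l\}$. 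So the output must be $d_k\dbar_l z$. Any homotopy between $b$ and $c$ has its bottom horizontal edge joining $b$ to $c$. Here that edge is $\{i_0,i_1\}\times\{j_0\}$, which lies in $\dbar_{j_1}\Delta^{2,2}$ with $j_1\neq l$, i.e.\ in a face you must prescribe beforehand. In general no iterated degeneracy of $h_1$ or of $h_2$ contains an edge from $b$ to $c$, so the horn data cannot be assembled as you describe. A ``suitable face'' of $d_k\dbar_l z$ does not help, since it is only an edge or a vertex. Your first attempt $(k,l)=(0,0)$ fails visibly: the free square is then $d_0\dbar_0 z$, which is exactly where you placed $h_2$, and three of its vertices lie in row or column $2$, hence at $a$.

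The missing idea is to prescribe \emph{less} and use a non-generating anodyne inclusion. Set $d_0 z=\overline{s}_0h_2$, $\dbar_0 z=s_0h_2$ and $d_1\dbar_1 z=h_1$. This is a map on $\Gamma^{2,2}\cup d_1\dbar_1\Delta^{2,2}=d_0\Delta^{2,2}\cup\dbar_0\Delta^{2,2}\cup d_1\dbar_1\Delta^{2,2}$, whose inclusion into $\Delta^{2,2}$ is anodyne by \Cref{example-1} with $m=n=1$. The $b$--$c$ edges are manufactured by the intermediate fillings of $d_1z$ and $\dbar_1z$ that this example encodes. Then $d_0(d_2\dbar_2z)=d_1\dbar_2\overline{s}_0h_2=\overline{s}_0c$, $\dbar_0(d_2\dbar_2z)=s_0c$ and $d_1\dbar_1(d_2\dbar_2z)=d_1\dbar_1h_1=b$. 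So $c\sim b$, which is the Euclidean property since $b$ and $c$ play symmetric roles; this is the paper's proof. Your mapping-space fallback can also be repaired, but not by lifting against $\delta\Delta^{1,1}$, which needs edges between $h_1$ and $h_2$ that you do not have. Instead, regard $h_1,h_2$ as vertices of the contractible $P_aX$. The map $\delta_\A\Delta^{1,1}\to P_aX$ with $d_0$-face $\overline{s}_0h_2$, $\dbar_0$-face $s_0h_2$ and corner $h_1$ extends to $H:\Delta^{1,1}\to P_aX$. No further pushout-product is needed, since trivial fibrations lift against all monomorphisms. Then $\mathrm{ev}\circ H$ is a homotopy $c\sim b$.
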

\begin{proof}
	Let $x$ be a vertex of $X$, then $s_0\overline{s}_0 x$ defines a homotopy $x\sim x$, thus proving reflexivity. Now suppose $x,x',x''$ are vertices of $X$ and given $x\sim_\alpha x'$ and $x\sim_\beta x''$, we show that $x'\sim x''$. Define a map
	\[
	\theta: \Gamma^{2,2}\cup d_1\dbar_1\Delta^{2,2}\to X
	\]
	by $d_0\theta=\overline{s}_0\alpha$, $\dbar_0\theta=s_0\alpha$ and $d_1\dbar_1\theta=\beta$. Note that $\theta$ is well-defined, for example $$x=d_0\dbar_0\beta=d_0\dbar_0(d_1\dbar_1\theta)=d_0\dbar_0\beta=d_0\dbar_0(d_0\dbar_0\theta)=d_0\dbar_0\alpha=x.$$
	Therefore (by \Cref{example-1}) $\theta$ extends to a map, $\Delta^{2,2}\to X$, still denoted by $\theta$. Now we claim that $d_2\dbar_2\theta$ defines a homotopy $x'\sim x''$. Indeed, we compute
	\[
		d_0(d_2\dbar_2\theta)=d_1\dbar_2 d_0\theta=d_1\dbar_2 \overline{s}_0\alpha=d_1\overline{s}_0\dbar_1\alpha=\overline{s}_0 x',
	\]
	and similarly $\dbar_0(d_2\dbar_2\theta)=s_0 x'$. Moreover we have that
	\[
	d_1\dbar_1(d_2\dbar_2\theta) =d_1\dbar_1(d_1\dbar_1\theta)=d_1\dbar_1\beta=x''.
	\]
	This completes the proof.
\end{proof}

\begin{corollary}
	Let $X$ be a fibrant bisimplicial set. Then homotopy defines an equivalence relation on maps $K\to X$.
	
	If  moreover $X$ is pointed, then relative homotopy defines an equivalence relation on maps $K\to X$ rel $L$.
\end{corollary}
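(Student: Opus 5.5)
The plan is to reduce both statements to the preceding proposition by passing to mapping spaces. By the exponential law, a map $\alpha: K\to X$ is the same as a vertex of $\Fun(K,X)$. A homotopy $h: K\times\Delta^{1,1}\to X$ is the same as a $(1,1)$-simplex $\tilde h$ of $\Fun(K,X)$.

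The conditions $d_0h=\overline{s}_0\alpha$, $\dbar_0h=s_0\alpha$ and $d_1\dbar_1 h=\beta$ translate into $d_0\tilde h=\overline{s}_0\tilde\alpha$, $\dbar_0\tilde h=s_0\tilde\alpha$ and $d_1\dbar_1\tilde h=\tilde\beta$. This works because the face and degeneracy maps of $\Fun(K,X)$ are induced by those of $\Delta^{1,1}$, compatibly with the identification $\Fun(K,X)_{m,n}=Hom(K\times\Delta^{m,n},X)$. Hence homotopy of maps $K\to X$ is exactly homotopy of vertices of $\Fun(K,X)$. Since $X$ is fibrant, $\Fun(K,X)$ is fibrant by \Cref{fibration-inherent}, and the previous proposition gives that homotopy is an equivalence relation. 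One detail: the previous proposition proves reflexivity together with a Euclidean-type property ($x\sim x'$ and $x\sim x''$ imply $x'\sim x''$). Taking $x''=x$ yields symmetry, and symmetry combined with the Euclidean property yields transitivity.

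For the relative case, I would work in the fiber $F$ of the restriction $i^*:\Fun(K,X)\to\Fun(L,X)$ over the constant map $*$. That is, $F$ is the pullback of $i^*$ along $*:\Delta^{0,0}\to\Fun(L,X)$. Its $(m,n)$-simplices are the maps $K\times\Delta^{m,n}\to X$ that restrict to $*$ on $L\times\Delta^{m,n}$.

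Vertices of $F$ are the maps $K\to X$ rel $L$. Its $(1,1)$-simplices satisfying the homotopy identities are exactly the relative homotopies, since the relevant faces and degeneracies are computed in $\Fun(K,X)$ and $F$ is a sub-bisimplicial set. By \Cref{fibration-inherent}, $i^*$ is a fibration, because $L\to K$ is a monomorphism and $X$ is fibrant. Fibrations are stable under base change, so $F\to *$ is a fibration, i.e.\ $F$ is fibrant. Applying the previous proposition to $F$ then finishes the proof.

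The only step needing care is checking that the simplicial identities used in the proposition transfer correctly through the exponential law. This comes down to the naturality of the bijection $Hom(\Delta^{m,n},\Fun(K,X))\cong Hom(K\times\Delta^{m,n},X)$ in $\Delta^{m,n}$, which is routine.
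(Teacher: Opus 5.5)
Your proposal is correct and matches the paper's proof. The paper applies the preceding proposition to $\Fun(K,X)$ and to $\Fun(K,X)\times_{\Fun(L,X)}\Fun(L,*)$, which is exactly your fiber $F$ because $\Fun(L,*)\cong *$. Your extra remarks make explicit what the paper leaves implicit: fibrancy of $F$ follows from \Cref{fibration-inherent} plus base change, and symmetry and transitivity follow from reflexivity together with the Euclidean-type property that the proposition actually proves.
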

\begin{proof}
	Apply the previous proposition to the fibrant bisimplicial sets $\Fun(K,X)$ and $\Fun(K,X)\times_{\Fun(L,X)}\Fun(L,*)$.
\end{proof}

There are clearly other variants of the definition of homotopy. For example, following the notations of \Cref{definition-homotopy}, we define an \textit{reversed homotopy} between maps $\alpha,\beta$ to be a map $g: K\times \Delta^{1,1}\to X$ such that $d_0\dbar_0 g=\alpha$, $d_1 g=\overline{s}_0\beta$ and $\dbar_1 g= s_0\beta$. We then say $\alpha$ is reversed homotopic to $\beta$ through $g$. With these understood, we have
\begin{lemma}
	If $X$ is fibrant then $\alpha$ is homotopic to $\beta$ if and only if $\alpha$ is reversed homotopic to $\beta$.
\end{lemma}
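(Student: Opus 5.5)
We first reduce to vertices, as in the preceding corollary. Apply the statement to the fibrant bisimplicial set $\Fun(K,X)$ (fibrant by \Cref{fibration-inherent}), or to $\Fun(K,X)\times_{\Fun(L,X)}\Fun(L,*)$ in the relative case. This identifies maps $K\to X$ with vertices, and both homotopies and reversed homotopies with $(1,1)$-simplices satisfying the corresponding face conditions. So it suffices to treat vertices $x,y$ of a fibrant $X$. The plan mirrors the proof that homotopy is an equivalence relation: build a partial map on a subcomplex of $\Delta^{2,2}$ whose inclusion is anodyne by \Cref{example-1}, fill it, and read off the desired homotopy as a face.

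Suppose first that $x\sim_h y$, so $d_0h=\overline{s}_0x$, $\dbar_0h=s_0x$ and $d_1\dbar_1h=y$. We seek $g$ with $d_0\dbar_0 g=x$, $d_1g=\overline{s}_0y$ and $\dbar_1 g=s_0y$. We define $\theta$ on $\Gamma^{2,2}\cup d_2\dbar_2\Delta^{2,2}$ (anodyne by the second half of \Cref{example-1}) by
\[
d_0\theta=\overline{s}_0 h,\qquad \dbar_0\theta=s_0 h,\qquad d_2\dbar_2\theta=s_0\overline{s}_0 x.
\]
We fill to $\theta:\Delta^{2,2}\to X$ and take $g=d_1\dbar_1\theta$. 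The face conditions should then follow from the simplicial identities. For example,
\[
d_1 g=d_1\dbar_1 d_1\theta \quad\text{and}\quad d_0\dbar_0 g=d_0\dbar_0 d_2\dbar_2\theta=x.
\]
If the face of $\theta$ needed for $d_1 g$ is not determined by this choice of data, we switch the roles of the indices. That is, we put $h$-data on the faces $d_1,\dbar_1$ and use the $\Gamma^{2,2}\cup d_1\dbar_1\Delta^{2,2}$ version, so that the unknown diagonal face produces $g$.

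For the converse we run the symmetric construction. Starting from a reversed homotopy $g$, we place $\overline{s}_1 g$ and $s_1 g$ (or degeneracies of $g$) on faces adjacent to the diagonal, and a fully degenerate simplex on another diagonal face. We fill using \Cref{example-1} and extract $h$ as the remaining diagonal face $d_i\dbar_i\theta$.

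The main obstacle is the bookkeeping. We must choose which faces of $\Delta^{2,2}$ carry which degeneracies of $h$ (respectively $g$) so that three things hold at once. First, the prescribed faces agree on their intersections, so $\theta$ is well-defined on the horn-type subcomplex. Second, the missing faces are exactly those allowed by \Cref{example-1}, or by \Cref{example-2} in a shifted form. Third, the extracted diagonal face has the right faces. The last point in particular requires $d_1$ and $\dbar_1$ of the output to be degenerate on $y$, rather than merely to have $y$ as a vertex. If no single choice works, a fallback is to compose two steps. We use the established symmetry and transitivity of $\sim$ (the previous proposition) to first produce an auxiliary homotopy $y\sim x$, and then convert it with a single filler whose known faces are all degenerate except one.
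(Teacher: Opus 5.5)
\textbf{There is a genuine gap.} Your reduction to vertices $x,y$ of a fibrant $X$ via $\Fun(K,X)$ is fine. However, the filler you actually propose does not exist, and the alternatives are left unspecified, so the heart of the argument is missing.

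\emph{Why your construction fails.} On $\Gamma^{2,2}\cup d_2\dbar_2\Delta^{2,2}=d_0\Delta^{2,2}\cup\dbar_0\Delta^{2,2}\cup d_2\dbar_2\Delta^{2,2}$ your data are incompatible on the overlap $d_0d_2\dbar_2\Delta^{2,2}$. There $d_0(d_2\dbar_2\theta)$ would have to equal $d_1\dbar_2 d_0\theta=d_1\dbar_2\overline{s}_0h=\overline{s}_0d_1\dbar_1h=\overline{s}_0y$, whereas $d_0(s_0\overline{s}_0x)=\overline{s}_0x$. So $\theta$ is not even defined unless $x=y$.

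Even ignoring this, for $g=d_1\dbar_1\theta$ the faces $d_1g$ and $\dbar_1g$ are the restrictions of $\theta$ to the faces $\{0\}\boxtimes\{0,2\}$ and $\{0,2\}\boxtimes\{0\}$ of $\Delta^2\boxtimes\Delta^2$. These lie outside the domain, so the filler gives you no control over them. Also, $d_0\dbar_0g$ is the vertex $(2,2)$ of $\theta$, not $d_0\dbar_0d_2\dbar_2\theta$, which is the vertex $(1,1)$.

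The switched version on $\Gamma^{2,2}\cup d_1\dbar_1\Delta^{2,2}$ with output $d_2\dbar_2\theta$ fails in the same way: $d_1(d_2\dbar_2\theta)$ is the restriction to $\{0\}\boxtimes\{0,1\}$, again outside the domain. The underlying issue is that the conditions $d_1g=\overline{s}_0y$ and $\dbar_1g=s_0y$ sit at the initial vertex. The $\Gamma$-type horns of \Cref{example-1} only prescribe the faces $d_0,\dbar_0$, which miss that vertex. The symmetry/transitivity fallback does not remove the difficulty. Turning a homotopy $y\sim x$ into a reversed homotopy is exactly the same kind of conversion, and you do not specify the filler.

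\emph{The missing idea.} This is what the paper does: prescribe all four faces $d_0,d_1,\dbar_0,\dbar_1$ by degeneracies of $h$, so the domain is the generating horn $\Lambda^{2,2}=\Lambda^2_2\boxtimes\Delta^2\cup\Delta^2\boxtimes\Lambda^2_2$. Set $d_0\theta=\overline{s}_1h$, $d_1\theta=\overline{s}_0h$, $\dbar_0\theta=s_1h$, $\dbar_1\theta=s_0h$. These agree on overlaps; for example $d_0d_1\theta=\overline{s}_0\overline{s}_0x=d_0d_0\theta$ and $\dbar_1d_1\theta=h=d_1\dbar_1\theta$. Fill, and take $g=d_2\dbar_2\theta$. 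Then
$d_1g=\dbar_2d_1d_1\theta=\dbar_2\overline{s}_0d_1h=\overline{s}_0y$,
$\dbar_1g=d_2\dbar_1\dbar_1\theta=d_2s_0\dbar_1h=s_0y$, and
$d_0\dbar_0g=d_1\dbar_1d_0\dbar_0\theta=d_1\dbar_1s_0d_0h=\dbar_1\overline{s}_0x=x$.

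Your remark about putting $h$-data on the faces $d_1,\dbar_1$ points in this direction, but it contradicts the domain $\Gamma^{2,2}\cup d_1\dbar_1\Delta^{2,2}$ that you name. The converse is the mirror image. Fill $\Lambda^2_0\boxtimes\Delta^2\cup\Delta^2\boxtimes\Lambda^2_0$ with $d_1\theta=\overline{s}_1g$, $d_2\theta=\overline{s}_0g$, $\dbar_1\theta=s_1g$, $\dbar_2\theta=s_0g$, and take $h=d_0\dbar_0\theta$.
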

\begin{proof}
	We prove one implication, the other is similar. Suppose that $\alpha$ is homotopic to $\beta$ through $h$, then we can define a map
	\[
	\theta: \Lambda^2_2\boxtimes\Delta^2\cup \Delta^2\boxtimes\Lambda^2_2\to X
	\]
	by $d_0\theta=\overline{s}_1 h$, $\dbar_0\theta= s_1 h$, $d_1\theta=\overline{s}_0 h$ and $\dbar_1\theta=s_0 h$. Now $\theta$ is well-defined and extends to a map $\theta: \Delta^{2,2}\to X$. One readily checks that $d_2\dbar_2\theta$ gives a reversed homotopy between $\alpha$ and $\beta$.
\end{proof}

\subsection{Homotopy sets}
\begin{definition}[Bott-Chern and Aeppli homotopy]
	The Bott-Chern homotopy set of a pointed fibrant bisimplicial set $(X,*)$ in degree $(m,n)$, denoted by $\pi_{m,n}^\BC(X,*)$, is the set of relative homotopy classes of maps
	\[
	\Delta^{m,n}\to X \ \text{rel }\delta\Delta^{m,n}.
	\]
	
	The Aeppli homotopy set in degree $(m,n)$, denoted by $\pi_{m,n}^\A(X,*)$, is the set of relative homotopy classes of maps
	\[
	\Delta^{m,n}\to X \ \text{rel }\delta_\A\Delta^{m,n}.
	\]
	
	For simplicity, we may sometimes suppress the base point and write $\pi_{m,n}^\BC(X)$, $\pi_{m,n}^\A(X)$ for $\pi_{m,n}^\BC(X,*)$, $\pi_{m,n}^\A(X,*)$ respectively.
\end{definition}

It is clear that a map between pointed fibrant bisimplicial sets induces, by composition, a map between Bott-Chern and Aeppli homotopy sets. Moreover, homotopic maps that preserve basepoint induce the same map on Bott-Chern and Aeppli homotopy sets.

\begin{lemma}\label{character-BC}
	A map $\alpha: \Delta^{m,n}\to X$ rel $\delta\Delta^{m,n}$ is homotopic to the constant map $*$ rel $\delta\Delta^{m,n}$ if and only if there exists a map $\theta: \Delta^{m+1,n+1}\to X$ such that
	$d_i\theta=*$, $0\le i\le m$; $\dbar_j\theta=*$, $0\le j\le n$ and $d_{m+1}\dbar_{n+1}\theta=\alpha$.
\end{lemma}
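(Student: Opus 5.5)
One direction extracts the required bisimplex from a homotopy by a single two-directional filler. The other direction reads a homotopy off the given $\theta$ after reversing it.

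\emph{($\Leftarrow$) Reading off a homotopy.} Given $\theta$ as in the statement, the plan is to show that $\theta$ itself is a reversed homotopy from $*$ to $\alpha$ rel $\delta\Delta^{m,n}$, and then to apply the lemma on reversed homotopies. The preceding Corollary makes that lemma available for the fibrant bisimplicial set $\Fun(\Delta^{m,n},X)\times_{\Fun(\delta\Delta^{m,n},X)}\Fun(\delta\Delta^{m,n},*)$, so it converts reversed homotopy into homotopy. Since homotopy is an equivalence relation, this gives $\alpha\sim *$.

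This direction has one subtlety. A homotopy is a map $\Delta^{m,n}\times\Delta^{1,1}\to X$, whereas $\theta$ lives on $\Delta^{m+1,n+1}$. There is a canonical map $\Delta^{m,n}\times\Delta^{1,1}=(\Delta^m\times\Delta^1)\boxtimes(\Delta^n\times\Delta^1)\to\Delta^{m+1}\boxtimes\Delta^{n+1}$, namely the collapse $\Delta^m\times\Delta^1\to\Delta^{m+1}$ given by $(i,0)\mapsto i$ and $(i,1)\mapsto m+1$, applied in each direction. I will precompose $\theta$ with this collapse, yielding $h$.

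I will then check the boundary conditions on $h$. The face $\Delta^{m,n}\times\{(0,0)\}$ maps to $\alpha$, using $d_{m+1}\dbar_{n+1}\theta=\alpha$. The faces $\Delta^{m,n}\times(1\boxtimes\Delta^1)$ and $\Delta^{m,n}\times(\Delta^1\boxtimes 1)$ land in faces of $\theta$ containing the last vertex, which are degenerate images of $*$. The subspace $\delta\Delta^{m,n}\times\Delta^{1,1}$ lands in $\bigcup_{i\le m}d_i\cup\bigcup_{j\le n}\dbar_j$, where $\theta=*$. The endpoint conventions may need swapping, in which case I would use the ordinary homotopy version directly.

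\emph{($\Rightarrow$) Building $\theta$ by a filler.} Suppose $h:\Delta^{m,n}\times\Delta^{1,1}\to X$ is a homotopy rel $\delta\Delta^{m,n}$ between $\alpha$ and $*$ (or between $*$ and $\alpha$ by symmetry). I will build $\theta$ by prescribing it on a subcomplex and filling. The prescription is
\begin{itemize}
\item $d_i\theta=*$ for $i\le m$,
\item $\dbar_j\theta=*$ for $j\le n$,
\item $d_{m+1}\dbar_{n+1}\theta=\alpha$.
\end{itemize}
Together these cover $\Lambda^{m+1}_{m+1}\boxtimes\Delta^{n+1}\cup\Delta^{m+1}\boxtimes\Lambda^{n+1}_{n+1}\cup d_{m+1}\dbar_{n+1}\Delta^{m+1,n+1}$.

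This data is compatible because $\alpha$ is trivial on $\delta\Delta^{m,n}$. However, $\Lambda^{m+1,n+1}\cup d_{m+1}\dbar_{n+1}\Delta$ is the Aeppli-type boundary, and its inclusion is not known to be anodyne. Indeed, $\Lambda^{m+1,n+1}\hookrightarrow\Delta^{m+1,n+1}$ is anodyne, and adding the extra face constrains the filler. So a naive fill fails, and $h$ must be used.

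The plan is to pass through $\Delta^{m,n}\times\Delta^{1,1}$. One can triangulate $(\Delta^m\times\Delta^1)\boxtimes(\Delta^n\times\Delta^1)$ and find $\Delta^{m+1}\boxtimes\Delta^{n+1}$ as the nondegenerate bisimplex $\sigma$ built from the shuffle $(0,0),\dots,(m,0),(m,1)$ in each direction. I would use $\theta=h\circ\sigma$, which is essentially a restriction of $h$. Its last faces $d_{m+1}\dbar_{n+1}$ give $h|_{\Delta^{m,n}\times(0,0)}$. Its faces $d_i$ for $i<m$ lie over $\partial\Delta^m$, hence are trivial since $h$ is rel $\delta\Delta^{m,n}$.

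\emph{Expected obstacle.} The face $d_m\theta$ is the problem, since it lies over $\Delta^{m-1}\times\Delta^1$, which meets the interior. To make $d_m\theta$ trivial, I would first use the fibrancy of the relative mapping space, via the lemma on reversed homotopy and an anodyne replacement as in \Cref{example-1}, to replace $h$ by a homotopy constant near the end $*$. Then that face also collapses to $*$. Arranging this normalization cleanly is the main obstacle; everything else is face bookkeeping.
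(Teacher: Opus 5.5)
Your $(\Leftarrow)$ direction is correct and more elementary than the paper's. The paper gets the homotopy by filling along the anodyne map $\Lambda^{m+1,n+1}\times\Delta^{1,1}\cup\Delta^{m+1,n+1}\times\delta_\A\Delta^{1,1}\hookrightarrow\Delta^{m+1,n+1}\times\Delta^{1,1}$. Your face checks show that $h=\theta\circ(c\boxtimes c)$ satisfies $\hat{d}_0h=*$, $\hat{\dbar}_0h=*$, $\hat{d}_1\hat{\dbar}_1h=\alpha$ and $h|_{\delta\Delta^{m,n}\times\Delta^{1,1}}=*$. So $h$ is already an ordinary homotopy $*\sim\alpha$ rel $\delta\Delta^{m,n}$, and no fibrancy is needed. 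Drop the reversed-homotopy framing, though. The face $\hat{d}_1h=(d_{m+1}\theta)\circ(1\boxtimes c)$ is not degenerate in general, so $h$ is not a reversed homotopy.

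The $(\Rightarrow)$ direction has a genuine gap. Take $\theta=h\circ(\sigma\boxtimes\sigma)$ with $m\ge 1$. The face $d_m\sigma$ is the simplex $(0,0),\dots,(m-1,0),(m,1)$ of the prism. It projects onto all of $\Delta^m$ and meets the constant end $\Delta^m\times\{1\}$ in a single vertex. So no normalization of $h$ near that end makes $d_m\theta$ (or $\dbar_n\theta$) trivial. Another choice of sub-bisimplex will not help either: $\sigma$ is the only nondegenerate shuffle with $d_{m+1}\sigma=\Delta^m\times\{0\}$. In fact $c\circ\sigma=\mathrm{id}_{\Delta^{m+1}}$. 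Hence a homotopy $h$ for which $h\circ(\sigma\boxtimes\sigma)$ has the required faces exists if and only if the desired $\theta$ exists. The ``normalization'' you defer is the entire content of this direction, not bookkeeping.

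The missing idea is to fill rather than restrict, transporting the trivial solution along the homotopy.
\begin{itemize}
\item Take $h$ with $*\sim_h\alpha$ rel $\delta\Delta^{m,n}$. Then $h=*$ on $\Delta^{m,n}\times\Lambda^{1,1}\cup\delta\Delta^{m,n}\times\Delta^{1,1}$.
\item Define $\omega$ on $\delta_\A\Delta^{m+1,n+1}\times\Delta^{1,1}\cup\Delta^{m+1,n+1}\times\Lambda^{1,1}$. Set it to $*$ on $\Lambda^{m+1,n+1}\times\Delta^{1,1}$ and on $\Delta^{m+1,n+1}\times\Lambda^{1,1}$, and to $h$ on $d_{m+1}\dbar_{n+1}\Delta^{m+1,n+1}\times\Delta^{1,1}$.
\item This is consistent because $h$ is $*$ on $\Delta^{m,n}\times\Lambda^{1,1}$, and $d_{m+1}\dbar_{n+1}\Delta^{m+1,n+1}\cap\Lambda^{m+1,n+1}$ is $\delta\Delta^{m,n}$.
\item You are right that $\delta_\A\Delta^{m+1,n+1}\hookrightarrow\Delta^{m+1,n+1}$ is not anodyne. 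But its pushout-product with the generating anodyne extension $\Lambda^{1,1}\hookrightarrow\Delta^{1,1}$ is anodyne, by \Cref{pushout-product}. So $\omega$ extends over $\Delta^{m+1,n+1}\times\Delta^{1,1}$.
\item Set $\theta=\hat{d}_1\hat{\dbar}_1\omega$. Then $d_i\theta=*$ for $i\le m$, $\dbar_j\theta=*$ for $j\le n$, and $d_{m+1}\dbar_{n+1}\theta=\hat{d}_1\hat{\dbar}_1h=\alpha$.
\end{itemize}
This is the paper's argument for this direction.
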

 Recall the notations $\Lambda^{m,n}=\Lambda^{m}_{m}\boxtimes\Delta^{n}\cup\Delta^{m}\boxtimes\Lambda^{n}_{n}$ and $\delta_\A\Delta^{m,n}=\Lambda^{m,n}\cup d_m\dbar_n\Delta^{m,n}$.

\begin{proof}
	Suppose that $*\sim_h \alpha$ rel $\delta\Delta^{m,n}$ then we define a map
	\[
	\omega: \delta_\A\Delta^{m+1,n+1}\times\Delta^{1,1}\cup \Delta^{m+1,n+1}\times \Lambda^{1,1}\to X
	\]
	by $\omega=*$ on $\Lambda^{m+1,n+1}\times\Delta^{1,1}$; $\omega=*$ on $\Delta^{m+1,n+1}\times \Lambda^{1,1}$ and $\omega=h$ on $d_{m+1}\dbar_{n+1}\Delta^{m+1,n+1}\times \Delta^{1,1}$. Then one checks that $\omega$ is well-defined and thus (by \Cref{pushout-product}) extends to a map $\omega:\Delta^{m+1,n+1}\times\Delta^{1,1}\to X$. Let $\theta$ be the restriction of $\omega$ onto $\Delta^{m+1,n+1}\times d_1\dbar_1\Delta^{1,1}$, then $\theta$ is a required map.
	
	Conversely, given such a map $\theta$, we define a map
	\[
	\omega: \Lambda^{m+1,n+1}\times\Delta^{1,1}\cup \Delta^{m+1,n+1}\times \delta_\A\Delta^{1,1}\to X
	\]
	by $\omega=*$ on $\Lambda^{m+1,n+1}\times\Delta^{1,1}$; $\omega=*$ on $\Delta^{m+1,n+1}\times \Lambda^{1,1}$ and $\omega=\theta$ on $\Delta^{m+1,n+1}\times d_1\dbar_1\Delta^{1,1}$. Then $\omega$ extends to a map $\omega:\Delta^{m+1,n+1}\times\Delta^{1,1}\to X$ whose restriction onto $d_{m+1}\dbar_{n+1}\Delta^{m+1,n+1}\times\Delta^{1,1}$ gives a homotopy $*\sim \alpha$ rel $\delta\Delta^{m,n}$.
\end{proof}

\begin{lemma}\label{character-A}
	A map $\alpha: \Delta^{m,n}\to X$ rel $\delta_\A\Delta^{m,n}$ is homotopic to the constant map $*$ rel $\delta_\A\Delta^{m,n}$ if and only if there exists a map $\theta: \Delta^{m+1,n+1}\to X$ such that
	$d_i\theta=*$, $0\le i< m$; $\dbar_j\theta=*$, $0\le j< n$; $d_m\dbar_n\theta=*$, and $d_{m+1}\dbar_{n+1}\theta=\alpha$.
\end{lemma}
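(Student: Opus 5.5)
The plan is to imitate the proof of \Cref{character-BC}, replacing the Bott-Chern boundary $\delta\Delta^{m,n}$ by the Aeppli boundary $\delta_\A\Delta^{m,n}$. The new feature is the extra face $d_m\dbar_n\theta=*$. We need one inclusion $C\hookrightarrow\Delta^{m+1,n+1}$ that will play the role of $\delta_\A\Delta^{m+1,n+1}$ in the previous proof. The natural choice is
\[
C=\Lambda^{m+1}_{m,m+1}\boxtimes\Delta^{n+1}\cup\Delta^{m+1}\boxtimes\Lambda^{n+1}_{n,n+1}\cup d_m\dbar_n\Delta^{m+1,n+1}\cup d_{m+1}\dbar_{n+1}\Delta^{m+1,n+1},
\]
that is, $\Gamma^{m+1,n+1}$ together with both diagonal faces $d_m\dbar_n$ and $d_{m+1}\dbar_{n+1}$. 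In the statement, $\theta$ is prescribed on $\Gamma^{m+1,n+1}\cup d_m\dbar_n\Delta^{m+1,n+1}$ (it is $*$ there) and equals $\alpha$ on $d_{m+1}\dbar_{n+1}\Delta^{m+1,n+1}$.

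\emph{Forward direction.} Suppose $*\sim_h\alpha$ rel $\delta_\A\Delta^{m,n}$. Define
\[
\omega: C\times\Delta^{1,1}\cup\Delta^{m+1,n+1}\times\Lambda^{1,1}\to X
\]
by $\omega=*$ on $(\Gamma^{m+1,n+1}\cup d_m\dbar_n\Delta^{m+1,n+1})\times\Delta^{1,1}$ and on $\Delta^{m+1,n+1}\times\Lambda^{1,1}$, and $\omega=h$ on $d_{m+1}\dbar_{n+1}\Delta^{m+1,n+1}\times\Delta^{1,1}$.

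Well-definedness comes down to checking that the intersection of the face $d_{m+1}\dbar_{n+1}\Delta^{m+1,n+1}\cong\Delta^{m,n}$ with the rest of $C$ lies in $\delta_\A\Delta^{m,n}$. The faces $d_i$ ($i<m$) and $\dbar_j$ ($j<n$) restrict to $d_i,\dbar_j$ of $\Delta^{m,n}$, which lie in $\Lambda^{m,n}$. The face $d_m\dbar_n$ meets $d_{m+1}\dbar_{n+1}$ in $d_m\dbar_n d_{m+1}\dbar_{n+1}=d_m\dbar_n$ of $\Delta^{m,n}$ (via simplicial identities), and this also lies in $\delta_\A\Delta^{m,n}$. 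Since $h$ is $*$ on $\delta_\A\Delta^{m,n}\times\Delta^{1,1}$, and $h$ restricted to $\Lambda^{1,1}$ is $*$ (because $d_0h=\overline{s}_0*$ and $\dbar_0h=s_0*$), the pieces glue.

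By \Cref{pushout-product} applied with the anodyne map $\Lambda^{1,1}\hookrightarrow\Delta^{1,1}$, $\omega$ extends to $\Delta^{m+1,n+1}\times\Delta^{1,1}$. Its restriction to $d_1\dbar_1\Delta^{1,1}$ is then the required $\theta$.

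\emph{Converse.} Given $\theta$, we want to extend over $\Delta^{m+1,n+1}\times\Delta^{1,1}$ from
\[
(\Gamma^{m+1,n+1}\cup d_m\dbar_n\Delta^{m+1,n+1})\times\Delta^{1,1}\cup\Delta^{m+1,n+1}\times\delta_\A\Delta^{1,1},
\]
setting $*$ on the first piece and on $\Delta^{m+1,n+1}\times\Lambda^{1,1}$, and $\theta$ on $\Delta^{m+1,n+1}\times d_1\dbar_1\Delta^{1,1}$. The left factor inclusion $\Gamma^{m+1,n+1}\cup d_m\dbar_n\Delta^{m+1,n+1}\hookrightarrow\Delta^{m+1,n+1}$ is anodyne by \Cref{example-1}, so \Cref{pushout-product} gives the extension. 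Restricting to $d_{m+1}\dbar_{n+1}\Delta^{m+1,n+1}\times\Delta^{1,1}$ yields a homotopy $*\sim\alpha$. It is relative to $\delta_\A\Delta^{m,n}$ because, by the same face computation as before, $\delta_\A\Delta^{m,n}\subset\Delta^{m,n}\cong d_{m+1}\dbar_{n+1}\Delta^{m+1,n+1}$ lands inside $\Gamma^{m+1,n+1}\cup d_m\dbar_n\Delta^{m+1,n+1}$, where $\omega=*$.

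\emph{Main obstacle.} I expect the main difficulty to be the combinatorial bookkeeping in both directions. Concretely, one must verify that $d_{m+1}\dbar_{n+1}\Delta^{m+1,n+1}\cap(\Gamma^{m+1,n+1}\cup d_m\dbar_n\Delta^{m+1,n+1})$ is exactly $\delta_\A\Delta^{m,n}$ under the identification of the face with $\Delta^{m,n}$. This requires care: for example, $d_m$ of $\Delta^{m+1}$ intersected with $d_{m+1}$ gives $d_m$ of $\Delta^m$ only in the product with the vertical $d_n$ face, which is precisely why the single diagonal piece $d_m\dbar_n$ appears rather than the full faces. Once this identification holds, both extensions are formal consequences of \Cref{pushout-product} and \Cref{example-1}.
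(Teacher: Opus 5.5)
Your proof is the paper's proof. The paper handles this lemma in one line: replace $\Lambda^{m+1,n+1}$ by $\Gamma^{m+1,n+1}\cup d_m\dbar_n\Delta^{m+1,n+1}$ in the proof of \Cref{character-BC}. That substitution produces exactly your set $C$ in the forward direction and your domain in the converse. Your check that $d_{m+1}\dbar_{n+1}\Delta^{m+1,n+1}$ meets the rest of $C$ in $\delta_\A\Delta^{m,n}$ is correct.

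One caveat, which you inherit from the paper. \Cref{example-1} is asserted only for $m\ge 1$, $n\ge 1$, so your converse is justified only in that range. The forward direction is fine for all $m,n$, since it only uses that $\Lambda^{1,1}\hookrightarrow\Delta^{1,1}$ is anodyne.

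The restriction cannot be dropped, as the following construction shows.
\begin{itemize}
\item \emph{A fibrant family.} For any simplicial set $L$, the bisimplicial set $\Delta^0\boxtimes L$ is fibrant. A map into it from $\Lambda^m_k\boxtimes\Delta^n\cup\Delta^m\boxtimes\Lambda^n_l$, or from $\Delta^{m,n}$ (with $m,n\ge 1$), is the same as a simplicial map to $L$ from $q\mapsto\pi_0$ of the $q$-th horizontal slice. That simplicial set is $\Delta^n$ in both cases, because $\Lambda^m_k$ is connected and nonempty for $m\ge 1$.
\item \emph{Homotopy is equality there.} By the same reasoning, $K\times d_0\Delta^{1,1}\hookrightarrow K\times\Delta^{1,1}$ induces an isomorphism on these horizontal $\pi_0$'s. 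So a homotopy $h$ into $\Delta^0\boxtimes L$ is determined by $\hat{d}_0h=\hat{\overline{s}}_0\alpha$, which forces $\beta=\alpha$.
\item \emph{Counterexample in bidegree $(0,0)$.} Take $L=\Delta^1$ based at the vertex $1$. The nondegenerate edge $\theta$ satisfies $d_0\dbar_0\theta=*$, while $d_1\dbar_1\theta$ is the vertex $0$, which is not homotopic to $*$.
\item \emph{Counterexample in bidegree $(0,1)$.} With $L=\Delta^2/\Lambda^2_2$, the $2$-simplex (as $\theta$) and the image of the edge $01$ (as $\alpha$) give the same failure.
\end{itemize}
So the converse, and hence the lemma as an equivalence, should be stated for $m,n\ge 1$.
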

\begin{proof}
	Replace $\Lambda^{m+1,n+1}$ by $\Gamma^{m+1,n+1}\cup d_m\dbar_n\Delta^{m+1,n+1}$ in the previous proof.
\end{proof}

\begin{corollary}
	If $X$ is contractible, then all of its Bott-Chern and Aeppli homotopy sets are singletons.
\end{corollary}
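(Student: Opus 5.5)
The plan is to use the characterizations \Cref{character-BC} and \Cref{character-A}. Since each homotopy set is, by definition, a set of relative homotopy classes, it suffices to show one thing: every representative $\alpha$ is homotopic rel the relevant boundary to the constant map $*$. Then every class equals the class of $*$, so the set is a singleton. It is nonempty because $*$ itself is a representative.

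For the Bott-Chern case, let $\alpha:\Delta^{m,n}\to X$ rel $\delta\Delta^{m,n}$. By \Cref{character-BC}, we need a map $\theta:\Delta^{m+1,n+1}\to X$ with $d_i\theta=*$ for $0\le i\le m$, $\dbar_j\theta=*$ for $0\le j\le n$, and $d_{m+1}\dbar_{n+1}\theta=\alpha$.

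We first define $\theta$ on the subobject $\Lambda^{m+1,n+1}\cup d_{m+1}\dbar_{n+1}\Delta^{m+1,n+1}\subset\Delta^{m+1,n+1}$. Set it to be $*$ on $\Lambda^{m+1,n+1}$ and $\alpha$ on the bisimplex $d_{m+1}\dbar_{n+1}\Delta^{m+1,n+1}\cong\Delta^{m,n}$. The compatibility check is that the intersection of this bisimplex with $\Lambda^{m+1,n+1}$ is contained in its boundary $\delta\Delta^{m,n}$, where $\alpha$ is $*$. This should follow from the simplicial identities. For instance, $d_i$ of the bisimplex for $i\le m$ lies in $d_i\Delta^{m+1,n+1}$ with $i\le m$, which is in $\Lambda^{m+1}_{m+1}\boxtimes\Delta^{n+1}$. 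Conversely, a face lying in that intersection must be a proper face in one of the two directions.

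Contractibility means $X\to*$ has the RLP against all monomorphisms. So the inclusion of this subobject into $\Delta^{m+1,n+1}$ admits an extension $\theta$, and $\theta$ has the required faces.

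The Aeppli case is identical, using \Cref{character-A}. Here we define $\theta$ on $\Gamma^{m+1,n+1}\cup d_m\dbar_n\Delta^{m+1,n+1}\cup d_{m+1}\dbar_{n+1}\Delta^{m+1,n+1}$. It is $*$ on the first two pieces and $\alpha$ on the last. This time compatibility requires that the intersection of the last bisimplex with the first two pieces lies in $\delta_\A\Delta^{m,n}$. In particular, the overlap with $d_m\dbar_n\Delta^{m+1,n+1}$ is $d_m\dbar_n d_{m+1}\dbar_{n+1}\Delta^{m+1,n+1}$, which is the $d_m\dbar_n$-face of $\Delta^{m,n}$, and $\alpha$ is trivial there by assumption. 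The overlaps with the faces $d_i$ for $i<m$ and $\dbar_j$ for $j<n$ land in $\Lambda^{m,n}$. Then extend $\theta$ by contractibility.

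The only real work is the compatibility of the partial maps, i.e.\ identifying these intersections of subcomplexes. In the Aeppli case this is the slightly delicate part, and I would verify it with the identities $d_i d_{m+1}=d_m d_i$ for $i\le m$, together with their vertical analogues.
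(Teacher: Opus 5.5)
Your proposal is correct and follows the paper's argument. The paper's proof is just the remark that the maps $\theta$ demanded by \Cref{character-BC} and \Cref{character-A} always exist for contractible $X$; you supply the omitted justification by prescribing $\theta$ on $\Lambda^{m+1,n+1}\cup d_{m+1}\dbar_{n+1}\Delta^{m+1,n+1}$ (resp.\ $\Gamma^{m+1,n+1}\cup d_m\dbar_n\Delta^{m+1,n+1}\cup d_{m+1}\dbar_{n+1}\Delta^{m+1,n+1}$), correctly identifying the overlaps as $\delta\Delta^{m,n}$ (resp.\ $\delta_\A\Delta^{m,n}$), and extending via the lifting property of $X\to *$ against all monomorphisms.

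One wording point in the Bott-Chern case: your ``for instance'' computation shows $\delta\Delta^{m,n}\subseteq d_{m+1}\dbar_{n+1}\Delta^{m+1,n+1}\cap\Lambda^{m+1,n+1}$, whereas compatibility needs the reverse inclusion (your ``conversely'' sentence). That inclusion follows from $(A\boxtimes B)\cap(A'\boxtimes B')=(A\cap A')\boxtimes(B\cap B')$ together with $d_{m+1}\Delta^{m+1}\cap\Lambda^{m+1}_{m+1}=\partial(d_{m+1}\Delta^{m+1})$.
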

\begin{proof}
	The required maps $\theta$ in the previous lemmas always exist.
\end{proof}

\subsection{The Bott-Chern product}\label{BC-group-operation}
In this and the following subsection, we fix a pointed fibrant bisimplicial set $(X,*)$ and assume that $m\ge 1$, $n\ge 1$.

Given $\alpha,\beta:\Delta^{m,n}\to X$ rel $\delta\Delta^{m,n}$, we define a map
\[
\theta: \Lambda^{m+1}_m\boxtimes\Delta^{n+1}\cup \Delta^{m+1}\boxtimes \Lambda^{n+1}_n\to X
\]
by $d_i\theta=*$, $0\le i<m-1$; $\dbar_j\theta=*$, $0\le j<n-1$; $d_{m-1}\theta=\overline{s}_{n-1}\alpha$, $\dbar_{n-1}\theta=s_{m-1}\alpha$; $d_{m+1}\theta=\overline{s}_n\beta$, $\dbar_{n+1}\theta=s_m\beta$.

Then one readily checks that $\theta$ is well-defined and therefore extends to a map $\theta: \Delta^{m+1,n+1}\to X$. Moreover, the map $d_m\dbar_n\theta:\Delta^{m,n}\to X$ is constant on $\delta\Delta^{m,n}$ and thus defines a class in $\pi_{m,n}^\BC(X)$.

\begin{lemma}\label{BC-product-well-define}
	The class $[d_m\dbar_n\theta]\in \pi_{m,n}^\BC(X)$ depends only on the classes $[\alpha],[\beta]\in \pi_{m,n}^\BC(X)$.
\end{lemma}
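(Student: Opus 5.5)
The plan is to show three independences: of the choice of extension $\theta$, of the representative $\alpha$, and of the representative $\beta$. The tool throughout is the relative lifting argument already used in \Cref{character-BC}, namely \Cref{pushout-product} applied to products with $\Delta^{1,1}$. Write $J=\Lambda^{m+1}_m\boxtimes\Delta^{n+1}\cup \Delta^{m+1}\boxtimes \Lambda^{n+1}_n$. The inclusion $J\hookrightarrow\Delta^{m+1,n+1}$ is a generating anodyne extension, so by \Cref{pushout-product} the map
\[
J\times\Delta^{1,1}\cup \Delta^{m+1,n+1}\times\Lambda^{1,1}\hookrightarrow \Delta^{m+1,n+1}\times\Delta^{1,1}
\]
is anodyne. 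The same holds with $\Lambda^{1,1}$ replaced by $\Lambda^{1,1}\cup d_1\dbar_1\Delta^{1,1}=\delta_\A\Delta^{1,1}$. For the latter we pair the generating anodyne map $\Lambda^{1,1}\hookrightarrow \Delta^{1,1}$, attached to the monomorphism $J\cup d_m\dbar_n\Delta^{m+1,n+1}\hookrightarrow \Delta^{m+1,n+1}$, via \Cref{pushout-product}; this version is needed when the $d_m\dbar_n$ face is also prescribed.

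First, suppose the data on $J$ are homotopic via homotopies rel $\delta\Delta^{m,n}$: $\alpha\sim_{h}\alpha'$ and $\beta\sim_{k}\beta'$. We build a map
\[
\omega: J\times\Delta^{1,1}\cup \Delta^{m+1,n+1}\times\Lambda^{1,1}\to X.
\]
On $J\times\Delta^{1,1}$ it is given face by face: $*$ on the faces $d_i,\dbar_j$ with $i<m-1$, $j<n-1$; $\overline{s}_{n-1}h$ and $s_{m-1}h$ on $d_{m-1}$, $\dbar_{n-1}$; and $\overline{s}_n k$, $s_m k$ on $d_{m+1}$, $\dbar_{n+1}$. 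Here the degeneracies act only in the $\Delta^{m,n}$-coordinate. On $\Delta^{m+1,n+1}\times\Lambda^{1,1}$ we put the appropriate degeneracies of a fixed extension $\theta$ of the $(\alpha,\beta)$-data, namely $\overline{s}_0\theta$ on $d_0$ and $s_0\theta$ on $\dbar_0$, matching \Cref{definition-homotopy}.

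Next comes the compatibility check. Compatibility on the overlaps holds because $h,k$ are constant on $\delta\Delta^{m,n}\times\Delta^{1,1}$, by the same simplicial identities that made $\theta$ well defined, and because $h|_{\Lambda^{1,1}}$ and $k|_{\Lambda^{1,1}}$ are degeneracies of $\alpha,\beta$. Extending $\omega$ over $\Delta^{m+1,n+1}\times\Delta^{1,1}$, its restriction to $\Delta^{m+1,n+1}\times d_1\dbar_1\Delta^{1,1}$ is an extension $\theta'$ of the $(\alpha',\beta')$-data. Its restriction to $d_m\dbar_n\Delta^{m+1,n+1}\times\Delta^{1,1}$ is a homotopy $d_m\dbar_n\theta\sim d_m\dbar_n\theta'$, and this homotopy is rel $\delta\Delta^{m,n}$, since every face of $d_m\dbar_n$ lies in $J$ or in degenerate images where $\omega$ is $*$.

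Independence of the choice of $\theta$ is the special case $h,k$ constant with two different fillers $\theta_0,\theta_1$. Here we instead use the variant with $\delta_\A\Delta^{1,1}$, putting $\theta_0$ on the $\Lambda^{1,1}$ part and $\theta_1$ on $d_1\dbar_1$. Using instead the homotopy-extension form above, the result is a homotopy from $d_m\dbar_n\theta_0$ to $d_m\dbar_n$ of some filler. To compare two arbitrary fillers, it is cleaner to fill $J\times\Delta^{1,1}\cup\Delta^{m+1,n+1}\times\delta_\A\Delta^{1,1}$, whose inclusion is anodyne by \Cref{pushout-product}, taking $\Lambda^{1,1}\hookrightarrow\delta_\A\Delta^{1,1}$... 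Since this is not itself anodyne, I would instead combine the homotopy-extension step with the transitivity of homotopy (the Corollary that homotopy is an equivalence relation) to pass between fillers. Alternatively, I would directly fill a $\Delta^{m+2,n+2}$-type configuration as in \Cref{example-2}, with $\theta_0,\theta_1$ on two faces.

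I expect the main obstacle to be exactly this comparison of two arbitrary fillers $\theta_0,\theta_1$ of the same data. My first attempt is \Cref{example-2}: construct a map on $(\Lambda^{m+2}_{<m}\boxtimes\Delta^{n+2}\cup\Delta^{m+2}\boxtimes\Lambda^{n+2}_{<n})\cup d_{m+1}\dbar_{n+1}\Delta^{m+2,n+2}$ using degeneracies of $\theta_0,\theta_1$, $\alpha$ and $\beta$, extend it, and read off the relation required by \Cref{character-BC} between $d_m\dbar_n\theta_0$ and $d_m\dbar_n\theta_1$. The real work is the bookkeeping of the simplicial identities.
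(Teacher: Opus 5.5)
Your first step (extending along $J\times\Delta^{1,1}\cup\Delta^{m+1,n+1}\times\Lambda^{1,1}\hookrightarrow\Delta^{m+1,n+1}\times\Delta^{1,1}$) is correct. However, it only shows that $d_m\dbar_n\theta$ is homotopic to $d_m\dbar_n$ of \emph{some} filler of the $(\alpha',\beta')$-data. The lemma also needs independence of the filler, and that is where the proposal stalls, on a misreading of \Cref{pushout-product}. You abandon the $\delta_\A\Delta^{1,1}$ variant because the map in the $\Delta^{1,1}$-factor is not anodyne. But \Cref{pushout-product} only needs \emph{one} of the two monomorphisms to be anodyne. Pairing the generating anodyne extension $J\hookrightarrow\Delta^{m+1,n+1}$ with the mere monomorphism $\delta_\A\Delta^{1,1}\hookrightarrow\Delta^{1,1}$ shows that $J\times\Delta^{1,1}\cup\Delta^{m+1,n+1}\times\delta_\A\Delta^{1,1}\hookrightarrow\Delta^{m+1,n+1}\times\Delta^{1,1}$ is anodyne. (Your justification in the first paragraph pairs the wrong maps: $\Lambda^{1,1}\hookrightarrow\Delta^{1,1}$ against $J\cup d_m\dbar_n\Delta^{m+1,n+1}\hookrightarrow\Delta^{m+1,n+1}$ produces the different domain $(J\cup d_m\dbar_n\Delta^{m+1,n+1})\times\Delta^{1,1}\cup\Delta^{m+1,n+1}\times\Lambda^{1,1}$.) The fallbacks you name do not close the gap. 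Transitivity of homotopy cannot link two \emph{given} fillers $\theta_0,\theta_1$, because your extension step never lets you prescribe the filler at the $d_1\dbar_1$ end. The \Cref{example-2} configuration is left unspecified.

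With the correct anodyne statement, the paper's proof is a single extension. Take arbitrary fillers $\theta$ of the $(\alpha,\beta)$-data and $\theta'$ of the $(\alpha',\beta')$-data. Define $\omega$ on $J\times\Delta^{1,1}\cup\Delta^{m+1,n+1}\times\delta_\A\Delta^{1,1}$ by your face data built from $h,k$ on $J\times\Delta^{1,1}$, together with $\hat{d}_0\omega=\hat{\overline{s}}_0\theta$, $\hat{\dbar}_0\omega=\hat{s}_0\theta$ and $\hat{d}_1\hat{\dbar}_1\omega=\theta'$. Compatibility on $J\times d_1\dbar_1\Delta^{1,1}$ holds because, for example, $\overline{s}_{n-1}\hat{d}_1\hat{\dbar}_1 h=\overline{s}_{n-1}\alpha'=d_{m-1}\theta'$. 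After extending, $d_m\dbar_n\omega$ is a homotopy $d_m\dbar_n\theta\sim d_m\dbar_n\theta'$ rel $\delta\Delta^{m,n}$. This handles the change of representatives and the change of filler at once, so your separate homotopy-extension step is unnecessary.
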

\begin{proof}
	Suppose that $\alpha\sim_h\alpha'$ and $\beta\sim_g\beta'$ rel $\delta\Delta^{m,n}$ and let $\theta$ (resp. $\theta'$) be constructed as above for $\alpha$ and $\beta$ (resp. $\alpha'$ and $\beta'$). We define a map
	\[
	\omega: (\Lambda^{m+1}_m\boxtimes\Delta^{n+1}\cup\Delta^{m+1}\boxtimes\Lambda^{n+1}_n)\times\Delta^{1,1}\cup \Delta^{m+1,n+1}\times \delta_\A\Delta^{1,1}\to X
	\]
	by $d_i\omega=*$, $i<m-1$; $\dbar_j\omega=*$, $j<n-1$; $d_{m-1}\omega=\overline{s}_{n-1}h$, $\dbar_{n-1}\omega=s_{m-1}h$; $d_{m+1}\omega=\overline{s}_n g$, $\dbar_{n+1}\omega=s_m g$; and moreover $\hat{d}_0\omega=\hat{\overline{s}}_0\theta$, $\hat{\dbar}_0\omega=\hat{s}_0\theta$ and $\hat{d}_1\hat{\dbar}_1\omega=\theta'$. Here $\{d_i, s_i; \dbar_j,\overline{s}_j\}$ are the bisimplicial structure maps in $\Delta^{m+1,n+1}$-direction, while $\{\hat{d}_i, \hat{s}_i; \hat{\dbar}_j,\hat{\overline{s}}_j\}$ are the bisimplicial structure maps in $\Delta^{1,1}$-direction.
	
	Then one checks that $\omega$ is well-defined and thus (by \Cref{pushout-product}) extends to a map $\omega: \Delta^{m+1,n+1}\times\Delta^{1,1}\to X$. The map $d_m\dbar_n\omega$ defines a relative homotopy between $d_m\dbar_n\theta\sim d_m\dbar_n\theta'$.
\end{proof}

\begin{remark}\label{interval-direction}
	We will frequently encounter bisimplicial sets of the form $X\times \Delta^{1,1}$ and its sub-bisimplicial sets. Just as in the proof above, we shall write $\{\hat{d}_i, \hat{s}_i; \hat{\dbar}_j,\hat{\overline{s}}_j\}$ for the bisimplicial structure maps in $\Delta^{1,1}$-direction.
\end{remark}

Therefore, we define the Bott-Chern product of $[\alpha],[\beta]$ to be
\[
[\alpha]\star[\beta]:=[d_m\dbar_n\theta].
\]

\begin{lemma}\label{BC-product-unit}
	The class of the constant map $[*]$ is a two-sided unit for $\star$.
\end{lemma}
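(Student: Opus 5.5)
The plan is to reduce the statement to exhibiting explicit degenerate fillers. First I would note that the proof of \Cref{BC-product-well-define} shows more than stated: it never used how $\theta$ and $\theta'$ were chosen, only their values on $\Lambda^{m+1}_m\boxtimes\Delta^{n+1}\cup\Delta^{m+1}\boxtimes\Lambda^{n+1}_n$. Take $\alpha'=\alpha$, $\beta'=\beta$ and $h,g$ the constant (doubly degenerate) homotopies. The argument then shows that $[d_m\dbar_n\theta]$ does not depend on the chosen extension $\theta$. So to compute $[*]\star[\beta]$ and $[\alpha]\star[*]$ it suffices to find, in each case, one extension $\theta:\Delta^{m+1,n+1}\to X$ of the prescribed partial map and to read off $d_m\dbar_n\theta$.

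\emph{Left unit.} For $[*]\star[\beta]$, the prescribed data are as follows: $d_i\theta=*$ for $i\le m-1$ and $\dbar_j\theta=*$ for $j\le n-1$, since $\alpha=*$; moreover $d_{m+1}\theta=\overline{s}_n\beta$ and $\dbar_{n+1}\theta=s_m\beta$. I propose $\theta=s_m\overline{s}_n\beta$.
\begin{itemize}
\item For $i<m$, the simplicial identities give $d_is_m\overline{s}_n\beta=s_{m-1}\overline{s}_n d_i\beta=*$, because $\beta$ is trivial on $\delta\Delta^{m,n}$.
\item We have $d_{m+1}s_m\overline{s}_n\beta=\overline{s}_n\beta$.
\item The vertical faces $\dbar_j$ behave symmetrically.
\end{itemize}
Finally $d_m\dbar_n\theta=d_ms_m\,\dbar_n\overline{s}_n\beta=\beta$, so $[*]\star[\beta]=[\beta]$.

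\emph{Right unit.} For $[\alpha]\star[*]$, the data are:
\begin{itemize}
\item $d_i\theta=*$ for $i<m-1$ and $i=m+1$, and $\dbar_j\theta=*$ for $j<n-1$ and $j=n+1$;
\item $d_{m-1}\theta=\overline{s}_{n-1}\alpha$ and $\dbar_{n-1}\theta=s_{m-1}\alpha$.
\end{itemize}
I propose $\theta=s_{m-1}\overline{s}_{n-1}\alpha$. Its faces are checked as follows.
\begin{itemize}
\item $d_{m-1}s_{m-1}=\mathrm{id}$ gives $d_{m-1}\theta=\overline{s}_{n-1}\alpha$.
\item For $i<m-1$, $d_is_{m-1}=s_{m-2}d_i$, which kills $\alpha$.
\item $d_{m+1}s_{m-1}=s_{m-1}d_m$, and $d_m\alpha=*$.
\item The vertical faces behave symmetrically.
\end{itemize}
Then $d_m\dbar_n\theta=d_ms_{m-1}\,\dbar_n\overline{s}_{n-1}\alpha=\alpha$, so $[\alpha]\star[*]=[\alpha]$.

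The only step needing care is the first reduction, namely that the product is independent of the filler and not merely of the representatives. I expect it to go through verbatim from the proof of \Cref{BC-product-well-define}. The map $\omega$ is defined on $(\Lambda^{m+1}_m\boxtimes\Delta^{n+1}\cup\Delta^{m+1}\boxtimes\Lambda^{n+1}_n)\times\Delta^{1,1}\cup\Delta^{m+1,n+1}\times\delta_\A\Delta^{1,1}$, with degenerate data on the first piece. On the second piece it is given by $\hat{\overline{s}}_0\theta$, $\hat{s}_0\theta$ and $\hat d_1\hat\dbar_1\omega=\theta'$. The compatibility checks are identical, and \Cref{pushout-product} supplies the extension. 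The remaining verifications are routine applications of the simplicial identities in each direction, using that the horizontal and vertical operators commute.
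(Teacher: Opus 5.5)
Your proposal is correct and matches the paper's proof. The paper uses the same degenerate fillers, $s_{m-1}\overline{s}_{n-1}\alpha$ for $[\alpha]\star[*]$ and $s_m\overline{s}_n\alpha$ for $[*]\star[\alpha]$, and reads off $d_m\dbar_n\theta$. Your remark that the class does not depend on the chosen filler (the proof of \Cref{BC-product-well-define} applied with constant homotopies) makes explicit what the paper's ``by definition'' tacitly relies on.
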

\begin{proof}
	Given $\alpha: \Delta^{m,n}\to X$ rel $\delta\Delta^{m,n}$. Take $\theta=s_{m-1}\overline{s}_{n-1}\alpha$, then by definition
	\[
	[\alpha]\star[*]=[d_m\dbar_n\theta]=[\alpha].
	\]
	Similarly, take $\theta=s_m\overline{s}_n\alpha$, then by definition
	\[
	[*]\star[\alpha]=[d_m\dbar_n\theta]=[\alpha].
	\]
\end{proof}

\begin{lemma}\label{BC-product-associative}
	The Bott-Chern product $\star$ is associative.
\end{lemma}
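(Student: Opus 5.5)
We mimic Kan's proof of associativity for simplicial homotopy groups, going up one bidimension. Fix $\alpha,\beta,\gamma:\Delta^{m,n}\to X$ rel $\delta\Delta^{m,n}$. First choose three $\Delta^{m+1,n+1}$-fillers as in the definition of $\star$:
\begin{itemize}
\item $\theta_{\alpha\beta}$, with $d_{m-1},\dbar_{n-1}$ given by degeneracies of $\alpha$ and $d_{m+1},\dbar_{n+1}$ given by degeneracies of $\beta$;
\item $\theta_{\beta\gamma}$, defined in the same way from $\beta,\gamma$;
\item $\theta_{(\alpha\beta)\gamma}$, built from $d_m\dbar_n\theta_{\alpha\beta}$ and $\gamma$.
\end{itemize}
The goal is to show that $d_m\dbar_n\theta_{(\alpha\beta)\gamma}$ is a legitimate output of the product construction for the inputs $\alpha$ and $d_m\dbar_n\theta_{\beta\gamma}$. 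By \Cref{BC-product-well-define}, any choice of filler gives the same class, so this yields $([\alpha]\star[\beta])\star[\gamma]=[\alpha]\star([\beta]\star[\gamma])$.

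To produce that filler, work in $\Delta^{m+2,n+2}$, following the pattern of \Cref{example-2}. Define
\[
\Theta:\Lambda^{m+2}_{<m}\boxtimes\Delta^{n+2}\cup\Delta^{m+2}\boxtimes\Lambda^{n+2}_{<n}\cup d_{m+1}\dbar_{n+1}\Delta^{m+2,n+2}\to X,
\]
or a variant of this domain with different face indices, as follows:
\begin{itemize}
\item the faces $d_i,\dbar_j$ with $i<m-1$, $j<n-1$ are $*$;
\item $d_{m-1}$ and $\dbar_{n-1}$ are suitable double degeneracies of $\theta_{\alpha\beta}$ (for instance $\overline{s}_{n}$ and $s_m$ applied to it, up to index bookkeeping);
\item the remaining prescribed faces are $\theta_{\beta\gamma}$ and $\theta_{(\alpha\beta)\gamma}$, placed in the $(m+1,n+1)$, $(m+2,n+2)$ or $(m,n)$ positions as the simplicial identities dictate.
\end{itemize}
The aim is to arrange these so that, after extending $\Theta$ to $\Delta^{m+2,n+2}$ via the anodyne inclusion of \Cref{example-2} or \Cref{example-1}, the missing composite face $d_m\dbar_n\Theta$ or $d_{m+2}\dbar_{n+2}\Theta$ is a filler of the required shape. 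That is, its $(m-1)$ and $(n-1)$ faces should be degeneracies of $\alpha$, its $(m+1)$ and $(n+1)$ faces degeneracies of $d_m\dbar_n\theta_{\beta\gamma}$, and its $(m,n)$ double face should equal $d_m\dbar_n\theta_{(\alpha\beta)\gamma}$. In Kan's one-variable proof this is exactly the horn $\Lambda^{m+2}_{m+1}$ with faces $*,\ldots,*,\theta_{\alpha\beta},\_,\theta_{(\alpha\beta)\gamma},\theta_{\beta\gamma}$. Here the corresponding data must be duplicated in the vertical direction, using $\overline{s}$-degeneracies on horizontal faces and $s$-degeneracies on vertical faces, just as in the definition of $\theta$.

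The main obstacle is the combinatorics of this step. The face we want to recover, $d_m\dbar_n$ of the big simplex, must be a \emph{double} face, which is the kind the anodyne inclusions of Examples \ref{example-1} and \ref{example-2} allow us to leave out. Meanwhile, every single face $d_i$ or $\dbar_j$ we prescribe must be an honest $\Delta^{m+1,n+2}$ or $\Delta^{m+2,n+1}$ map, which forces us to use degeneracies of the $\theta$'s rather than the $\theta$'s themselves. I would first try to prescribe:
\begin{itemize}
\item $d_{m-1}\Theta=\overline{s}_{n}\theta_{\alpha\beta}$ and $\dbar_{n-1}\Theta=s_m\theta_{\alpha\beta}$;
\item $d_{m+2}\Theta=\overline{s}_{n+1}\theta_{\beta\gamma}$ and $\dbar_{n+2}\Theta=s_{m+1}\theta_{\beta\gamma}$;
\item the double face $d_{m+1}\dbar_{n+1}\Theta=\theta_{(\alpha\beta)\gamma}$.
\end{itemize}
I would then check compatibility on all pairwise intersections, where the rel $\delta\Delta^{m,n}$ conditions make most terms $*$. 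If this horn shape is not anodyne, I would first try to fix it by adjusting which faces are left free, using the pushout trick of \Cref{example-1} to pass between shapes. Only if that fails would I introduce an intermediate homotopy through $\Fun(\Delta^{1,1},X)$ and \Cref{pushout-product}.

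Finally, read off $d_m\dbar_n\Theta$, or the appropriate double face, verify its faces using the simplicial identities, and conclude by \Cref{BC-product-well-define}.
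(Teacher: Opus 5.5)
Your plan is the paper's plan: fill a $\Delta^{m+2,n+2}$ whose faces $d_{m-1},\dbar_{n-1}$ are degeneracies of $\theta_{\alpha\beta}$, whose faces $d_{m+2},\dbar_{n+2}$ are $\overline{s}_{n+1}\theta_{\beta\gamma}$ and $s_{m+1}\theta_{\beta\gamma}$, and whose double face $d_{m+1}\dbar_{n+1}$ is $\theta_{(\alpha\beta)\gamma}$. You then read off $\xi=d_m\dbar_n\Theta$ and conclude by well-definedness.

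However, the one concrete prescription you commit to does not glue, so as written $\Theta$ is not defined. Take $d_{m-1}\Theta=\overline{s}_n\theta_{\alpha\beta}$ and set $\sigma=d_m\dbar_n\theta_{\alpha\beta}$. Compatibility along $d_{m-1}\Delta^{m+2,n+2}\cap d_{m+1}\dbar_{n+1}\Delta^{m+2,n+2}$ requires $d_m\dbar_{n+1}\overline{s}_n\theta_{\alpha\beta}=d_{m-1}\theta_{(\alpha\beta)\gamma}$, i.e.\ $d_m\theta_{\alpha\beta}=\overline{s}_{n-1}\sigma$. Applying $\dbar_{n+1}$ gives $d_m s_m\beta=\beta$ on the left and $\overline{s}_{n-1}\dbar_n\sigma=*$ on the right. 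Compatibility with $d_{m+2}\Theta$ fails as well, since it would require $\overline{s}_n\overline{s}_n\beta=\overline{s}_{n-1}\overline{s}_n\beta$.

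The correct choice follows the convention $d_{m-1}\theta=\overline{s}_{n-1}\alpha$ used in the definition of $\star$: take $d_{m-1}\Theta=\overline{s}_{n-1}\theta_{\alpha\beta}$ and $\dbar_{n-1}\Theta=s_{m-1}\theta_{\alpha\beta}$. Then $d_m\dbar_{n+1}\overline{s}_{n-1}\theta_{\alpha\beta}=\overline{s}_{n-1}\sigma$, and $d_{m+1}\overline{s}_{n-1}\theta_{\alpha\beta}=\overline{s}_{n-1}\overline{s}_n\beta=\overline{s}_{n+1}\overline{s}_{n-1}\beta=d_{m-1}\overline{s}_{n+1}\theta_{\beta\gamma}$. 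The remaining compatibility checks are routine.

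You also leave the domain and its anodyne-ness open, and the domain you name cannot work. The domain of \Cref{example-2} does not contain $d_{m+2}\Delta^{m+2,n+2}$, so it cannot carry $\theta_{\beta\gamma}$. The paper uses $\Lambda^{m+2}_{m,m+1}\boxtimes\Delta^{n+2}\cup\Delta^{m+2}\boxtimes\Lambda^{n+2}_{n,n+1}\cup d_{m+1}\dbar_{n+1}\Delta^{m+2,n+2}$. This is the bisimplicial analogue of Kan's horn; with the faces you list, that horn is $\Lambda^{m+2}_m$, not $\Lambda^{m+2}_{m+1}$.

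This domain is anodyne by the pushout trick of \Cref{example-1}. First attach $d_{m+1}\Delta^{m+2,n+2}$ along $\Lambda^{m+1}_m\boxtimes\Delta^{n+2}\cup\Delta^{m+1}\boxtimes\Lambda^{n+2}_n$. Then attach $\dbar_{n+1}\Delta^{m+2,n+2}$ along $\Lambda^{m+2}_m\boxtimes\Delta^{n+1}\cup\Delta^{m+2}\boxtimes\Lambda^{n+2}_n$ restricted appropriately, i.e.\ along $\Lambda^{m+2}_m\boxtimes\Delta^{n+1}\cup\Delta^{m+2}\boxtimes\Lambda^{n+1}_n$. The result is $\Lambda^{m+2}_m\boxtimes\Delta^{n+2}\cup\Delta^{m+2}\boxtimes\Lambda^{n+2}_n$, whose inclusion into $\Delta^{m+2,n+2}$ is a generating anodyne extension.

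With these corrections the read-off works as you intend. Set $\eta=d_m\dbar_n\theta_{\beta\gamma}$. Then $d_{m-1}\xi=d_{m-1}\dbar_n\overline{s}_{n-1}\theta_{\alpha\beta}=\overline{s}_{n-1}\alpha$ and $d_{m+1}\xi=d_m\dbar_n\overline{s}_{n+1}\theta_{\beta\gamma}=\overline{s}_n\eta$, and symmetrically for $\dbar$. Moreover $d_m\dbar_n\xi=d_m\dbar_n d_{m+1}\dbar_{n+1}\Theta=d_m\dbar_n\theta_{(\alpha\beta)\gamma}$. So $\xi$ is a valid filler for $[\alpha]\star[\eta]$, and \Cref{BC-product-well-define} gives associativity.
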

\begin{proof}
	Given $\alpha,\beta,\gamma: \Delta^{m,n}\to X$ rel $\delta\Delta^{m,n}$. We can follow the definition to construct $\theta$, $\theta'$ and $\theta''$ such that
	$\sigma=d_m\dbar_n\theta$ represents $[\alpha]\star[\beta]$, $d_m\dbar_n\theta'$ represents $[\sigma]\star[\gamma]$, and $\eta=d_m\dbar_n\theta''$ represents $[\beta]\star[\gamma]$. Using these, we can define a map
	\[
	\omega: (\Lambda^{m+2}_{m,m+1}\boxtimes\Delta^{n+2}\cup\Delta^{m+2}\boxtimes\Lambda^{n+2}_{n,n+1})\cup d_{m+1}\dbar_{n+1}\Delta^{m+2,n+2}\to X
	\]
	by $d_i\omega=*$, $i<m-1$; $\dbar_j\omega=*$, $j<n-1$; $d_{m-1}\omega=\overline{s}_{n-1}\theta$, $\dbar_{n-1}\omega=s_{m-1}\theta$; $d_{m+1}\dbar_{n+1}\omega=\theta'$; and $d_{m+2}\omega=\overline{s}_{n+1}\theta''$, $\dbar_{n+2}\omega=s_{m+1}\theta''$.
	
	Then one can check that $\omega$ is well-defined, for example
	\begin{align*}
		d_{m-1}(d_{m+1}\dbar_{n+1}\omega)& =d_{m-1}\theta'=\overline{s}_{n-1}\sigma\\
		&= \overline{s}_{n-1} (d_m\dbar_n\theta)=d_m\dbar_{n+1}\overline{s}_{n-1}\theta =d_m \dbar_{n+1}(d_{m-1}\omega).
	\end{align*}

	It follows (by \Cref{example-1}) that $\omega$ extends to a map $\omega:\Delta^{m+2,n+2}\to X$. Now let us examine $\xi=d_m\dbar_n\omega$. It is clear that $d_i\xi=*$, $i<m-1$ and $\dbar_j\xi=*$, $j<n-1$, and we compute
	\begin{align*}
		d_{m-1}\xi &=d_{m-1}(d_m\dbar_n\omega)=d_{m-1}\dbar_n(d_{m-1}\omega)=d_{m-1}\dbar_n\overline{s}_{n-1}\theta\\
		&=d_{m-1}\theta=\overline{s}_{m-1}\alpha,\\
		d_{m+1}\xi &=d_{m+1}(d_m \dbar_n\omega)=d_m\dbar_n (d_{m+2}\omega)=d_m\dbar_n\overline{s}_{n+1}\theta''\\
		&=\overline{s}_n (d_m\dbar_n\theta'')=\overline{s}_n\eta.
	\end{align*}
	Similarly we have $\dbar_{n-1}\xi=s_{m-1}\alpha$ and $\dbar_{n+1}\xi=s_m\eta$. This implies that
	\begin{align*}
		[\alpha]\star([\beta]\star[\gamma]) &= [\alpha]\star[\eta]=[d_m\dbar_n\xi]\\
		&=[d_m \dbar_n (d_m\dbar_n\omega)]=[d_m \dbar_n (d_{m+1}\dbar_{n+1}\omega)]\\
		&=[d_m\dbar_n\theta']=[\sigma]\star[\gamma]\\
		&=([\alpha]\star[\beta])\star[\gamma].
	\end{align*}
	This proves that $\star$ is associative.
\end{proof}

\begin{theorem} Let $(X,*)$ be a pointed fibrant bisimplicial set.
	For $m\ge 1, n\ge 1$, the Bott-Chern product $\star$ turns $\pi_{m,n}^\BC(X,*)$ into a monoid.
\end{theorem}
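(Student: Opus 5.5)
The theorem follows by assembling the three lemmas just proved, so the plan is to check that together they give exactly the monoid axioms.

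First, the binary operation $\star$ on $\pi_{m,n}^\BC(X,*)$ must be defined. For representatives $\alpha,\beta:\Delta^{m,n}\to X$ rel $\delta\Delta^{m,n}$, we build $\theta$ on $\Lambda^{m+1}_m\boxtimes\Delta^{n+1}\cup\Delta^{m+1}\boxtimes\Lambda^{n+1}_n$. This subobject is the domain of a generating anodyne extension, available because $m+1\ge 1$ and $n+1\ge 1$. Since $X$ is fibrant, $\theta$ extends to $\Delta^{m+1,n+1}$. We then check that $d_m\dbar_n\theta$ restricts to $*$ on $\delta\Delta^{m,n}$, using the simplicial identities. For $i<m$ one has $d_i d_m\dbar_n\theta=d_{m-1}\dbar_n d_i\theta$, which is $*$, or equals $d_{m-1}\dbar_n\overline{s}_{n-1}\alpha=d_{m-1}\alpha=*$ when $i=m-1$. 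For $i=m$ one has $d_m d_m=d_m d_{m+1}$, which gives $d_m\dbar_n\overline{s}_n\beta=d_m\beta=*$. The vertical faces are handled symmetrically. The hypothesis $m,n\ge1$ is what makes the indices $m-1$ and $n-1$ meaningful.

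Next, the choice of extension $\theta$ must not matter. Lemma \ref{BC-product-well-define}, applied with the trivial homotopies $\alpha\sim\alpha$ and $\beta\sim\beta$, shows that any two extensions give relatively homotopic results. The same lemma shows that the class depends only on $[\alpha]$ and $[\beta]$. Hence $\star:\pi_{m,n}^\BC(X)\times\pi_{m,n}^\BC(X)\to\pi_{m,n}^\BC(X)$ is a well-defined function.

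Finally, Lemma \ref{BC-product-unit} shows that $[*]$ is a two-sided unit, and Lemma \ref{BC-product-associative} gives associativity. Together these are the monoid axioms.

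The main point to watch is that the associativity argument constructed $\theta$, $\theta'$ and $\theta''$ as specific extensions. That argument therefore relies on the independence of the choice of extension established in the second step. Similarly, in the unit lemma the explicit choices $s_{m-1}\overline{s}_{n-1}\alpha$ and $s_m\overline{s}_n\alpha$ must actually be extensions of the prescribed horn data. Checking this is a short face-identity computation; for instance $d_{m+1}s_{m-1}\overline{s}_{n-1}\alpha=s_{m-1}\overline{s}_{n-1}d_m\alpha=*=\overline{s}_n *$. Once these points are in place, the theorem follows.
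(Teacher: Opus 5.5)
Your proposal is correct and follows the paper's proof, which likewise derives the theorem directly from the well-definedness, unit, and associativity lemmas. Your extra checks make explicit what the paper leaves as ``one readily checks'': that $d_m\dbar_n\theta$ is constant on $\delta\Delta^{m,n}$, that the well-definedness lemma applied to constant homotopies gives independence of the chosen filler (which the associativity argument implicitly uses), and that the degenerate fillers in the unit lemma satisfy the prescribed horn data.
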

\begin{proof}
	This follows from the previous lemmas.
\end{proof}

\subsection{The Aeppli product}
Recall that we fix a pointed fibrant bisimplicial set $(X,*)$ and assume that $m\ge 1$, $n\ge 1$. Also recall the notation $\Gamma^{m+1,n+1}=\Lambda^{m+1}_{m,m+1}\boxtimes\Delta^{n+1}\cup\Delta^{m+1}\boxtimes\Lambda^{n+1}_{n,n+1}$.

Given $\alpha,\beta:\Delta^{m,n}\to X$ rel $\delta_\A\Delta^{m,n}$, we define a map
\[
\theta: \Gamma^{m+1,n+1}\cup d_{m+1}\dbar_{n+1}\Delta^{m+1,n+1}\to X
\]
by $d_i\theta=*$, $0\le i<m-1$; $\dbar_j\theta=*$, $0\le j<n-1$; $d_{m-1}\theta=\overline{s}_{n-1}\alpha$, $\dbar_{n-1}\theta=s_{m-1}\alpha$; and $d_{m+1}\dbar_{n+1}\theta=\beta$.

Then $\theta$ is well-defined and (by \Cref{example-1}) extends to a map $\theta:\Delta^{m+1,n+1}\to X$. Moreover the map $d_m\dbar_n\theta$ is constant on $\delta_\A\Delta^{m,n}$.
\begin{lemma}\label{A-product-well-define}
	The class $[d_m\dbar_n\theta]\in \pi_{m,n}^\A(X)$ depends only on the classes $[\alpha], [\beta]\in \pi_{m,n}^\A(X)$.
\end{lemma}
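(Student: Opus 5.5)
Given homotopies $\alpha\sim_h\alpha'$ and $\beta\sim_g\beta'$ rel $\delta_\A\Delta^{m,n}$, let $\theta$ be constructed from $(\alpha,\beta)$ and $\theta'$ from $(\alpha',\beta')$. The plan follows the proof of \Cref{BC-product-well-define}: build a homotopy $\omega:\Delta^{m+1,n+1}\times\Delta^{1,1}\to X$ with $\theta\sim_\omega\theta'$, and take $d_m\dbar_n\omega$ as the required relative homotopy. The construction also has to absorb the choice of extension of $\theta$. Since $\alpha=\alpha'$, $\beta=\beta'$ with constant homotopies is allowed, this shows independence of the filler as well.

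First I define $\omega$ on
\[
(\Gamma^{m+1,n+1}\cup d_{m+1}\dbar_{n+1}\Delta^{m+1,n+1})\times\Delta^{1,1}\cup \Delta^{m+1,n+1}\times\delta_\A\Delta^{1,1}
\]
as follows. On the first piece, set $d_i\omega=*$ for $i<m-1$ and $\dbar_j\omega=*$ for $j<n-1$. Set $d_{m-1}\omega=\overline{s}_{n-1}h$ and $\dbar_{n-1}\omega=s_{m-1}h$, and $d_{m+1}\dbar_{n+1}\omega=g$. On $\Delta^{m+1,n+1}\times\delta_\A\Delta^{1,1}$, set $\hat d_0\omega=\hat{\overline{s}}_0\theta$, $\hat\dbar_0\omega=\hat s_0\theta$ and $\hat d_1\hat\dbar_1\omega=\theta'$. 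This matches the form of the definition of homotopy: $\delta_\A\Delta^{1,1}=\Lambda^{1,1}\cup d_1\dbar_1\Delta^{1,1}$ is exactly the data of $d_0,\dbar_0,d_1\dbar_1$.

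The inclusion of this domain into $\Delta^{m+1,n+1}\times\Delta^{1,1}$ is anodyne by \Cref{pushout-product}. The reason is that $\Gamma^{m+1,n+1}\cup d_{m+1}\dbar_{n+1}\Delta^{m+1,n+1}\hookrightarrow\Delta^{m+1,n+1}$ is anodyne (the variant in \Cref{example-1}) and $\delta_\A\Delta^{1,1}\hookrightarrow\Delta^{1,1}$ is a monomorphism. Since $X$ is fibrant, $\omega$ extends.

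Next, set $\eta=d_m\dbar_n\omega:\Delta^{m,n}\times\Delta^{1,1}\to X$. Using the simplicial identities and the $\Delta^{1,1}$-direction faces, $\hat d_0\eta=\hat{\overline s}_0 d_m\dbar_n\theta$, $\hat\dbar_0\eta=\hat s_0 d_m\dbar_n\theta$ and $\hat d_1\hat\dbar_1\eta=d_m\dbar_n\theta'$, so $\eta$ is a homotopy from $d_m\dbar_n\theta$ to $d_m\dbar_n\theta'$. To see that it is relative to $\delta_\A\Delta^{m,n}$:
\begin{itemize}
\item For $i<m$, the identity $d_i d_m=d_{m-1}d_i$ reduces $d_i\eta$ to faces of $d_i\omega$. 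These are $*$, or for $i=m-1$ equal $d_{m-1}\dbar_n\overline{s}_{n-1}h=d_{m-1}h=*$, because $h$ is rel $\delta_\A$.
\item The same argument applies to $\dbar_j$ for $j<n$.
\item For $d_{m-1}\dbar_{n-1}\eta$, the same computation applies using the fact that $h$ vanishes on $d_{m-1}\dbar_{n-1}\Delta^{m,n}$.
\end{itemize}

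The main obstacle is checking that $\omega$ is well-defined on the overlaps of the prescribed faces. The delicate overlaps are:
\begin{itemize}
\item $d_{m-1}\omega$ against $d_{m+1}\dbar_{n+1}\omega$. Here $d_{m-1}d_{m+1}\dbar_{n+1}\omega=d_m\dbar_{n+1}\overline{s}_{n-1}h=d_m\dbar_n h$, and this must agree with $d_{m-1}g$. Both vanish because $h,g$ are rel $\delta_\A\Delta^{m,n}$: the identity $d_{m-1}\dbar_{n-1}=*$ is used, together with the fact that $\Lambda^{m,n}$ contains the faces $d_i$ with $i\le m-1$ and $\dbar_j$ with $j\le n-1$.
\item The $\Gamma$-part against $\Delta^{m+1,n+1}\times\delta_\A\Delta^{1,1}$. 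This reduces to the defining relations of $h,g$ as homotopies and to the construction of $\theta,\theta'$.
\end{itemize}
I would verify these face by face exactly as in \Cref{BC-product-well-define}, taking care with the index shifts $d_{m-1}\overline{s}_{n-1}$ versus $\dbar_{n-1}s_{m-1}$ on the doubled faces $d_{m-1}\dbar_{n-1}$.
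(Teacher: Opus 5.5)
Your construction is the paper's. The paper proves this lemma by rerunning the proof of \Cref{BC-product-well-define} with the horn $\Lambda^{m+1}_m\boxtimes\Delta^{n+1}\cup\Delta^{m+1}\boxtimes\Lambda^{n+1}_n$ replaced by $\Gamma^{m+1,n+1}\cup d_{m+1}\dbar_{n+1}\Delta^{m+1,n+1}$. That is exactly your domain and face data for $\omega$, extended via \Cref{example-1} and \Cref{pushout-product}, with $d_m\dbar_n\omega$ as the relative homotopy.

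There is, however, an index slip at exactly the point where the Aeppli case differs from the Bott-Chern case. The extra face in $\delta_\A\Delta^{m,n}=\Lambda^{m,n}\cup d_m\dbar_n\Delta^{m,n}$ is $d_m\dbar_n$, not $d_{m-1}\dbar_{n-1}$. The face $d_{m-1}\dbar_{n-1}\Delta^{m,n}$ already lies in $\Lambda^{m,n}$ and is covered by your first bullet. So your third bullet checks a redundant face and omits the one that is needed. The missing check is $d_m\dbar_n\eta=d_m\dbar_n d_m\dbar_n\omega=d_m\dbar_n d_{m+1}\dbar_{n+1}\omega=d_m\dbar_n g=*$. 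Note that it uses that $g$, not $h$, is rel $\delta_\A\Delta^{m,n}$.

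The same confusion appears in your overlap check of $d_{m-1}\omega$ against $d_{m+1}\dbar_{n+1}\omega$. The correct value is $d_m\dbar_{n+1}\overline{s}_{n-1}h=\overline{s}_{n-1}d_m\dbar_n h$, which vanishes because $h$ is trivial on the top doubled face $d_m\dbar_n\Delta^{m,n}$. It agrees with $d_{m-1}g=*$ because $g$ is trivial on $\Lambda^{m,n}$. The identity $d_{m-1}\dbar_{n-1}=*$ that you cite plays no role.

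With these corrections the argument is complete.
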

\begin{proof}
	The proof is similar to \Cref{BC-product-well-define}, it suffices to replace $\Lambda^{m+1}_m\boxtimes\Delta^{n+1}\cup\Delta^{m+1}\boxtimes\Lambda^{n+1}_n$ by $\Gamma^{m+1,n+1}\cup d_{m+1}\dbar_{n+1}\Delta^{m+1,n+1}$.
\end{proof}

Therefore we define the Aeppli product of $[\alpha],[\beta]$ to be
\[
[\alpha]\diamond [\beta]:=[d_m\dbar_n\theta].
\]
\begin{lemma}
	The class of the constant map $[*]$ is a two-sided unit for $\diamond$.
\end{lemma}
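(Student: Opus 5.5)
Since the class $[\alpha]\diamond[\beta]$ does not depend on the chosen extension $\theta$ (\Cref{A-product-well-define}, applied with constant homotopies), we only need to exhibit, for each of $[\alpha]\diamond[*]$ and $[*]\diamond[\alpha]$, one explicit extension $\theta\colon\Delta^{m+1,n+1}\to X$ of the prescribed data on $\Gamma^{m+1,n+1}\cup d_{m+1}\dbar_{n+1}\Delta^{m+1,n+1}$. Then we check that $d_m\dbar_n\theta=\alpha$, mirroring \Cref{BC-product-unit}. Throughout, $\alpha\colon\Delta^{m,n}\to X$ is rel $\delta_\A\Delta^{m,n}$, i.e.\ $d_i\alpha=*$ for $i<m$, $\dbar_j\alpha=*$ for $j<n$, and $d_m\dbar_n\alpha=*$.

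\emph{Right unit.} For $[\alpha]\diamond[*]$ the data are $d_i\theta=*$ ($i<m-1$), $\dbar_j\theta=*$ ($j<n-1$), $d_{m-1}\theta=\overline{s}_{n-1}\alpha$, $\dbar_{n-1}\theta=s_{m-1}\alpha$, and $d_{m+1}\dbar_{n+1}\theta=*$. I take $\theta=s_{m-1}\overline{s}_{n-1}\alpha$, the same choice as in \Cref{BC-product-unit}. The simplicial identities $d_is_{m-1}=s_{m-2}d_i$ for $i<m-1$ and $d_{m-1}s_{m-1}=\mathrm{id}$, together with their barred analogues, give the first four conditions. For the last one,
\[
d_{m+1}\dbar_{n+1}s_{m-1}\overline{s}_{n-1}\alpha=s_{m-1}\overline{s}_{n-1}d_m\dbar_n\alpha=*,
\]
using $d_{m+1}s_{m-1}=s_{m-1}d_m$ and the Aeppli boundary condition $d_m\dbar_n\alpha=*$. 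Finally $d_m\dbar_n\theta=\alpha$ because $d_ms_{m-1}=\mathrm{id}$ and $\dbar_n\overline{s}_{n-1}=\mathrm{id}$.

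\emph{Left unit.} For $[*]\diamond[\alpha]$ the data are: $\theta$ is $*$ on all faces $d_i$ with $i<m$ and $\dbar_j$ with $j<n$, and $d_{m+1}\dbar_{n+1}\theta=\alpha$. I take $\theta=s_m\overline{s}_n\alpha$. For $i\le m-1$ we have $d_is_m=s_{m-1}d_i$, so $d_i\theta=s_{m-1}\overline{s}_n d_i\alpha=*$ because $d_i\alpha=*$ for $i<m$; the barred faces are handled the same way. Next, $d_{m+1}s_m=\mathrm{id}$ and $\dbar_{n+1}\overline{s}_n=\mathrm{id}$ give $d_{m+1}\dbar_{n+1}\theta=\alpha$. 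Lastly $d_ms_m=\mathrm{id}$ and $\dbar_n\overline{s}_n=\mathrm{id}$ give $d_m\dbar_n\theta=\alpha$. Hence $[*]\diamond[\alpha]=[\alpha]$.

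\emph{Main obstacle.} The only subtle point is the justification that an arbitrary extension may be used. \Cref{A-product-well-define} is formulated for the extension produced by the filler of \Cref{example-1}. However, its proof, via the lifting $\omega$ over $\Delta^{m+1,n+1}\times\delta_\A\Delta^{1,1}$, compares any two extensions $\theta,\theta'$ whose boundary data are homotopic, so the explicit degenerate extensions above are legitimate. I would state this explicitly, noting that the lifting argument never used how $\theta$ was obtained. The remaining checks are routine simplicial identities, and the key input is the condition $d_m\dbar_n\alpha=*$ in the right-unit case.
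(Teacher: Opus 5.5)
Your proposal is correct and follows the paper's proof, which just says the argument is identical to the Bott-Chern unit lemma, i.e.\ it uses the same degenerate fillers $\theta=s_{m-1}\overline{s}_{n-1}\alpha$ for the right unit and $\theta=s_m\overline{s}_n\alpha$ for the left unit. Two things the paper leaves implicit you make explicit: the right-unit check $d_{m+1}\dbar_{n+1}\theta=s_{m-1}\overline{s}_{n-1}d_m\dbar_n\alpha=*$, which needs the Aeppli condition $d_m\dbar_n\alpha=*$, and the observation that the proof of \Cref{A-product-well-define} compares arbitrary extensions, so explicit degenerate fillers may be used.
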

\begin{proof}
	The proof is identical to \Cref{BC-product-unit}.
\end{proof}

\begin{lemma}
	The Aeppli product $\diamond$ is associative.
\end{lemma}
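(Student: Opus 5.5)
We follow the proof of \Cref{BC-product-associative}, adapting the boundary conditions to the Aeppli setting. Let $\alpha,\beta,\gamma:\Delta^{m,n}\to X$ rel $\delta_\A\Delta^{m,n}$, and make three auxiliary choices as in the definition of $\diamond$:
\begin{enumerate}[(1)]
\item $\theta$ for the pair $(\alpha,\beta)$, with $\sigma=d_m\dbar_n\theta$ representing $[\alpha]\diamond[\beta]$;
\item $\theta'$ for the pair $(\sigma,\gamma)$, so $d_m\dbar_n\theta'$ represents $([\alpha]\diamond[\beta])\diamond[\gamma]$;
\item $\theta''$ for the pair $(\beta,\gamma)$, with $\eta=d_m\dbar_n\theta''$ representing $[\beta]\diamond[\gamma]$.
\end{enumerate}
By \Cref{A-product-well-define}, the products do not depend on these choices, so we may use any fillers we like.

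Next we define $\omega$ on the subobject
\[
(\Lambda^{m+2}_{<m}\boxtimes\Delta^{n+2}\cup\Delta^{m+2}\boxtimes\Lambda^{n+2}_{<n})\cup d_{m+1}\dbar_{n+1}\Delta^{m+2,n+2}
\]
of \Cref{example-2}. The prescription is:
\begin{enumerate}[(1)]
\item $d_i\omega=*$ for $i<m-1$ and $\dbar_j\omega=*$ for $j<n-1$;
\item $d_{m-1}\omega=\overline{s}_{n-1}\theta$ and $\dbar_{n-1}\omega=s_{m-1}\theta$;
\item $d_{m+1}\dbar_{n+1}\omega=\theta''$ lifted appropriately, i.e.\ the face carrying $\gamma$.
\end{enumerate}
Which of $\theta'$ and $\theta''$ goes on the face $d_{m+1}\dbar_{n+1}$ has to be decided by the Aeppli boundary bookkeeping. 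In the Bott-Chern case $\beta$ and $\gamma$ entered through the faces $d_{m+1},\dbar_{n+1}$ separately. Here the second input enters only through the corner $d_{m+1}\dbar_{n+1}$, so the natural assignment is $d_{m+1}\dbar_{n+1}\omega=\theta'$. This leaves $\theta''$, which should enter through the $(m+2,n+2)$ corner instead.

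This forces a change of domain. We would use
\[
\Lambda^{m+2}_{m,m+1,m+2}\boxtimes\Delta^{n+2}\cup\Delta^{m+2}\boxtimes\Lambda^{n+2}_{n,n+1,n+2}\cup d_{m+1}\dbar_{n+1}\Delta^{m+2,n+2}\cup d_{m+2}\dbar_{n+2}\Delta^{m+2,n+2},
\]
and set $d_{m+2}\dbar_{n+2}\omega=\theta''$. We then need this inclusion into $\Delta^{m+2,n+2}$ to be anodyne. We would prove this exactly as in \Cref{example-1,example-2}: attach $d_{m+2}\Delta\cup\dbar_{n+2}\Delta$ along their intersection with the domain. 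That intersection should be a copy of $\Gamma\cup d\dbar\Delta$ in each direction, and these are anodyne by \Cref{example-1}. The result is $\Lambda^{m+2}_{m,m+1}\boxtimes\Delta^{n+2}\cup\Delta^{m+2}\boxtimes\Lambda^{n+2}_{n,n+1}\cup d_{m+1}\dbar_{n+1}\Delta^{m+2,n+2}$, which is again anodyne by the same pushout trick as in \Cref{example-1}. We then check well-definedness on overlaps using the simplicial identities, for instance $d_{m-1}(d_{m+1}\dbar_{n+1}\omega)=\overline{s}_{n-1}\sigma=d_m\dbar_{n+1}(d_{m-1}\omega)$, as in the Bott-Chern case. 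Finally we extend $\omega$ to $\Delta^{m+2,n+2}$.

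Let $\xi=d_m\dbar_n\omega$. We verify three things:
\begin{enumerate}[(1)]
\item $d_i\xi=*$ for $i<m-1$, $\dbar_j\xi=*$ for $j<n-1$, $d_{m-1}\xi=\overline{s}_{n-1}\alpha$ and $\dbar_{n-1}\xi=s_{m-1}\alpha$;
\item $d_{m+1}\dbar_{n+1}\xi=d_m\dbar_n(d_{m+2}\dbar_{n+2}\omega)=d_m\dbar_n\theta''=\eta$;
\item $\xi$ is therefore an admissible filler for the pair $(\alpha,\eta)$.
\end{enumerate}
It follows that
\[
[\alpha]\diamond([\beta]\diamond[\gamma])=[d_m\dbar_n\xi]=[d_m\dbar_n d_{m+1}\dbar_{n+1}\omega]=[d_m\dbar_n\theta']=([\alpha]\diamond[\beta])\diamond[\gamma].
\]

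The main obstacle is the anodyne statement for the modified domain, together with compatibility on the overlap $d_{m+1}\dbar_{n+1}\Delta\cap d_{m+2}\dbar_{n+2}\Delta$. On that overlap $\theta'$ and $\theta''$ must agree, and they must also restrict consistently with the Aeppli boundary conditions on the faces $d_m\dbar_n$. If this overlap compatibility fails for arbitrary choices, we would first choose $\theta''$, then build $\theta'$ by a relative filling, which \Cref{example-1} allows, so that the two agree there. This is possible precisely because of the freedom granted by \Cref{A-product-well-define}.
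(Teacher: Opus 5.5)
Your argument is correct. It is a variant of the paper's proof that differs only in how $\theta''$ enters.

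\emph{The paper's route.} It prescribes just $\theta$ (through $d_{m-1},\dbar_{n-1}$) and $\theta'$ (on $d_{m+1}\dbar_{n+1}$), so its domain is exactly that of \Cref{example-2}. It never chooses $\theta''$. Instead it sets $\theta'':=d_{m+2}\dbar_{n+2}\omega$ after extending, and notes that this face is automatically an admissible filler for $(\beta,\gamma)$.

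\emph{Your route.} You fix $\theta''$ in advance on the extra corner $d_{m+2}\dbar_{n+2}\Delta^{m+2,n+2}$. This costs an additional anodyne check and more overlap checks, but both succeed.

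\emph{The anodyne check.} The intersection of $d_{m+2}\Delta^{m+2,n+2}$ with your domain is $\Lambda^{m+1}_{m,m+1}\boxtimes\Delta^{n+2}\cup\Delta^{m+1}\boxtimes\Lambda^{n+2}_{n,n+1}\cup d_{m+1}\dbar_{n+1}\Delta^{m+1,n+2}$, and symmetrically for $\dbar_{n+2}$. This is not literally the shape of \Cref{example-1}, since the omitted vertical faces are not the last two. The same pushout trick still works: attach the two faces through the prescribed corner, and each meets the domain in a generating anodyne map. After attaching, you land on the domain used in the proof of \Cref{BC-product-associative}.

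\emph{The overlap check.} The compatibility you call the ``main obstacle'' is automatic. On $d_{m+1}\dbar_{n+1}\Delta^{m+2,n+2}\cap d_{m+2}\dbar_{n+2}\Delta^{m+2,n+2}$, both prescriptions restrict to $d_{m+1}\dbar_{n+1}\theta'=\gamma=d_{m+1}\dbar_{n+1}\theta''$. On the overlap with $d_{m-1}\Delta^{m+2,n+2}$, one has $d_{m+1}\dbar_{n+2}\overline{s}_{n-1}\theta=\overline{s}_{n-1}\beta=d_{m-1}\theta''$, and likewise for $\dbar_{n-1}$. The $d_m\dbar_n$ faces are not in the domain and impose nothing. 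So your fallback of re-choosing $\theta'$ is never needed.

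\emph{Comparison.} The paper's route is more economical: it uses only \Cref{example-2} and has no compatibility conditions involving $\theta''$. Yours mirrors the Bott--Chern proof more closely and treats $\eta$ as an input rather than an output.
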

\begin{proof}
	Given $\alpha,\beta,\gamma: \Delta^{m,n}\to X$ rel $\delta_\A\Delta^{m,n}$. Then following the definition we get $\theta$ with $\sigma=d_m\dbar_n\theta$ representing $[\alpha]\diamond[\beta]$, and $\theta'$ with $d_m\dbar_n\theta'$ representing $[\sigma]\diamond[\gamma]$. Using these we define a map
	\[
	\omega: (\Lambda^{m+2}_{<m}\boxtimes\Delta^{n+2}\cup\Delta^{m+2}\boxtimes\Lambda^{n+2}_{<n})\cup d_{m+1}\dbar_{n+1}\Delta^{m+2,n+2}\to X
	\]
	by $d_i\omega=*$, $i<m-1$; $\dbar_j\omega=*$, $j<n-1$; $d_{m-1}\omega=\overline{s}_{n-1}\theta$, $\dbar_{n-1}\omega=s_{m-1}\theta$ and $d_{m+1}\dbar_{n+1}\omega=\theta'$. One checks that $\omega$ is well-defined and (by \Cref{example-2}) extends to a map $\omega: \Delta^{m+2,n+2}\to X$.
	
	A direct computation shows that $\theta''=d_{m+2}\dbar_{n+2}\omega$ is a map that by definition defines $[\beta]\diamond[\gamma]$ such that $\eta=d_m\dbar_n \theta''$ represents $[\beta]\diamond[\gamma]$. Moreover one checks that $\xi=d_m\dbar_n\omega$ by definition defines $[\alpha]\diamond [\eta]$. The rest is the same as the proof of \Cref{BC-product-associative}.
\end{proof}

\begin{lemma}
	The Aeppli product $\diamond$ is right divisible. In particular $\diamond$ is right invertible.
\end{lemma}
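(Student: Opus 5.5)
The plan is to read ``right divisible'' as: for any $[\alpha],[\gamma]\in\pi^\A_{m,n}(X)$ there exists $[\beta]$ with $[\alpha]\diamond[\beta]=[\gamma]$. Taking $\gamma=*$ then gives a right inverse. I will run the construction of $\diamond$ backwards. In the definition of $[\alpha]\diamond[\beta]$ one prescribes $\theta$ on $\Gamma^{m+1,n+1}\cup d_{m+1}\dbar_{n+1}\Delta^{m+1,n+1}$ and reads off the product on the face $d_m\dbar_n$. Here I will instead prescribe $\theta$ on $\Gamma^{m+1,n+1}\cup d_m\dbar_n\Delta^{m+1,n+1}$ and read off $\beta$ on the face $d_{m+1}\dbar_{n+1}$.

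Concretely, given $\alpha,\gamma:\Delta^{m,n}\to X$ rel $\delta_\A\Delta^{m,n}$, define
\[
\theta:\Gamma^{m+1,n+1}\cup d_m\dbar_n\Delta^{m+1,n+1}\to X
\]
by the following prescriptions:
\begin{itemize}
\item $d_i\theta=*$ for $i<m-1$, and $\dbar_j\theta=*$ for $j<n-1$;
\item $d_{m-1}\theta=\overline{s}_{n-1}\alpha$ and $\dbar_{n-1}\theta=s_{m-1}\alpha$;
\item $d_m\dbar_n\theta=\gamma$.
\end{itemize}
I will check well-definedness on the overlaps using the simplicial identities, exactly as for the $\diamond$ construction. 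The new compatibilities are between $\gamma$ and the other faces, for instance $d_{m-1}\gamma=*$ versus $d_{m-1}\dbar_n(\overline{s}_{n-1}\alpha)=d_{m-1}\alpha=*$. These hold because $\alpha$ and $\gamma$ vanish on $\delta_\A\Delta^{m,n}$. By \Cref{example-1}, which covers precisely the inclusion $\Gamma^{m+1,n+1}\cup d_m\dbar_n\Delta^{m+1,n+1}\hookrightarrow\Delta^{m+1,n+1}$, the map $\theta$ extends to all of $\Delta^{m+1,n+1}$.

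Next, set $\beta=d_{m+1}\dbar_{n+1}\theta$ and verify that $\beta$ is rel $\delta_\A\Delta^{m,n}$. For $i<m-1$, $d_i\beta=d_m\dbar_{n+1}d_i\theta=*$. Next,
\[
d_{m-1}\beta=d_m\dbar_{n+1}\overline{s}_{n-1}\alpha=\overline{s}_{n-1}d_m\dbar_n\alpha=*,
\]
and symmetrically in the vertical direction. Finally,
\[
d_m\dbar_n\beta=d_m\dbar_n d_m\dbar_n\theta=d_m\dbar_n\gamma=*.
\]

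Now the extended $\theta$ is exactly an extension of the kind used to define $[\alpha]\diamond[\beta]$: it has the prescribed faces $*$ and $\overline{s}_{n-1}\alpha$, $s_{m-1}\alpha$, and $d_{m+1}\dbar_{n+1}\theta=\beta$. Its $d_m\dbar_n$-face is $\gamma$, so $[\alpha]\diamond[\beta]=[\gamma]$. This uses that the class is independent of the chosen extension, which is the case $h,g$ constant in \Cref{A-product-well-define}.

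The main obstacle is the bookkeeping in the well-definedness check on $\Gamma^{m+1,n+1}\cup d_m\dbar_n\Delta^{m+1,n+1}$, in particular the overlaps of $d_m\dbar_n\Delta^{m+1,n+1}$ with the faces $d_{m-1}$ and $\dbar_{n-1}$. Everything else is formal.
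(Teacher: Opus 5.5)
Your proposal is correct and is essentially the paper's proof: you prescribe $\theta$ on $\Gamma^{m+1,n+1}\cup d_m\dbar_n\Delta^{m+1,n+1}$ using the $\alpha$-faces and $d_m\dbar_n\theta=\gamma$, extend it via the anodyne inclusion of \Cref{example-1}, and read off $\beta=d_{m+1}\dbar_{n+1}\theta$. You also make explicit two points the paper leaves implicit: that $\beta$ is rel $\delta_\A\Delta^{m,n}$, and that the class of $d_m\dbar_n\theta$ does not depend on the chosen extension.
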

\begin{proof}
	Given $\alpha,\beta:\Delta^{m,n}\to X$ rel $\delta_\A\Delta^{m,n}$. We define a map
	\[
	\theta: \Gamma^{m+1,n+1}\cup d_m\dbar_n\Delta^{m+1,n+1}\to X
	\] by $d_i\theta=*$, $0\le i<m-1$; $\dbar_j\theta=*$, $0\le j<n-1$; $d_{m-1}\theta=\overline{s}_{n-1}\alpha$, $\dbar_{n-1}=s_{m-1}\alpha$; and $d_{m}\dbar_{n}\theta=\beta$. Then $\theta$ is well-defined and (by \Cref{example-1}) extends to a map $\theta: \Delta^{m+1,n+1}\to X$. By definition we have
	\[
	[\alpha]\diamond [d_{m+1}\dbar_{n+1}\theta]=[\beta]
	\]
	proving that $\diamond$ is right divisible.
\end{proof}

\begin{theorem} Let $(X,*)$ be a pointed fibrant bisimplicial set.
	For $m\ge 1, n\ge 1$, the Aeppli product $\diamond$ turns $\pi_{m,n}^\A(X,*)$ into a group.
\end{theorem}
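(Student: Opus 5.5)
The theorem is a purely algebraic consequence of the lemmas just established, and I would deduce it without further geometric constructions. By \Cref{A-product-well-define}, $\diamond$ is a well-defined binary operation on $\pi^\A_{m,n}(X,*)$. By the preceding lemmas it is associative and has the class $[*]$ as a two-sided unit, so $(\pi^\A_{m,n}(X,*),\diamond)$ is a monoid. The last lemma says $\diamond$ is right divisible: for all $a,b$ there is $c$ with $a\diamond c=b$. Taking $b=[*]$, every element $a$ therefore has a right inverse $a^{r}$ with $a\diamond a^{r}=[*]$.

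It remains to invoke the standard fact that a monoid in which every element has a right inverse is a group. Given $a$, pick $a^r$ with $a\diamond a^r=e$, and then $(a^r)^r$ with $a^r\diamond (a^r)^r=e$. Then
\[
a = a\diamond e = a\diamond\bigl(a^r\diamond (a^r)^r\bigr) = (a\diamond a^r)\diamond (a^r)^r = (a^r)^r,
\]
so $a^r\diamond a = a^r\diamond (a^r)^r = e$. Hence $a^r$ is a two-sided inverse, and $\pi^\A_{m,n}(X,*)$ is a group.

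There is essentially no obstacle in this step, since all the homotopical work sits in the earlier lemmas: well-definedness via \Cref{pushout-product}, and associativity and divisibility via \Cref{example-1} and \Cref{example-2}. The one point to check is that the unit used in the divisibility argument is the same class $[*]$ that was shown to be the two-sided unit. This holds because the constant map represents $[*]$ in the Aeppli homotopy set, just as in \Cref{BC-product-unit}.
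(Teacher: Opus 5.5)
Your proposal is correct and follows the paper's proof: the paper also deduces the theorem from the preceding lemmas (well-definedness, two-sided unit $[*]$, associativity, right divisibility) and then uses the fact that a monoid in which every element has a right inverse is a group. The only difference is that you write out the short algebraic argument for that fact, which the paper merely states.
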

\begin{proof}
	This follows from the previous lemmas. We note that a monoid whose multiplication is right invertible is in fact a group.
\end{proof}

\subsection{Relation between Aeppli and Bott-Chern}
The goal of this subsection is to establish that $\pi_{m,n}^\BC(\Omega X)\cong \pi^\A_{m+1,n+1}(X)$. As usual we fix a pointed fibrant bisimplicial set $(X,*)$.

To start with, we construct a map $\rho: \pi^\A_{m+1,n+1}(X)\to \pi_{m,n}^\BC(\Omega X)$ as follows. Given $\alpha: \Delta^{m+1,n+1}\to X$ rel $\delta_\A\Delta^{m+1,n+1}$, consider the following lifting problem
\[
\begin{tikzcd}
	\Lambda^{m+1,n+1}\ar[r,"*"]\ar[d] & PX\ar[d,"\mathrm{ev}"]\\
	\Delta^{m+1,n+1}\ar[r,"\alpha"]\ar[ur,"\widetilde{\alpha}", dotted] & X
\end{tikzcd}
\]
where $*: \Lambda^{m+1,n+1}\to PX$ is the constant map. Then since $\mathrm{ev}: PX\to X$ is a fibration and $\Lambda^{m+1,n+1}\hookrightarrow\Delta^{m+1,n+1}$ is anodyne, the dotted map $\widetilde{\alpha}$ exists. 

It is straightforward to verify that $\rho_\alpha:=d_{m+1}\dbar_{n+1}\widetilde{\alpha} $ maps into $\Omega X$, and that $\rho_\alpha|_{\delta\Delta^{m,n}}$ is constant. Therefore $\rho_\alpha$ defines a class $[\rho_\alpha]\in \pi_{m,n}^\BC(\Omega X)$.

\begin{lemma}
	The class $[\rho_\alpha]\in \pi_{m,n}^\BC(\Omega X)$ depends only on the class $[\alpha]\in \pi_{m+1,n+1}^\A(X)$. 
\end{lemma}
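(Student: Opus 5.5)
Two independent choices enter the construction: the lift $\widetilde{\alpha}$ and the representative $\alpha$ of its Aeppli class. I would handle both with one relative lifting problem against the fibration $\mathrm{ev}: PX\to X$, in the same style as the proof of \Cref{BC-product-well-define}. Let $\alpha\sim_h\alpha'$ be a homotopy rel $\delta_\A\Delta^{m+1,n+1}$, and let $\widetilde{\alpha},\widetilde{\alpha}'$ be arbitrary lifts. The case $\alpha=\alpha'$, $h$ constant, handles the dependence on the lift alone.

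Using the convention of \Cref{interval-direction}, I would write down a partial map
\[
\widetilde{\omega}: \Lambda^{m+1,n+1}\times\Delta^{1,1}\cup \Delta^{m+1,n+1}\times\delta_\A\Delta^{1,1}\to PX
\]
by the following assignments:
\begin{itemize}
\item $\widetilde{\omega}=*$ on $\Lambda^{m+1,n+1}\times\Delta^{1,1}$;
\item $\hat d_0\widetilde{\omega}=\hat{\overline{s}}_0\widetilde{\alpha}$ and $\hat{\dbar}_0\widetilde{\omega}=\hat s_0\widetilde{\alpha}$, which together cover $\Delta^{m+1,n+1}\times\Lambda^{1,1}$;
\item $\hat d_1\hat{\dbar}_1\widetilde{\omega}=\widetilde{\alpha}'$.
\end{itemize}
Compatibility on overlaps is checked as before. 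On $\Lambda^{m+1,n+1}$ both $\widetilde\alpha$ and $\widetilde\alpha'$ are $*$, and the degeneracies of $*$ are $*$.

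This partial map sits over the map $h: \Delta^{m+1,n+1}\times\Delta^{1,1}\to X$, via $\mathrm{ev}$. Indeed, $\mathrm{ev}\circ\widetilde\alpha=\alpha$, $\mathrm{ev}\circ\widetilde\alpha'=\alpha'$, and $h$ is constant on $\Lambda^{m+1,n+1}\times\Delta^{1,1}$, since $\Lambda^{m+1,n+1}\subset\delta_\A\Delta^{m+1,n+1}$. The inclusion is the pushout-product of the anodyne map $\Lambda^{m+1,n+1}\hookrightarrow\Delta^{m+1,n+1}$ with the monomorphism $\delta_\A\Delta^{1,1}\hookrightarrow\Delta^{1,1}$, so it is anodyne by \Cref{pushout-product}. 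Since $\mathrm{ev}$ is a fibration, we obtain a lift $\widetilde{\omega}:\Delta^{m+1,n+1}\times\Delta^{1,1}\to PX$ over $h$.

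Set $H=d_{m+1}\dbar_{n+1}\widetilde{\omega}:\Delta^{m,n}\times\Delta^{1,1}\to PX$. Its hatted faces are $\hat d_0 H=\hat{\overline{s}}_0\rho_\alpha$, $\hat{\dbar}_0H=\hat s_0\rho_\alpha$ and $\hat d_1\hat{\dbar}_1H=\rho_{\alpha'}$, so $H$ has the shape of a homotopy $\rho_\alpha\sim\rho_{\alpha'}$.

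Two facts remain to be checked.
\begin{enumerate}
\item \textbf{$H$ lands in $\Omega X$.} Because $\Omega X$ is the pullback of $\mathrm{ev}$ along $*$, it suffices to check $\mathrm{ev}\circ H=*$. This composite equals $d_{m+1}\dbar_{n+1}h$, which is $*$ because $d_{m+1}\dbar_{n+1}\Delta^{m+1,n+1}\subset\delta_\A\Delta^{m+1,n+1}$ and $h$ is a relative homotopy.
\item \textbf{$H$ is constant on $\delta\Delta^{m,n}\times\Delta^{1,1}$.} For $i\le m$ we have $d_iH=d_{m}\dbar_{n+1}d_i\widetilde\omega$, and $d_i\Delta^{m+1,n+1}\subset\Lambda^{m+1,n+1}$ for $i\le m$, so this is $*$. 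The vertical faces $\dbar_j$ with $j\le n$ are handled the same way.
\end{enumerate}

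Thus $H$ is a homotopy $\rho_\alpha\sim\rho_{\alpha'}$ in $\Omega X$ relative to $\delta\Delta^{m,n}$, and $[\rho_\alpha]$ depends only on $[\alpha]$.

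The main obstacle is the bookkeeping: checking that the prescribed partial map is well defined on overlaps such as $(d_0\Delta^{m+1,n+1})\times\hat d_0\Delta^{1,1}$, and that the face identities commute as claimed. This should follow routinely from the commutation of the two families of structure maps, exactly as in the earlier lemmas.
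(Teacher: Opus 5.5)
Your proposal is correct and matches the paper's approach: the paper's proof only says to lift the homotopy against $\mathrm{ev}$ along the anodyne extension $\Lambda^{m+1,n+1}\times\Delta^{1,1}\cup\Delta^{m+1,n+1}\times\delta_\A\Delta^{1,1}\to\Delta^{m+1,n+1}\times\Delta^{1,1}$ (via \Cref{pushout-product}) and leaves the details to the reader. You supply exactly those details: compatibility of the partial lift over $h$, the check that $d_{m+1}\dbar_{n+1}\widetilde{\omega}$ lands in $\Omega X$ and is trivial on $\delta\Delta^{m,n}\times\Delta^{1,1}$, and the case $\alpha=\alpha'$ giving independence of the lift.
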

\begin{proof}
	The proof is similar to \Cref{BC-product-well-define} and \Cref{A-product-well-define}. One shows that homotopy can be lifted along the anodyne extension $\Lambda^{m+1,n+1}\times\Delta^{1,1}\cup \Delta^{m+1,n+1}\times\delta_\A\Delta^{1,1}\to \Delta^{m+1,n+1}\times\Delta^{1,1}$ (recall \Cref{pushout-product}). We leave the details to the reader.
\end{proof}

Therefore we can define a map
\[
\rho: \pi^\A_{m+1,n+1}(X)\to \pi_{m,n}^\BC(\Omega X), \quad [\alpha]\mapsto [\rho_\alpha].
\]
Similarly we can construct a map $\mu: \pi_{m,n}^\BC(\Omega X)\to \pi^\A_{m+1,n+1}(X)$ as follows. Given $\alpha: \Delta^{m,n}\to \Omega X$ rel $\delta\Delta^{m,n}$, consider $\iota\alpha$ where $\iota: \Omega X\to PX$ is the canonical embedding. Notice that $PX$ is contractible, therefore there exists a map $\theta: \Delta^{m+1,n+1}\to PX$ such that $d_i\theta=*$, $0\le i\le m$; $\dbar_j\theta=*$, $0\le j\le n$; and $d_{m+1}\dbar_{n+1}\theta=\iota\alpha$.

It is now straightforward to verify that $\mu_\alpha=\mathrm{ev}\circ \theta: \Delta^{m+1,n+1}\to X$ is constant on $\delta_\A\Delta^{m+1,n+1}$. Therefore $\mu_\alpha$ defines a class $[\mu_\alpha]\in \pi_{m+1,n+1}^\A(X)$. 
\begin{lemma}
	The class $[\mu_\alpha]\in \pi_{m+1,n+1}^\A(X)$ depends only on the class $[\alpha]\in \pi_{m,n}^\BC(\Omega X)$. 
\end{lemma}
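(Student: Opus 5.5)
There are two sources of choice in the construction of $\mu_\alpha$: the filler $\theta$ and the representative $\alpha$. I will handle both at once with a single relative lifting problem in the contractible space $PX$, exactly parallel to \Cref{BC-product-well-define} and \Cref{A-product-well-define}.

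Suppose $\alpha\sim_h\alpha'$ rel $\delta\Delta^{m,n}$ as maps into $\Omega X$. Let $\theta,\theta':\Delta^{m+1,n+1}\to PX$ be fillers as in the construction for $\alpha$ and $\alpha'$ respectively. Let $B=\Delta^{m+1,n+1}\times\Delta^{1,1}$ and consider the subobject
\[
A=\bigl(\delta\Delta^{m+1,n+1}\times\Delta^{1,1}\bigr)\cup\bigl(\Delta^{m+1,n+1}\times\delta\Delta^{1,1}\bigr).
\]
I define $\omega:A\to PX$ as follows.
\begin{itemize}
\item On $d_i$-faces with $i\le m$ and on $\dbar_j$-faces with $j\le n$ (times $\Delta^{1,1}$), set $\omega=*$.
\item On $d_{m+1}\Delta^{m+1,n+1}\times\Delta^{1,1}$ and on $\dbar_{n+1}\Delta^{m+1,n+1}\times\Delta^{1,1}$, the intended values are not given directly. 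I therefore first reduce to the domain $\delta_\A$-type subobject $\Lambda^{m+1,n+1}\cup d_{m+1}\dbar_{n+1}\Delta^{m+1,n+1}$, prescribing $\omega=\iota h$ on $d_{m+1}\dbar_{n+1}\Delta^{m+1,n+1}\times\Delta^{1,1}$.
\item In the $\Delta^{1,1}$-direction, set $\hat d_0\omega=\hat{\overline{s}}_0\theta$, $\hat\dbar_0\omega=\hat s_0\theta$ and $\hat d_1\hat\dbar_1\omega=\theta'$.
\end{itemize}
The compatibilities follow from $h$ being a homotopy from $\alpha$ to $\alpha'$ rel $\delta\Delta^{m,n}$ and from the face conditions on $\theta$ and $\theta'$. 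So $\omega$ is defined on
\[
(\Lambda^{m+1,n+1}\cup d_{m+1}\dbar_{n+1}\Delta^{m+1,n+1})\times\Delta^{1,1}\cup\Delta^{m+1,n+1}\times\delta_\A\Delta^{1,1}.
\]
The inclusion of this object into $\Delta^{m+1,n+1}\times\Delta^{1,1}$ is a monomorphism, and $PX$ is contractible. Hence $PX\to *$ is a trivial fibration, and $\omega$ extends to $\Delta^{m+1,n+1}\times\Delta^{1,1}$. Contractibility is what lets us use a non-anodyne inclusion here, and that is the key point.

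Composing with $\mathrm{ev}:PX\to X$ gives $H=\mathrm{ev}\circ\omega:\Delta^{m+1,n+1}\times\Delta^{1,1}\to X$. By the $\Delta^{1,1}$-face conditions, $H$ is a homotopy from $\mu_\alpha$ to $\mu_{\alpha'}$. It remains to check that $H$ is constant on $\delta_\A\Delta^{m+1,n+1}\times\Delta^{1,1}$.
\begin{itemize}
\item On $\Lambda^{m+1,n+1}\times\Delta^{1,1}$ this is immediate, since $\omega=*$ there.
\item On $d_{m+1}\dbar_{n+1}\Delta^{m+1,n+1}\times\Delta^{1,1}$ we have $\mathrm{ev}\circ\iota h=*$, because $h$ lands in $\Omega X$, which is the fiber of $\mathrm{ev}$ over the basepoint.
\end{itemize}
Thus $\mu_\alpha\sim\mu_{\alpha'}$ rel $\delta_\A\Delta^{m+1,n+1}$. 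Taking $h$ to be the constant homotopy shows independence of the choice of $\theta$.

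The main obstacle I expect is bookkeeping the well-definedness of $\omega$ on overlaps. The delicate checks are:
\begin{itemize}
\item that $\hat{\overline{s}}_0\theta$ and $\hat s_0\theta$ agree with $\iota h$ on $d_{m+1}\dbar_{n+1}$, which uses $d_0h=\overline{s}_0\alpha$ and $\dbar_0h=s_0\alpha$;
\item that $\theta'$ restricts to $\hat d_1\hat\dbar_1 h=\alpha'$.
\end{itemize}
I also need to confirm that $\mathrm{ev}$ of the faces $d_i$ ($i\le m$) and $\dbar_j$ ($j\le n$) being $*$ implies that all of $\Lambda^{m+1,n+1}$ maps to $*$. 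This is the same verification already used to see that $\mu_\alpha$ is constant on $\delta_\A\Delta^{m+1,n+1}$.
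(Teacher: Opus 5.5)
Your proposal is correct and matches the paper's proof. You use the same partial map $\omega$ into $PX$: trivial on $\Lambda^{m+1,n+1}\times\Delta^{1,1}$, equal to $\iota h$ on $d_{m+1}\dbar_{n+1}\Delta^{m+1,n+1}\times\Delta^{1,1}$, and given by $\theta,\theta'$ at the two ends of $\Delta^{1,1}$; you then extend it and compose with $\mathrm{ev}$. You also make explicit two points the paper's sketch leaves implicit: the extension needs the contractibility of $PX$, because the inclusion is not anodyne, and taking $h$ to be the constant homotopy shows the class does not depend on the choice of filler $\theta$.
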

\begin{proof}
	We sketch the proof. Given a homotopy $\alpha\sim_h\beta$, we can find $\theta_\alpha$ and $\theta_\beta$ as above and then construct a map
	\[
	\omega: \Delta^{m+1,n+1}\times\Delta^{1,1}\to PX
	\]
	such that $d_i\omega=\dbar_j\omega=*$ for $i\le m, j\le n$ and that $d_{m+1}\dbar_{n+1}\omega=\iota h$ and $\hat{d}_0\omega=\hat{\overline{s}}_0\theta_\alpha$, $\hat{\dbar}_0\omega=\hat{s}_0\theta_\alpha$, $\hat{d}_1\hat{\dbar}_1\omega=\theta_\beta$. Then $\mathrm{ev}\circ\omega$ gives the desired homotopy.
\end{proof}

\begin{proposition} Let $(X,*)$ be a pointed fibrant bisimplicial set.
	For $m\ge 0$ and $n\ge 0$, the map $\rho: \pi^\A_{m+1,n+1}(X)\to \pi_{m,n}^\BC(\Omega X)$ is a bijection with inverse $\mu$.
\end{proposition}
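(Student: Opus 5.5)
The plan is to verify $\mu\circ\rho=\mathrm{id}$ and $\rho\circ\mu=\mathrm{id}$ directly from the constructions. The key input is that both constructions admit a common lift. In each case the auxiliary map $\widetilde{\alpha}$ (for $\rho$) or $\theta$ (for $\mu$) is a map $\Delta^{m+1,n+1}\to PX$ that is constant on the faces $d_i$, $i\le m$, and $\dbar_j$, $j\le n$. Such a map is subject to two readings: its composite with $\mathrm{ev}$ is an Aeppli representative, and its face $d_{m+1}\dbar_{n+1}$ lies in $\Omega X$ and is a Bott-Chern representative. We are free to use whichever choices of lifts we like, since the two preceding lemmas show the classes are independent of the choices (in particular of the lift: any two lifts give homotopic results by the same lifting argument).

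For $\mu\circ\rho$, start with $\alpha$ rel $\delta_\A\Delta^{m+1,n+1}$ and a lift $\widetilde{\alpha}$ with $\widetilde{\alpha}|_{\Lambda^{m+1,n+1}}=*$ and $\mathrm{ev}\circ\widetilde{\alpha}=\alpha$. Set $\rho_\alpha=d_{m+1}\dbar_{n+1}\widetilde{\alpha}$. To compute $\mu_{\rho_\alpha}$ we need some $\theta:\Delta^{m+1,n+1}\to PX$ with $d_i\theta=*$, $\dbar_j\theta=*$ for $i\le m$, $j\le n$, and $d_{m+1}\dbar_{n+1}\theta=\iota\rho_\alpha$. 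The choice $\theta=\widetilde{\alpha}$ satisfies all of these conditions. We therefore only need that the class of $\mu$ is independent of the choice of $\theta$. Granting that, $\mu_{\rho_\alpha}=\mathrm{ev}\circ\widetilde{\alpha}=\alpha$.

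The independence of $\mu$ from $\theta$ follows from the sketch of the previous lemma applied with the constant homotopy $h$. For two choices $\theta,\theta'$ we extend along $\Lambda^{m+1,n+1}\times\Delta^{1,1}\cup\Delta^{m+1,n+1}\times\delta_\A\Delta^{1,1}$. On $d_{m+1}\dbar_{n+1}\Delta^{m+1,n+1}\times\Delta^{1,1}$ we take the degenerate homotopy. We take $\theta$ on the $\hat d_0,\hat\dbar_0$ faces and $\theta'$ on $\hat d_1\hat\dbar_1$. The source here is $\Gamma^{m+1,n+1}\cup d_{m+1}\dbar_{n+1}$ crossed with $\Delta^{1,1}$, union $\Delta^{m+1,n+1}\times\delta_\A\Delta^{1,1}$, and the extension exists by \Cref{pushout-product} together with \Cref{example-1}, since $PX$ is fibrant. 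Composing with $\mathrm{ev}$ then gives a homotopy $\mathrm{ev}\,\theta\sim\mathrm{ev}\,\theta'$ rel $\delta_\A\Delta^{m+1,n+1}$.

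For $\rho\circ\mu$, start with $\alpha$ in $\Omega X$ and $\theta$ as in the definition of $\mu$. The map $\theta$ is itself a solution of the lifting problem defining $\rho_{\mu_\alpha}$: it vanishes on $\Lambda^{m+1,n+1}$ and lies over $\mu_\alpha=\mathrm{ev}\circ\theta$. Hence $\rho_{\mu_\alpha}=d_{m+1}\dbar_{n+1}\theta=\alpha$, using again that $\rho$ is independent of the chosen lift. That independence holds because $\Lambda^{m+1,n+1}\times\Delta^{1,1}\cup\Delta^{m+1,n+1}\times\delta_\A\Delta^{1,1}\hookrightarrow\Delta^{m+1,n+1}\times\Delta^{1,1}$ is anodyne and $\mathrm{ev}$ is a fibration, so any two lifts are joined by a homotopy over the constant homotopy of $\alpha$.

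The main obstacle is the bookkeeping in these independence-of-choice statements, and in particular checking that the prescribed boundary data on the relevant subobjects of $\Delta^{m+1,n+1}\times\Delta^{1,1}$ agree on overlaps. Everything else is a tautology once the lifts are reused. The case $m=0$ or $n=0$ requires no change, because the anodyne inclusions used only involve the horns $\Lambda^{m+1}_{m+1}$ and $\Lambda^{n+1}_{n+1}$ with $m+1,n+1\ge1$.
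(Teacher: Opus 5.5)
Your treatment of both composites matches the paper's. For $\mu\circ\rho$ you take $\theta=\widetilde{\alpha}$ in the definition of $\mu$. For $\rho\circ\mu$ you use $\theta$ itself as the lift of $\mu_\alpha$ in the definition of $\rho$. Both rely on independence of choices. Your justification of that independence for $\rho$ is correct for all $m,n\ge 0$: you lift against the fibration $\mathrm{ev}$ along the anodyne inclusion $\Lambda^{m+1,n+1}\times\Delta^{1,1}\cup\Delta^{m+1,n+1}\times\delta_\A\Delta^{1,1}\hookrightarrow\Delta^{m+1,n+1}\times\Delta^{1,1}$.

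The step that does not work as written is your extension argument for the independence of $\mu$ from $\theta$.
\begin{itemize}
\item For $\mathrm{ev}\circ\omega$ to be a homotopy rel $\delta_\A\Delta^{m+1,n+1}$, $\omega$ must be $*$ on all of $\Lambda^{m+1,n+1}\times\Delta^{1,1}$. This includes $d_m\Delta^{m+1,n+1}\times\Delta^{1,1}$ and $\dbar_n\Delta^{m+1,n+1}\times\Delta^{1,1}$. So the source must be $\delta_\A\Delta^{m+1,n+1}\times\Delta^{1,1}\cup\Delta^{m+1,n+1}\times\delta_\A\Delta^{1,1}$.
\item Neither $\delta_\A\Delta^{m+1,n+1}\hookrightarrow\Delta^{m+1,n+1}$ nor $\delta_\A\Delta^{1,1}\hookrightarrow\Delta^{1,1}$ is anodyne. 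Otherwise the Aeppli homotopy sets $\pi^\A_{m+1,n+1}$ would always be trivial, resp.\ $\Omega X$ would always be contractible. Hence \Cref{pushout-product} says nothing about this inclusion, and fibrancy of $PX$ is not enough.
\item Your stated source replaces $\Lambda^{m+1,n+1}$ by $\Gamma^{m+1,n+1}$. That inclusion is indeed anodyne via \Cref{example-1}, but only for $m,n\ge 1$, so your closing remark that $m=0$ or $n=0$ needs no change fails for this step.
\item Even when $m,n\ge1$, the $d_m$ and $\dbar_n$ faces of $\omega$ are then unconstrained, so $\mathrm{ev}\circ\omega$ is no longer rel $\delta_\A\Delta^{m+1,n+1}$.
\end{itemize}

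The ingredient you need is that $PX$ is contractible, i.e.\ $PX\to *$ has the right lifting property against every monomorphism. This gives the extension along the full source for all $m,n\ge 0$, and it is what underlies the paper's sketch that $[\mu_\alpha]$ is well defined. With that substitution, or simply by citing that lemma as the paper does, your proof goes through.
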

\begin{proof}
	Given $\alpha: \Delta^{m+1,n+1}\to X$ rel $\delta_\A\Delta^{m+1,n+1}$, we observe that $\widetilde{\alpha}$ satisfies that $d_i\widetilde{\alpha}=*$, $0\le i\le m$; $\dbar_j\widetilde{\alpha}=*$, $0\le j\le n$; and $d_{m+1}\dbar_{n+1}\widetilde{\alpha}=\iota\rho_\alpha$. Therefore by definition we have $\mu\circ\rho[\alpha]=[\mathrm{ev}\circ \widetilde{\alpha}]=[\alpha]$. This proves that $\mu\circ \rho=\mathrm{id}$. Similarly one can prove that $\rho\circ \mu=\mathrm{id}$.
\end{proof}

Next we analyze the monoid products. The first observation is that there is another product on $\pi_{m,n}^\BC(\Omega X)$ induced by``composing loops". Indeed maps $\Delta^{m,n}\to \Omega X$ rel $\delta\Delta^{m,n}$ can be viewed as maps
\[
\Delta^{1,1}\to \Fun(\Delta^{m,n},X)\times_{\Fun(\delta\Delta^{m,n}, X)}\Fun(\delta\Delta^{m,n}, *)\ \text{rel}\ \delta_\A\Delta^{1,1}.
\]
Therefore one can follow the construction of Aeppli product to define an operation $\diamond$ on $\pi_{m,n}^\BC(\Omega X)$.
\begin{lemma}
	For $m\ge 0$, $n\ge 0$, the composition of loops operation $\diamond$ turns $\pi_{m,n}^\BC(\Omega X)$ into a group.
\end{lemma}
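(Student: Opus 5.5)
The plan is to reduce the statement to the already proven theorem that Aeppli products make $\pi_{1,1}^\A$ of a pointed fibrant bisimplicial set into a group. Set
\[
Y:=\Fun(\Delta^{m,n},X)\times_{\Fun(\delta\Delta^{m,n},X)}\Fun(\delta\Delta^{m,n},*),
\]
pointed by the constant map. By \Cref{fibration-inherent}, the restriction $\Fun(\Delta^{m,n},X)\to\Fun(\delta\Delta^{m,n},X)$ is a fibration. $Y$ is its pullback along the constant map, so $Y$ is fibrant.

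By the exponential law, a map $\Delta^{m,n}\to\Omega X$ rel $\delta\Delta^{m,n}$ is the same thing as a map $\Delta^{m,n}\times\Delta^{1,1}\to X$ that is $*$ on $\delta\Delta^{m,n}\times\Delta^{1,1}\cup\Delta^{m,n}\times\delta_\A\Delta^{1,1}$. Equivalently, it is a map $\Delta^{1,1}\to Y$ rel $\delta_\A\Delta^{1,1}$. This identifies the representatives of $\pi_{m,n}^\BC(\Omega X)$ with those of $\pi_{1,1}^\A(Y)$.

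Next I need to check that the two homotopy relations agree under this identification. A relative homotopy of maps $\Delta^{m,n}\to\Omega X$ rel $\delta\Delta^{m,n}$ is a map
\[
\Delta^{m,n}\times\Delta^{1,1}_{\text{loop}}\times\Delta^{1,1}_{\text{htpy}}\to X
\]
with the appropriate boundary conditions. A relative homotopy of maps $\Delta^{1,1}\to Y$ rel $\delta_\A\Delta^{1,1}$ is the same kind of data with the roles of the two $\Delta^{1,1}$ factors labeled differently. Both are constant on the same subobjects, and the face identities $d_0h=\overline{s}_0\alpha$, $\dbar_0h=s_0\alpha$, $d_1\dbar_1h=\beta$ are imposed in the homotopy direction in both cases. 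So the two relations coincide, and we get a bijection $\pi_{m,n}^\BC(\Omega X)\cong\pi_{1,1}^\A(Y)$.

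By construction, the loop-composition operation $\diamond$ on $\pi_{m,n}^\BC(\Omega X)$ is the Aeppli product on $\pi_{1,1}^\A(Y)$ transported along this bijection. The Aeppli product theorem applies with bidegree $(1,1)$, which satisfies $m,n\ge1$ in that theorem's indexing. It shows that $\pi_{1,1}^\A(Y)$ is a group under $\diamond$: $\diamond$ is well defined by \Cref{A-product-well-define}, and it is unital, associative and right divisible. Transporting the structure gives the claim for every $m,n\ge0$.

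The main obstacle is the bookkeeping in the second step, namely that the homotopy relation defining $\pi^\BC_{m,n}(\Omega X)$ literally matches the one defining $\pi^\A_{1,1}(Y)$. The issue is that "homotopy" is an asymmetric notion in the $\Delta^{1,1}$ variable, and the exponential law involves the symmetry of the product $\Delta^{m,n}\times\Delta^{1,1}\times\Delta^{1,1}$. I would handle this by writing both relations explicitly as maps out of this triple product and observing that the extra $\Delta^{1,1}$ coordinate plays the homotopy role in both descriptions. The loop coordinate is only required to be constant on $\delta_\A\Delta^{1,1}$, and the $\Delta^{m,n}$ coordinate only on $\delta\Delta^{m,n}$.
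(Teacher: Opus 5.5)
Your proposal is correct and follows the paper's approach. The paper likewise views maps $\Delta^{m,n}\to\Omega X$ rel $\delta\Delta^{m,n}$ as maps $\Delta^{1,1}\to Y$ rel $\delta_\A\Delta^{1,1}$, and gets the group structure by rerunning the Aeppli-product argument, which is what you do by applying the $\pi^\A_{1,1}$ group theorem to the fibrant $Y$; your check that the two relative homotopy relations agree (both put the homotopy faces on the extra $\Delta^{1,1}$ factor) spells out a step the paper leaves implicit.
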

\begin{proof}
	This is the same as the proof of that the Aeppli product is a group operation.
\end{proof}

\begin{proposition}
	For $m\ge 1$, $n\ge 1$, the operations $\star$ and $\diamond$ on $\pi_{m,n}^\BC(\Omega X)$ coincide, and are abelian.
\end{proposition}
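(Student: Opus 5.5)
We plan to use an Eckmann--Hilton argument. The two operations $\star$ and $\diamond$ on $\pi_{m,n}^\BC(\Omega X)$ share the unit $[*]$. By \Cref{BC-product-unit} it is a two-sided unit for $\star$, and by the unit lemma for the Aeppli product (applied in the $\Delta^{1,1}$-direction) it is a two-sided unit for $\diamond$. It will therefore suffice to prove the interchange law
\[
([\alpha]\diamond[\beta])\star([\gamma]\diamond[\delta]) = ([\alpha]\star[\gamma])\diamond([\beta]\star[\delta])
\]
for all $\alpha,\beta,\gamma,\delta:\Delta^{m,n}\to\Omega X$ rel $\delta\Delta^{m,n}$. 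The standard Eckmann--Hilton computation then shows that $\star=\diamond$ and that this common operation is commutative. Since $\diamond$ is a group operation by the preceding lemma, we obtain abelian groups.

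To prove the interchange law, we regard a map $\Delta^{m,n}\to\Omega X$ rel $\delta\Delta^{m,n}$ as a map $\Delta^{m,n}\times\Delta^{1,1}\to X$. This map is trivial on $\delta\Delta^{m,n}\times\Delta^{1,1}\cup\Delta^{m,n}\times\delta_\A\Delta^{1,1}$. The product $\star$ is computed by a filler
\[
\theta:\Delta^{m+1,n+1}\times\Delta^{1,1}\to X
\]
in the $\Delta^{m,n}$-direction. The product $\diamond$ is computed by a filler
\[
\phi:\Delta^{m,n}\times\Delta^{2,2}\to X
\]
in the loop direction, extending data on $\Gamma^{2,2}\cup \hat d_2\hat\dbar_2\Delta^{2,2}$ via \Cref{example-1}.

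The key step is to build a single map
\[
\Omega:\Delta^{m+1,n+1}\times\Delta^{2,2}\to X
\]
that performs both operations at once. We prescribe it on the subobject
\[
(\Lambda^{m+1}_m\boxtimes\Delta^{n+1}\cup\Delta^{m+1}\boxtimes\Lambda^{n+1}_n)\times\Delta^{2,2}\ \cup\ \Delta^{m+1,n+1}\times(\Gamma^{2,2}\cup\hat d_2\hat\dbar_2\Delta^{2,2}),
\]
which is anodyne in $\Delta^{m+1,n+1}\times\Delta^{2,2}$ by \Cref{pushout-product}. On the face $d_{m-1}$ (resp.\ $\dbar_{n-1}$) we place $\overline{s}_{n-1}\phi_{\alpha,\beta}$ (resp.\ $s_{m-1}\phi_{\alpha,\beta}$), where $\phi_{\alpha,\beta}$ is a $\diamond$-filler for $\alpha,\beta$. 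On $d_{m+1}$ (resp.\ $\dbar_{n+1}$) we place $\overline{s}_n\phi_{\gamma,\delta}$ (resp.\ $s_m\phi_{\gamma,\delta}$). On $\hat d_1$ and $\hat\dbar_1$ we place the appropriate degeneracies of a $\star$-filler $\theta_{\alpha,\gamma}$. On $\hat d_2\hat\dbar_2$ we place a $\star$-filler $\theta_{\beta,\delta}$. All remaining faces are sent to $*$.

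After extending, we restrict to $d_m\dbar_n$ and to $\hat d_1\hat\dbar_1$ in the two orders. Since these face operators commute, $d_m\dbar_n\hat d_1\hat\dbar_1\Omega$ equals $\hat d_1\hat\dbar_1 d_m\dbar_n\Omega$. The first description represents the left side of the interchange law, because $d_m\dbar_n\Omega$ is a $\star$-filler for the two $\diamond$-products. The second represents the right side, because $\hat d_1\hat\dbar_1\Omega$ is a $\diamond$-filler for the two $\star$-products. \Cref{BC-product-well-define} and \Cref{A-product-well-define} ensure that any fillers compute the classes, so this equality gives the interchange law.

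The main obstacle is the bookkeeping needed to check that the prescribed map on the subobject is well defined. This amounts to compatibility on all pairwise intersections of faces in both the $\Delta^{m+1,n+1}$- and $\Delta^{2,2}$-directions. In particular, the $\hat{}$-faces of $\phi$ must match the $\Delta$-faces of $\theta$ on the overlaps. If the naive choice of which loop-direction faces to prescribe fails to be compatible, we would first try using the Gabriel--Zisman generators (5) to find a different anodyne subobject. Failing that, we would split the argument into two steps, each interchanging only one pair of operands, and use associativity and the unit lemmas to conclude.
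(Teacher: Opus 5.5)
This is essentially the paper's own proof. It uses the same Eckmann--Hilton interchange, realized by a single map on $\Delta^{m+1,n+1}\times\Delta^{2,2}$ that is prescribed on exactly the anodyne subobject you name and read off at $d_m\dbar_n\hat{d}_1\hat{\dbar}_1$.

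Two slips need fixing.
\begin{enumerate}[(1)]
\item The degeneracies $\hat{\overline{s}}_0\theta_{\alpha,\gamma}$ and $\hat{s}_0\theta_{\alpha,\gamma}$ belong on $\hat{d}_0,\hat{\dbar}_0$, the faces forming $\Gamma^{2,2}$. They do not belong on $\hat{d}_1,\hat{\dbar}_1$, which is where the product is read off.
\item In your final step the roles are swapped. $\hat{d}_1\hat{\dbar}_1$ of the big map is the $\star$-filler for the two $\diamond$-products, while $d_m\dbar_n$ of it is the $\diamond$-filler for the two $\star$-products.
\end{enumerate}
With these corrections the compatibility checks go through directly. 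For example, on $\hat{d}_0\Delta^{2,2}\cap\hat{d}_2\hat{\dbar}_2\Delta^{2,2}$ both prescriptions are $*$, because the $\star$-fillers land in $\Omega X$. So no fallback is needed.
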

\begin{proof}
	This follows from a standard Eckmann-Hilton argument, we sketch the proof. Given $\alpha_1,\alpha_2, \beta_1,\beta_2: \Delta^{m,n}\times \Delta^{1,1}\to X$ rel $\delta\Delta^{m,n}\times\Delta^{1,1}\cup\Delta^{m,n}\times\delta_\A\Delta^{1,1}$, we show that
	\[
	([\alpha_1]\star[\alpha_2])\diamond([\beta_1]\star[\beta_2])=([\alpha_1]\diamond [\beta_1])\star([\alpha_2]\diamond[\beta_2]).
	\] 
	
	On the one hand, we can follow the definition of Bott-Chern product to obtain maps $\theta_\alpha, \theta_\beta: \Delta^{m+1,n+1}\times \Delta^{1,1}\to X$ rel $\Delta^{m+1,n+1}\times\delta_\A\Delta^{1,1}$ such that $\theta_\alpha$ defines $[\alpha_1]\star[\alpha_2]$ in the sense that $d_m\dbar_n\theta_\alpha$ represents $[\alpha_1]\star[\alpha_2]$; and similarly $\theta_\beta$ defines $[\beta_1]\star[\beta_2]$.
	
	On the other hand, we can follow the definition of Aeppli product to obtain maps $\sigma_1,\sigma_2: \Delta^{m,n}\times\Delta^{2,2}\to X$ rel $\delta\Delta^{m,n}\times\Delta^{2,2}$ such that $\sigma_1,\sigma_2$ define $[\alpha_1]\diamond [\beta_1]$ and $[\alpha_2]\diamond [\beta_2]$ respectively.
	
	Now we can construct a map $\omega: \Delta^{m+1,n+1}\times\Delta^{2,2}\to X$ such that
	\begin{enumerate}[(1)]
		\item $d_i\omega=*$, $i<m-1$; $\dbar_j\omega=*$, $j<n-1$; $d_{m-1}\omega=\overline{s}_{n-1}\sigma_1$, $\dbar_{n-1}\omega=s_{m-1}\sigma_1$; $d_{m+1}\omega=\overline{s}_n\sigma_2$, $\dbar_{n+1}\omega=s_m\sigma_2$.
		\item $\hat{d}_0\omega=\hat{\overline{s}}_0\theta_\alpha$, $\hat{\dbar}_0\omega=\hat{s}_0\theta_\alpha$, and $\hat{d}_1\hat{\dbar}_1\omega=\theta_\beta$.
	\end{enumerate}
	Indeed $\omega$ exists because the above equations partially define $\omega$ on
	\[
	(\Lambda^{m+1}_m\boxtimes\Delta^{n+1}\cup\Delta^{m+1}\boxtimes\Lambda^{n+1}_n)\times\Delta^{2,2}\cup \Delta^{m+1,n+1}\times (\Gamma^{2,2}\cup d_2\dbar_2\Delta^{2,2})
	\]
	whose inclusion into $\Delta^{m+1,n+1}\times\Delta^{2,2}$ is anodyne by \Cref{example-1} and \Cref{pushout-product}.
	
	Finally one checks that $d_m\dbar_n\hat{d}_1\hat{\dbar}_1\omega$ simultaneously represents both $([\alpha_1]\star[\alpha_2])\diamond([\beta_1]\star[\beta_2])$ and $([\alpha_1]\diamond [\beta_1])\star([\alpha_2]\diamond[\beta_2])$.
\end{proof}
\begin{corollary}
	Let $(X,*)$ be a pointed fibrant bisimplicial set. Then $\pi_{m,n}^\BC(\Omega X,*)$ is a group for $m\ge 0$, $n\ge 0$, and is abelian for $m\ge 1$, $n\ge 1$.
\end{corollary}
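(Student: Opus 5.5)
The corollary follows by assembling the two preceding results; no new filler constructions are needed. I will treat the two claims separately.

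For the group structure in all bidegrees $m\ge 0$, $n\ge 0$, I will invoke the lemma stating that the composition-of-loops operation $\diamond$ makes $\pi^\BC_{m,n}(\Omega X)$ into a group. That lemma already covers the range $m,n\ge 0$. Its content is the Aeppli product construction applied to the pointed fibrant bisimplicial set
\[
Y=\Fun(\Delta^{m,n},X)\times_{\Fun(\delta\Delta^{m,n},X)}\Fun(\delta\Delta^{m,n},*).
\]
I will check that $Y$ is fibrant. By \Cref{fibration-inherent}, the restriction $\Fun(\Delta^{m,n},X)\to\Fun(\delta\Delta^{m,n},X)$ is a fibration, and fibrations are stable under base change, so $Y\to *$ is a fibration. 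Under this identification, elements of $\pi^\BC_{m,n}(\Omega X)$ correspond to classes of maps $\Delta^{1,1}\to Y$ rel $\delta_\A\Delta^{1,1}$, that is, to elements of $\pi^\A_{1,1}(Y)$. The Aeppli theorem applies in bidegree $(1,1)$ and gives a group. For this identification to be legitimate, relative homotopies must match under the exponential law. This is a routine unwinding: $\Delta^{m,n}\times\Delta^{1,1}\times\Delta^{1,1}$ is read in two ways, and the conditions on the two $\Delta^{1,1}$ factors are what need checking.

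For commutativity when $m\ge 1$, $n\ge 1$, I will use the Eckmann--Hilton proposition, which gives the interchange law
\[
([\alpha_1]\star[\alpha_2])\diamond([\beta_1]\star[\beta_2])=([\alpha_1]\diamond[\beta_1])\star([\alpha_2]\diamond[\beta_2]).
\]
Both operations share the unit $[*]$: for $\star$ this is \Cref{BC-product-unit}, and for $\diamond$ it is the unit lemma transported through $Y$. The standard Eckmann--Hilton argument then shows that $\star=\diamond$ and that the operation is commutative. Substituting units into the interchange law gives $a\star b=a\diamond b$ and $a\star b=b\star a$. Hence the group $(\pi^\BC_{m,n}(\Omega X),\diamond)$ is abelian, and it coincides with the Bott--Chern monoid structure, which is therefore a group.

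The only delicate point is the bookkeeping in the identification of $\pi^\BC_{m,n}(\Omega X)$ with $\pi^\A_{1,1}(Y)$ in the degenerate cases $m=0$ or $n=0$. There the Bott--Chern product $\star$ is not defined, so only the first part of the argument is used. I would check this case directly by writing out the exponential-law adjunction for relative homotopies.
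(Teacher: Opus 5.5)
Your proposal is correct and follows the paper's own proof. The paper combines the lemma that loop composition $\diamond$ makes $\pi^\BC_{m,n}(\Omega X)$ a group for $m,n\ge 0$ (defined, as you describe, via $\pi^\A_{1,1}$ of the fibrant relative mapping space $Y$) with the Eckmann--Hilton proposition giving $\star=\diamond$ and commutativity for $m,n\ge 1$. The $m=0$ or $n=0$ bookkeeping you flag is not actually delicate: the exponential-law identification works uniformly in $(m,n)$, and for instance when $m=n=0$ one has $\delta\Delta^{0,0}=\emptyset$ and $Y=X$.
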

\begin{proof}
	This follows from the previous proposition and lemma.
\end{proof}

\begin{theorem}\label{Aeppli-is-BC-of-loop}
	Let $(X,*)$ be a pointed fibrant bisimplicial set. For $m\ge 0$, $n\ge 0$, the map
\[
	\rho: \left(\pi^\A_{m+1,n+1}(X), \diamond\right)\to \left(\pi_{m,n}^\BC(\Omega X), \diamond\right)
\]
	is an isomorphism of groups.
\end{theorem}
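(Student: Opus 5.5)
Since $\rho$ is already known to be a bijection with inverse $\mu$ (previous proposition), and both sides are groups (Aeppli product on the left, composition of loops on the right), it remains only to show that $\rho$ is a homomorphism: $\rho([\alpha]\diamond[\beta])=\rho[\alpha]\diamond\rho[\beta]$ for $\alpha,\beta:\Delta^{m+1,n+1}\to X$ rel $\delta_\A\Delta^{m+1,n+1}$. Equivalently, since $\mu=\rho^{-1}$, one may show that $\mu$ is a homomorphism. I plan to work with $\mu$, because $\mu$ is defined by a free choice of filler in the contractible $PX$, whereas $\rho$ requires lifting along $\mathrm{ev}$.

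The approach is to realize both products by a single filler. Under the exponential law, a map $\Delta^{m,n}\to\Omega X$ rel $\delta\Delta^{m,n}$ is a map $\Delta^{m,n}\times\Delta^{1,1}\to X$. This map is trivial on $\delta\Delta^{m,n}\times\Delta^{1,1}\cup\Delta^{m,n}\times\delta_\A\Delta^{1,1}$. The product $\diamond$ on $\pi^\BC_{m,n}(\Omega X)$ is computed by a filler $\Sigma:\Delta^{m,n}\times\Delta^{2,2}\to X$ of the Aeppli-type horn $\Gamma^{2,2}\cup d_2\dbar_2\Delta^{2,2}$ in the loop direction, with product $\hat d_1\hat\dbar_1\Sigma$. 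The product $\diamond$ on $\pi^\A_{m+1,n+1}(X)$ is computed by a filler $\Theta:\Delta^{m+2,n+2}\to X$ of $\Gamma^{m+2,n+2}\cup d_{m+2}\dbar_{n+2}\Delta^{m+2,n+2}$, with product $d_{m+1}\dbar_{n+1}\Theta$.

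To compare them, take $\rho$-lifts $\widetilde\alpha,\widetilde\beta:\Delta^{m+1,n+1}\to PX$ and regard them as maps $\Delta^{m+1,n+1}\times\Delta^{1,1}\to X$. With these, I would construct a map
\[
\Omega:\Delta^{m+2,n+2}\times\Delta^{2,2}\to X
\]
by prescribing it on the subobject
\[
(\Gamma^{m+2,n+2}\cup d_{m+2}\dbar_{n+2}\Delta^{m+2,n+2})\times\Delta^{2,2}\cup\Delta^{m+2,n+2}\times\Lambda^{2,2}.
\]
On the $\Delta^{m+2,n+2}$-faces, $\Omega$ is given by degeneracies of $\widetilde\alpha,\widetilde\beta$ arranged exactly as in the Aeppli product. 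On $\Lambda^{2,2}$ it is given by the constant map together with degeneracies of the same data in the loop direction. This subobject is anodyne by \Cref{example-1} combined with \Cref{pushout-product}, so the prescription extends to all of $\Delta^{m+2,n+2}\times\Delta^{2,2}$.

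Next, restrict $\Omega$ along $d_{m+1}\dbar_{n+1}$ in the first factor and along suitable faces $\hat d_{\ast},\hat\dbar_{\ast}$ in the $\Delta^{2,2}$-factor. This should produce a map $\Delta^{m+1,n+1}\to PX$ that serves simultaneously as
\begin{enumerate}[(a)]
\item a valid $\rho$-lift of the Aeppli product representative $d_{m+1}\dbar_{n+1}\Theta$, where $\Theta$ is obtained by evaluating $\Omega$ at the terminal vertex of the loop direction, and
\item a filler whose top face $d_{m+1}\dbar_{n+1}$ is the representative $\hat d_1\hat\dbar_1\Sigma$ of $\rho[\alpha]\diamond\rho[\beta]$.
\end{enumerate}
Reading this restriction both ways then gives the homomorphism identity. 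This is the same style of argument as the Eckmann--Hilton proof above, with one Aeppli product in the "$X$-direction" and one in the "loop direction" intertwined.

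The main obstacle I expect is the bookkeeping of boundary conditions, in two places. First, one must check that the prescribed data on the anodyne subobject is compatible on overlaps; in particular the face identities relating $d_{m+1}\dbar_{n+1}$ of the $\rho$-lifts with the loop-direction faces need care. Second, one must check that the resulting face lies in $PX$, i.e. is trivial on $\Delta^{m+1,n+1}\times\Lambda^{1,1}$, and that its boundary is trivial on $\delta_\A$ resp. $\delta$. If the direct construction becomes unwieldy, the fallback is to prove the identity only up to homotopy: show that $\rho(\alpha\diamond\beta)$ and $\rho\alpha\diamond\rho\beta$ are both "top faces" of fillers of a common horn. Then the well-definedness lemmas (which rest on lifting homotopies along $\Lambda^{m+1,n+1}\times\Delta^{1,1}\cup\Delta^{m+1,n+1}\times\delta_\A\Delta^{1,1}$) identify their classes.
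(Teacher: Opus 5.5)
\textbf{There is a genuine gap in the construction of $\Omega$.} Your reduction is fine: $\rho$ is already a bijection, so only the homomorphism property needs proof. The problem is that the data you propose on $(\Gamma^{m+2,n+2}\cup d_{m+2}\dbar_{n+2}\Delta^{m+2,n+2})\times\Delta^{2,2}$ is not compatible, and this is not a matter of bookkeeping.

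On the overlap of the $d_m$-face with the $d_{m+2}\dbar_{n+2}$-face one needs $d_m(d_{m+2}\dbar_{n+2}\Omega)=d_{m+1}\dbar_{n+2}(d_m\Omega)$.
\begin{itemize}
\item Take $d_m\Omega=\overline{s}_n\hat{\tau}\widetilde\alpha$ for a loop-direction degeneracy $\hat{\tau}$, as ``arranged as in the Aeppli product'' suggests. Then the right side is $\overline{s}_n\hat{\tau}(d_{m+1}\dbar_{n+1}\widetilde\alpha)=\overline{s}_n\hat{\tau}\rho_\alpha$, which is not constant.
\item Any loop-direction degeneracy of $\widetilde\beta$ has $d_m$-face $*$, so the left side is constant.
\end{itemize}
In $X$ the Aeppli horn was consistent only because $d_{m+1}\dbar_{n+1}\alpha=*$. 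After lifting to $PX$ this face becomes $\rho_\alpha$.

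There is also a structural obstruction with the subobject you chose. Since $d_{m+1}d_{m+1}=d_{m+1}d_{m+2}$, the top face of your extracted lift $\hat{d}_a\hat{\dbar}_b d_{m+1}\dbar_{n+1}\Omega$ equals $\hat{d}_a\hat{\dbar}_b d_{m+1}\dbar_{n+1}(d_{m+2}\dbar_{n+2}\Omega)$. So it is read off from the data you prescribed on $d_{m+2}\dbar_{n+2}\Delta^{m+2,n+2}\times\Delta^{2,2}$; the filling does not produce it. If that data is a degeneracy of $\widetilde\beta$, this face only sees $\rho_\beta$ and cannot represent $[\rho_\alpha]\diamond[\rho_\beta]$. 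Relatedly, using $\Lambda^{2,2}$ in the loop direction prescribes $\hat{d}_1\hat{\dbar}_1$ in advance. The product $\diamond$ instead fills $\Gamma^{2,2}\cup\hat{d}_2\hat{\dbar}_2\Delta^{2,2}$ and reads off $\hat{d}_1\hat{\dbar}_1$.

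\textbf{The missing idea is a preliminary filler that produces a corrected lift of $\beta$.} The paper proceeds in two stages.
\begin{itemize}
\item \emph{First filler.} Extend a map $\nu$ from $\Lambda^{m+1,n+1}\times\Delta^{2,2}\cup\Delta^{m+1,n+1}\times(\Gamma^{2,2}\cup\hat{d}_2\hat{\dbar}_2\Delta^{2,2})$. Here $d_m\nu$, $\dbar_n\nu$, $\hat{d}_0\nu$, $\hat{\dbar}_0\nu$ are degeneracies of $\rho_\alpha$, and $\hat{d}_2\hat{\dbar}_2\nu=\widetilde\beta$.
\item \emph{Two outputs of $\nu$.} $\theta=d_{m+1}\dbar_{n+1}\nu$ is a loop-composition filler, so $\hat{d}_1\hat{\dbar}_1\theta$ represents $[\rho_\alpha]\diamond[\rho_\beta]$. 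And $\omega=\hat{d}_1\hat{\dbar}_1\nu:\Delta^{m+1,n+1}\to PX$ lifts $\beta$ with $d_m\omega=\overline{s}_n\rho_\alpha$, $\dbar_n\omega=s_m\rho_\alpha$ and $d_{m+1}\dbar_{n+1}\omega=\hat{d}_1\hat{\dbar}_1\theta$.
\item \emph{Second filler.} Only with $\omega$ in place of $\widetilde\beta$ is the horn $H:\Gamma^{m+2,n+2}\cup d_{m+2}\dbar_{n+2}\Delta^{m+2,n+2}\to PX$ well defined, with $d_mH=\overline{s}_n\widetilde\alpha$, $\dbar_nH=s_m\widetilde\alpha$ and $d_{m+2}\dbar_{n+2}H=\omega$.
\item \emph{Conclusion.} After filling, $d_{m+1}\dbar_{n+1}H$ is a legitimate $\rho$-lift of a representative of $[\alpha]\diamond[\beta]$. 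Its top face is $d_{m+1}\dbar_{n+1}\omega$, which gives $\rho([\alpha]\diamond[\beta])=[\rho_\alpha]\diamond[\rho_\beta]$.
\end{itemize}

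You could try to package both stages into one map on $\Delta^{m+2,n+2}\times\Delta^{2,2}$. But with your subobject, its $d_{m+2}\dbar_{n+2}$-face would itself have to be a filler like $\nu$, constructed beforehand. Your fallback about ``a common horn'' does not supply this step.
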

\begin{proof}
	Given $\alpha,\beta:\Delta^{m+1,n+1}\to X$ rel $\delta_\A\Delta^{m+1,n+1}$, we follow the definition of $\rho$ to construct maps $\widetilde{\alpha}, \widetilde{\beta}$, $\rho_\alpha=d_{m+1}\dbar_{n+1}\widetilde{\alpha}$ and $\rho_\beta=d_{m+1}\dbar_{n+1}\widetilde{\beta}$. 
	
	Now we define a map
	\[
	\nu: \Lambda^{m+1,n+1}\times\Delta^{2,2}\cup \Delta^{m+1,n+1}\times (\Gamma^{2,2}\cup \hat{d}_2\hat{\dbar}_2\Delta^{2,2})\to X
	\]
	by that
	\begin{enumerate}[(1)]
		\item $d_i\nu=*$, $i<m$; $\dbar_j\nu=*$, $j<n$; $d_m\nu=\hat{s}_0\hat{\overline{s}}_0\overline{s}_n\rho_\alpha$, $\dbar_n\nu=\hat{s}_0\hat{\overline{s}}_0 s_m\rho_\alpha$; and
		\item $\hat{d}_0\nu=\hat{\overline{s}}_0 s_m\overline{s}_n\rho_\alpha$, $\hat{\overline{d}}_0\nu=\hat{s}_0 s_m\overline{s}_n\rho_\alpha$, $\hat{d}_2\hat{\dbar}_2\nu=\widetilde{\beta}$.
	\end{enumerate}
	Then one checks that $\nu$ is well-defined and thus (by \Cref{example-1}, \Cref{pushout-product}) extends to a map $\nu:\Delta^{m+1,n+1}\times\Delta^{2,2}\to X$. It is straightforward to verify the following. 
	\begin{enumerate}[(1)]
		\item The map $\theta=d_{m+1}\dbar_{n+1}\nu$ satisfies that $\hat{d}_0\theta=\hat{\overline{s}}_0\rho_\alpha$, $\hat{\dbar}_0\theta=\hat{s}_0\rho_\alpha$ and $\hat{d}_2\hat{\dbar}_2\theta=\rho_\beta$. Therefore $\hat{d}_1\hat{\dbar}_1\theta$ represents $[\rho_\alpha]\diamond[\rho_\beta]$.
		\item The map $\omega=\hat{d}_1\hat{\dbar}_1\nu$ satisfies that $d_{i}\omega=*$, $i<m$; $\dbar_j\omega=*$, $j<n$; $d_m\omega=\overline{s}_n\rho_\alpha$, $\dbar_n\omega=s_m\rho_\alpha$; $d_{m+1}\dbar_{n+1}\omega=\hat{d}_1\hat{\dbar}_1\theta$ and moreover $\mathrm{ev}\circ\omega=\beta$. Note that by definition
		\begin{align*}
			\mathrm{ev}\circ\omega=\hat{d}_1\hat{\dbar}_1\omega=\hat{d}_1\hat{\dbar}_1(\hat{d}_1\hat{\dbar}_1\nu)=\hat{d}_1\hat{\dbar}_1(\hat{d}_2\hat{\dbar}_2\nu)=\hat{d}_1\hat{\dbar}_1\widetilde{\beta}=\mathrm{ev}\circ\widetilde{\beta}=\beta.
		\end{align*}
	\end{enumerate}
	
	Next we can define a map
	\[
	H:\Gamma^{m+2,n+2}\cup d_{m+2}\dbar_{n+2}\Delta^{m+2,n+2}\to PX
	\]
	by $d_i H=*$, $i<m$; $\dbar_j H=*$, $j<n$; $d_m H=\overline{s}_n\widetilde{\alpha}$, $\dbar_n H=s_m \widetilde{\alpha}$; $d_{m+2}\dbar_{n+2} H=\omega$. Then $H$ is well-defined, for example
	\begin{align*}
		d_m(d_{m+2}\dbar_{n+2}H)&=d_m\omega=\overline{s}_n\rho_\alpha,\\
		d_{m+1}\dbar_{n+2} (d_m H) &=d_{m+1}\dbar_{n+2}\overline{s}_n \widetilde{\alpha}=\overline{s}_n(d_{m+1}\dbar_{n+1}\widetilde{\alpha})=\overline{s}_n\rho_\alpha.
	\end{align*}
	Hence (by \Cref{example-1}) $H$ extends to a map $H:\Delta^{m+2,n+2}\to PX$. Notice that the map $\sigma=\mathrm{ev}\circ H$ satisfies that $d_i \sigma =*$, $i<m$; $\dbar_j \sigma=*$, $j<n$; $d_m \sigma =\overline{s}_n\alpha$, $\dbar_n \sigma=s_m \alpha$; $d_{m+2}\dbar_{n+2} \sigma=\beta$. Therefore by definition we have
	\[
	[d_{m+1}\dbar_{n+1}\sigma]=[\alpha]\diamond[\beta].
	\]
	Moreover notice that $d_{m+1}\dbar_{n+1}H$ is a lift of $d_{m+1}\dbar_{n+1}\sigma$ and that $d_{m+1}\dbar_{n+1}H$ is constant on $\Lambda^{m+1}_{m+1}\boxtimes\Delta^{n+1}\cup \Delta^{m+1}\boxtimes\Lambda^{n+1}_{n+1}$, for example
	\[
	d_m(d_{m+1}\dbar_{n+1}H)=d_m\dbar_{n+1}(d_m H)=d_m\dbar_{n+1}\overline{s}_n \widetilde{\alpha}=d_m\widetilde{\alpha}=*.
	\]
	Hence by the definition of $\rho$ we have that
	\begin{align*}
		\rho([\alpha]\diamond[\beta])&=\rho[d_{m+1}\dbar_{n+1}\sigma]\\
		&=[d_{m+1}\dbar_{n+1}(d_{m+1}\dbar_{n+1}H)]\\
		&=[d_{m+1}\dbar_{n+1}(d_{m+2}\dbar_{n+2}H)]\\
		&=[d_{m+1}\dbar_{n+1}\omega]=[\hat{d}_1\hat{\dbar}_1\theta]\\
		&=[\rho_\alpha]\diamond [\rho_\beta].
	\end{align*}
	This completes the proof.
	\end{proof}

\subsection{Fibration long exact sequences}
\begin{definition}[fiber sequence]
	Let $p: (X,*)\to (Y,*)$ be a pointed fibration, that is $p$ is a fibration and $p$ preserves base point. Let $F$ be the fiber of $p$ at the base point defined by the pull-back diagram
	\[
	\begin{tikzcd}
		F\ar[r,"i"]\ar[d] & X\ar[d,"p"]\\
		*\ar[r] & Y
	\end{tikzcd}
	\]
	It follows that $F$ is fibrant, and that $F$ is canonically pointed since the base point of $X$ is contained in $F$.
	
	If $Y$ is fibrant, we say $F\xrightarrow{i} X\xrightarrow{p} Y$ is a (pointed) fiber sequence.
\end{definition}

The goal of this subsection is to establish a long exact sequence in homotopy for a fiber sequence. Let us fix a fiber sequence $F\xrightarrow{i} X\xrightarrow{p} Y$ throughout this subsection. Then we can define a connecting map $\del\delbar: \pi_{m+1,n+1}^\A(Y)\to \pi_{m,n}^\BC(F)$ in the same way as we defined the homomorphism $\rho$ in the previous subsection. Indeed, one replaces $\mathrm{ev}:PX\to X$ by the fibration $p: X\to Y$.

In fact the connecting map $\del\delbar$ factors as $\del\delbar=\tau\circ\rho$ where $\tau: \pi_{m,n}^\BC(\Omega Y)\to \pi_{m,n}^\BC(F)$ is the map defined as follows. Given $\alpha:\Delta^{m,n}\to \Omega Y$ rel $\delta\Delta^{m,n}$, we may view it as a map $\alpha: \Delta^{m,n}\times\Delta^{1,1}\to Y$ rel $\delta\Delta^{m,n}\times\Delta^{1,1}\cup \Delta^{m,n}\times\delta_\A\Delta^{1,1}$. Then consider the lifting problem
\[
\begin{tikzcd}
	\delta\Delta^{m,n}\times\Delta^{1,1}\cup \Delta^{m,n}\times\Lambda^{1,1}\ar[d]\ar[r,"*"] & X\ar[d,"p"]\\
	\Delta^{m,n}\times\Delta^{1,1}\ar[r,"\alpha"]\ar[ur,"\widetilde{\alpha}",dotted] & Y
\end{tikzcd}
\]
As usual the lifting $\widetilde{\alpha}$ exists and yields a well-defined map
\[
\tau: \pi_{m,n}^\BC(\Omega Y)\to \pi_{m,n}^\BC(F),\ [\alpha]\mapsto[\hat{d}_1\hat{\dbar}_1\widetilde{\alpha}]. 
\]

\begin{lemma}
	The map $\tau$ is a monoid homomorphism with respect to Bott-Chern products.
\end{lemma}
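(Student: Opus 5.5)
We need to show $\tau([\alpha]\star[\beta])=\tau[\alpha]\star\tau[\beta]$ for $\alpha,\beta:\Delta^{m,n}\to\Omega Y$ rel $\delta\Delta^{m,n}$, with $m,n\ge1$, and that $\tau[*]=[*]$. The unit part is immediate: for $\alpha=*$ we may take $\widetilde{\alpha}=*$, which gives $\tau[*]=[*]$. Well-definedness of $\tau$ is assumed, as the paper asserts it. The plan is to \emph{lift the whole Bott-Chern product diagram} along $p$, in the same way as in the proofs of \Cref{BC-product-well-define} and \Cref{Aeppli-is-BC-of-loop}.

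First, view $\alpha,\beta$ as maps $\Delta^{m,n}\times\Delta^{1,1}\to Y$. Apply the construction of $\star$ in the fibrant bisimplicial set $\Fun(\Delta^{1,1},Y)$, or equivalently in $\Omega Y$. This gives
\[
\theta:\Delta^{m+1,n+1}\times\Delta^{1,1}\to Y,
\]
constant on $\Delta^{m+1,n+1}\times\delta_\A\Delta^{1,1}$. Its faces are as follows: $d_i\theta=*$ for $i<m-1$ and $\dbar_j\theta=*$ for $j<n-1$; $d_{m-1}\theta=\overline{s}_{n-1}\alpha$ and $\dbar_{n-1}\theta=s_{m-1}\alpha$; $d_{m+1}\theta=\overline{s}_n\beta$ and $\dbar_{n+1}\theta=s_m\beta$. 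Then $d_m\dbar_n\theta$ represents $[\alpha]\star[\beta]$.

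Next, choose lifts $\widetilde{\alpha},\widetilde{\beta}$ as in the definition of $\tau$. Define a partial lift
\[
\widetilde{\theta}:(\Lambda^{m+1}_m\boxtimes\Delta^{n+1}\cup\Delta^{m+1}\boxtimes\Lambda^{n+1}_n)\times\Delta^{1,1}\cup\Delta^{m+1,n+1}\times\Lambda^{1,1}\to X
\]
by the same face formulas with $\alpha,\beta$ replaced by $\widetilde{\alpha},\widetilde{\beta}$, and by $*$ on $\Delta^{m+1,n+1}\times\Lambda^{1,1}$.

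Two things must be checked.
\begin{enumerate}[(1)]
\item Compatibility on overlaps. This is the same computation that shows $\theta$ is well defined in the definition of $\star$. The new ingredient is that $\widetilde{\alpha}$ and $\widetilde{\beta}$ vanish on $\delta\Delta^{m,n}\times\Delta^{1,1}\cup\Delta^{m,n}\times\Lambda^{1,1}$, so the faces glue and agree with $*$ on the $\Lambda^{1,1}$ part.
\item The partial lift covers $\theta$, since $p\widetilde{\alpha}=\alpha$ and $p\widetilde{\beta}=\beta$.
\end{enumerate}
The inclusion of this domain into $\Delta^{m+1,n+1}\times\Delta^{1,1}$ is anodyne by \Cref{pushout-product}, because $\Lambda^{1,1}\hookrightarrow\Delta^{1,1}$ is anodyne. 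So $\widetilde{\theta}$ extends to a lift $\Delta^{m+1,n+1}\times\Delta^{1,1}\to X$ of $\theta$.

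Now set $\Theta=\hat d_1\hat\dbar_1\widetilde{\theta}:\Delta^{m+1,n+1}\to X$. Since $p\Theta=\hat d_1\hat\dbar_1\theta=*$, the map $\Theta$ lands in $F$. Its faces are $d_i\Theta=*$ for $i<m-1$, $d_{m-1}\Theta=\overline{s}_{n-1}\hat d_1\hat\dbar_1\widetilde{\alpha}$ and $d_{m+1}\Theta=\overline{s}_n\hat d_1\hat\dbar_1\widetilde{\beta}$, together with the symmetric formulas in the $\dbar$ direction. Hence $\Theta$ is a valid filler defining $\tau[\alpha]\star\tau[\beta]$, and so $d_m\dbar_n\Theta$ represents $\tau[\alpha]\star\tau[\beta]$.

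On the other hand, $d_m\dbar_n\widetilde{\theta}$ lifts $d_m\dbar_n\theta$ and is constant on $\delta\Delta^{m,n}\times\Delta^{1,1}\cup\Delta^{m,n}\times\Lambda^{1,1}$. This constancy follows from the face identities, as in the proof that $d_m\dbar_n\theta$ is constant on $\delta\Delta^{m,n}$, combined with $\widetilde{\theta}=*$ on $\Lambda^{1,1}$. So $d_m\dbar_n\widetilde{\theta}$ is an admissible lift in the definition of $\tau$, and $\tau([\alpha]\star[\beta])=[\hat d_1\hat\dbar_1 d_m\dbar_n\widetilde{\theta}]$. This equals $[d_m\dbar_n\Theta]$, because operators in the $\Delta^{1,1}$ direction commute with those in the $\Delta^{m+1,n+1}$ direction.

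The main obstacle is the bookkeeping in the compatibility check (1) and in the boundary constancy of $d_m\dbar_n\widetilde{\theta}$. Both reduce to the identities already used for the definition of $\star$, together with the boundary conditions on $\widetilde{\alpha}$ and $\widetilde{\beta}$.
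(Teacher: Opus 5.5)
Your proposal is correct and is essentially the paper's argument: the paper's entire proof is the remark that the construction of $\tau$ (which acts only in the $\Delta^{1,1}$-direction) does not touch the Bott-Chern product (which acts in the $\Delta^{m+1,n+1}$-direction), and you make this precise by lifting the product filler along $p$ via \Cref{pushout-product} and then commuting $\hat{d}_1\hat{\dbar}_1$ past $d_m\dbar_n$. One small imprecision: the filler $\theta$ must be built in $\Omega Y$, as you in fact use, and not in $\Fun(\Delta^{1,1},Y)$, since only the former guarantees that $\theta$ is constant on $\Delta^{m+1,n+1}\times\delta_\A\Delta^{1,1}$.
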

\begin{proof}
	We simply note that the construction of $\tau$ does not touch the Bott-Chern side.
\end{proof}

\begin{corollary} For $m\ge 1$, $n\ge 1$, the connecting map
	\[
	\del\delbar: \left(\pi_{m+1,n+1}^\A(Y),\diamond\right)\to \left(\pi_{m,n}^\BC(F),\star\right)
	\]
	is a homomorphism of monoids.
\end{corollary}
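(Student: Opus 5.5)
The plan is to combine the factorization $\del\delbar=\tau\circ\rho$ with the results already established. The corollary then follows from three facts.
\begin{enumerate}[(1)]
	\item By \Cref{Aeppli-is-BC-of-loop}, $\rho:(\pi^\A_{m+1,n+1}(Y),\diamond)\to(\pi^\BC_{m,n}(\Omega Y),\diamond)$ is a group isomorphism.
	\item For $m\ge1$, $n\ge1$, the Eckmann--Hilton proposition gives $\diamond=\star$ on $\pi^\BC_{m,n}(\Omega Y)$. Hence $\rho$ is also a homomorphism into $(\pi^\BC_{m,n}(\Omega Y),\star)$.
	\item By the preceding lemma, $\tau$ is a monoid homomorphism for $\star$.
\end{enumerate}
Composing gives $\del\delbar([\alpha]\diamond[\beta])=\tau(\rho[\alpha]\star\rho[\beta])=\del\delbar[\alpha]\star\del\delbar[\beta]$. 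Units are preserved because the constant map lifts to the constant map at each stage.

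The first thing to check is the factorization $\del\delbar=\tau\circ\rho$ itself, since the paper only asserts it. Given $\alpha$ rel $\delta_\A\Delta^{m+1,n+1}$, let $\widetilde{\alpha}:\Delta^{m+1,n+1}\to PY$ be the lift used for $\rho$. Via the exponential law, this is a map $\Delta^{m+1,n+1}\times\Delta^{1,1}\to Y$. Lift it through $p$ to $X$, relative to $\Lambda^{m+1,n+1}\times\Delta^{1,1}\cup\Delta^{m+1,n+1}\times\Lambda^{1,1}$, on which it is sent to $*$. This inclusion is anodyne by \Cref{pushout-product}, so the lift exists.

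Restricting this lift to $\hat{d}_1\hat{\dbar}_1$ gives a lift of $\alpha$ through $p$ that is constant on $\Lambda^{m+1,n+1}$. Applying $d_{m+1}\dbar_{n+1}$ to it therefore represents $\del\delbar[\alpha]$. Restricting instead to $d_{m+1}\dbar_{n+1}$ gives a lift $\widetilde{\rho_\alpha}$ as in the definition of $\tau$, and applying $\hat{d}_1\hat{\dbar}_1$ to that represents $\tau\rho[\alpha]$. These two face composites commute, so the two representatives coincide on the nose. Any ambiguity in the choices is absorbed by the well-definedness lemmas.

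The main obstacle is making the lemma that $\tau$ is a $\star$-homomorphism rigorous, since it is only sketched. To check it, take $\theta:\Delta^{m+1,n+1}\to\Omega Y$ defining $[\alpha]\star[\beta]$, viewed as a map $\Delta^{m+1,n+1}\times\Delta^{1,1}\to Y$. Lift it to $X$ relative to the subcomplex
\[
(\Lambda^{m+1}_m\boxtimes\Delta^{n+1}\cup\Delta^{m+1}\boxtimes\Lambda^{n+1}_n)\times\Delta^{1,1}\cup\Delta^{m+1,n+1}\times\Lambda^{1,1}.
\]
On the faces $d_{m-1},\dbar_{n-1}$ the data are the chosen lifts $\widetilde{\alpha}$ (suitably degenerated), on $d_{m+1},\dbar_{n+1}$ they are $\widetilde{\beta}$, and on the remaining faces they are $*$. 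The inclusion is anodyne by \Cref{pushout-product}, so the lift exists.

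Applying $\hat{d}_1\hat{\dbar}_1$ to this lift gives a map $\Delta^{m+1,n+1}\to F$ that defines $\tau[\alpha]\star\tau[\beta]$ in the sense of the Bott-Chern product. Applying $d_m\dbar_n$ to the lift gives a lift of $d_m\dbar_n\theta$, which computes $\tau([\alpha]\star[\beta])$. The two composites agree, which proves $\tau$ is a $\star$-homomorphism. The well-definedness of the extended data is routine bookkeeping of simplicial identities.
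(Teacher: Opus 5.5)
Your proposal is correct and takes essentially the paper's route. The paper also reduces the statement to the factorization $\del\delbar=\tau\circ\rho$, proved by the same lift relative to $\Lambda^{m+1,n+1}\times\Delta^{1,1}\cup\Delta^{m+1,n+1}\times\Lambda^{1,1}$ followed by commuting $d_{m+1}\dbar_{n+1}$ past $\hat{d}_1\hat{\dbar}_1$, and it implicitly combines this with \Cref{Aeppli-is-BC-of-loop}, the Eckmann--Hilton proposition and the lemma on $\tau$; your explicit lifting argument showing that $\tau$ preserves $\star$ is a valid filling-in of what the paper only sketches.
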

\begin{proof}
It suffices to show that $\del\delbar$ factors as $\pi_{m+1,n+1}^\A(Y)\xrightarrow{\rho}\pi_{m,n}^\BC(\Omega Y)\xrightarrow{\tau}\pi_{m,n}^\BC (F)$, we sketch the proof. Given $\alpha:\Delta^{m+1,n+1}\to Y$ rel $\delta_\A\Delta^{m+1,n+1}$, we can follow the definition of $\rho$ to find a map
$\widetilde{\alpha}: \Delta^{m+1,n+1}\to PY$ rel $\Lambda^{m+1,n+1}$ such that $\rho[\alpha]=[d_{m+1}\dbar_{n+1}\widetilde{\alpha}]$. Next, we may view $\widetilde{\alpha}$ as a map $\Delta^{m+1,n+1}\times\Delta^{1,1}\to Y$ rel $\Lambda^{m+1,n+1}\times\Delta^{1,1}\cup\Delta^{m+1,n+1}\times\delta_\A\Delta^{1,1}$ and then find a map $\theta:\Delta^{m+1,n+1}\times\Delta^{1,1}\to X$ rel $\Lambda^{m+1,n+1}\times\Delta^{1,1}\cup\Delta^{m+1,n+1}\times\Lambda^{1,1}$ lifting it along $p: X\to Y$. Finally one sees that $d_{m+1}\dbar_{n+1}\hat{d}_1\hat{\dbar}_1\theta$ represents both $\tau(\rho[\alpha])$ and $\del\delbar[\alpha]$.
\end{proof}

\begin{theorem}\label{homotopyLES}
	Let $F\xrightarrow{i} X\xrightarrow{p} Y$ be a fiber sequence. For $m\ge 0$, $n\ge 0$, there is an induced long exact sequence
	\[
	\cdots \to \pi_{m,n}^\BC(\Omega F)\xrightarrow{i_*} \pi_{m,n}^\BC(\Omega X) \xrightarrow{p_*} \pi_{m,n}^\BC(\Omega Y)\xrightarrow{\tau}\pi_{m,n}^\BC(F)\xrightarrow{i_*} \pi_{m,n}^\BC(X)\xrightarrow{p_*} \pi_{m,n}^\BC(Y).
	\]
\end{theorem}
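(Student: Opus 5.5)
The plan is to prove exactness at each of the three spots $\pi^\BC_{m,n}(X)$, $\pi^\BC_{m,n}(F)$ and $\pi^\BC_{m,n}(\Omega Y)$. Exactness further to the left will then follow by applying the same three statements to the fiber sequence $\Omega F\to\Omega X\to\Omega Y$. To get that this is again a fiber sequence, I would note that $\Omega$ is a pullback of $\Fun(\Delta^{1,1},-)$ along $\Fun(\delta_\A\Delta^{1,1},-)$. The induced map $\Omega X\to \Omega Y\times_{\Omega Y}\cdots$ is a base change of $\Fun(\Delta^{1,1},X)\to \Fun(\delta_\A\Delta^{1,1},X)\times_{\Fun(\delta_\A\Delta^{1,1},Y)}\Fun(\Delta^{1,1},Y)$, which is a fibration by the proposition preceding \Cref{fibration-inherent}. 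Hence $\Omega p$ is a fibration, and its fiber is $\Omega F$ since limits commute. Exactness is meant in the pointed-set sense: image equals preimage of the base class.

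\textbf{Exactness at $\pi^\BC_{m,n}(X)$.} We have $p_*i_*=*$ trivially. Conversely, suppose $\alpha:\Delta^{m,n}\to X$ rel $\delta\Delta^{m,n}$ has $p\alpha\sim *$. Take a homotopy $h:\Delta^{m,n}\times\Delta^{1,1}\to Y$ rel $\delta\Delta^{m,n}$, using the reversed-homotopy lemma so that $h$ starts at $p\alpha$ and is constant $*$ on the other end. Lift $h$ along $p$ over $\delta\Delta^{m,n}\times\Delta^{1,1}\cup\Delta^{m,n}\times(\text{the sub-object where }h\text{ restricts to }\alpha)$. This inclusion is anodyne by \Cref{pushout-product}, provided the reversed boundary pieces form $\Lambda^{1,1}$ up to the symmetry $k\mapsto 1-k$. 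The end of the lift then lies in $F$, giving $\alpha$ homotopic in $X$ to $i_*$ of a class.

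\textbf{Exactness at $\pi^\BC_{m,n}(F)$.} For $i_*\tau=*$: the lift $\widetilde\alpha$ in the definition of $\tau$ is itself a homotopy in $X$ from $*$ to $\hat d_1\hat{\dbar}_1\widetilde\alpha$. Conversely, a null-homotopy in $X$ of $i\beta$ projects to a map $\Delta^{m,n}\times\Delta^{1,1}\to Y$ that is $*$ on $\Delta^{m,n}\times\delta_\A\Delta^{1,1}$, i.e.\ an element of $\pi^\BC_{m,n}(\Omega Y)$. Its $\tau$-image can be computed using that same null-homotopy as the lift, so it is $[\beta]$.

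\textbf{Exactness at $\pi^\BC_{m,n}(\Omega Y)$.} The inclusion $\mathrm{im}\,p_*\subset\tau^{-1}(*)$ holds because a loop in $X$ serves as its own lift, so $\tau$ of it is $*$. The converse is the main obstacle. Given $\alpha$ with $\tau[\alpha]=*$, \Cref{character-BC} gives $\theta:\Delta^{m+1,n+1}\to F$ with $d_i\theta=\dbar_j\theta=*$ and $d_{m+1}\dbar_{n+1}\theta=\hat d_1\hat{\dbar}_1\widetilde\alpha$. I would glue $\widetilde\alpha$ and $\theta$ into a partial map on $\Delta^{m+1,n+1}\times\Delta^{1,1}$. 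It is defined on $d_{m+1}\dbar_{n+1}\Delta^{m+1,n+1}\times\Delta^{1,1}$ as $\widetilde\alpha$, on $\Delta^{m+1,n+1}\times d_1\dbar_1\Delta^{1,1}$ as $\theta$, and as $*$ on $\Lambda^{m+1,n+1}\times\Delta^{1,1}\cup\Delta^{m+1,n+1}\times\Lambda^{1,1}$. The corresponding inclusion is anodyne by \Cref{pushout-product}, applied as in \Cref{character-BC}. Extend this to $\Omega$ over $Y$ by lifting against $p$ compatibly with $p\widetilde\alpha=\alpha$; this requires first extending the $Y$-side null-data using fibrancy of $Y$. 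The face $d_{m+1}\dbar_{n+1}$ of the resulting map then yields a loop in $X$ whose $p$-image is homotopic to $\alpha$ rel $\delta\Delta^{m,n}$. The delicate bookkeeping is ensuring that the glued domain is exactly an anodyne pushout-product, and that the $Y$-component matches $\alpha$ up to a homotopy which \Cref{character-BC} recognizes. I would check this by mimicking the verification in the proof of \Cref{Aeppli-is-BC-of-loop}.
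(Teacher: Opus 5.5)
\textbf{There is a genuine gap, and it sits in the one nontrivial step, which is also the only step the paper proves.} The peripheral parts of your plan are fine: looping the fiber sequence, exactness at $\pi^\BC_{m,n}(X)$ and at $\pi^\BC_{m,n}(F)$, and $\im p_*\subset\tau^{-1}([*])$. (At $\pi^\BC_{m,n}(X)$ you do not need the reversed-homotopy lemma. By symmetry of $\sim$, take $h$ from $p\alpha$ to $*$; then $p\alpha$ already sits on $\Delta^{m,n}\times\Lambda^{1,1}$.) The failing step is the inclusion $\tau^{-1}([*])\subset\im p_*$.

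\textbf{Why the gluing fails.} Your glued domain is $\delta_\A\Delta^{m+1,n+1}\times\Delta^{1,1}\cup\Delta^{m+1,n+1}\times\delta_\A\Delta^{1,1}$. This is the pushout-product of two monomorphisms, neither of which is anodyne, so \Cref{pushout-product} does not apply. In \Cref{character-BC} each direction uses exactly one anodyne factor, $\Lambda^{1,1}$ or $\Lambda^{m+1,n+1}$, never two Aeppli boundaries.

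Worse, the required extension cannot exist except in the trivial case. Its image under $p$ would be a map $E:\Delta^{m+1,n+1}\to\Omega Y$ with $d_iE=*$ for $i\le m$, $\dbar_jE=*$ for $j\le n$, and $d_{m+1}\dbar_{n+1}E=\alpha$. By \Cref{character-BC} applied to $\Omega Y$, such an $E$ exists if and only if $[\alpha]=[*]$ in $\pi^\BC_{m,n}(\Omega Y)$. So ``first extending the $Y$-side null-data'' presupposes that $\alpha$ is trivial. Moreover, the face $d_{m+1}\dbar_{n+1}$ of any such extension is $\widetilde\alpha$ itself. That is not a loop in $X$, since its $\hat d_1\hat\dbar_1$-end is $\tau_\alpha$.

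\textbf{The missing idea.} Keep the null data in the $\Delta^{1,1}$-direction and concatenate paths inside $X$, rather than using the $\Delta^{m+1,n+1}$-shaped $\theta$ from \Cref{character-BC}.

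Let $h:\Delta^{m,n}\times\Delta^{1,1}\to F$ be a homotopy rel $\delta\Delta^{m,n}$ from $\tau_\alpha=\hat d_1\hat\dbar_1\widetilde\alpha$ to $*$. The inclusion $\delta\Delta^{m,n}\times\Delta^{2,2}\cup\Delta^{m,n}\times(\Gamma^{2,2}\cup\hat d_2\hat\dbar_2\Delta^{2,2})\hookrightarrow\Delta^{m,n}\times\Delta^{2,2}$ is anodyne by \Cref{example-1} and \Cref{pushout-product}. Using fibrancy of $X$, fill $\theta:\Delta^{m,n}\times\Delta^{2,2}\to X$ with
\[
\hat d_0\theta=\hat{\overline{s}}_0\widetilde\alpha,\qquad \hat\dbar_0\theta=\hat s_0\widetilde\alpha,\qquad \hat d_2\hat\dbar_2\theta=ih.
\]
These data agree on overlaps precisely because $h$ starts at $\tau_\alpha$.

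Now set $\sigma=\hat d_1\hat\dbar_1\theta$. It is a loop in $X$ rel $\delta\Delta^{m,n}$: we have $\hat d_0\sigma=\hat d_0\widetilde\alpha=*$, likewise $\hat\dbar_0\sigma=*$, and $\hat d_1\hat\dbar_1\sigma=\hat d_1\hat\dbar_1 ih=*$. Finally, $p\theta$ is exactly a filler computing the loop composition $[\alpha]\diamond[*]$ in $\pi^\BC_{m,n}(\Omega Y)$. Hence $p_*[\sigma]=[\alpha]\diamond[*]=[\alpha]$. No lifting along $p$ is needed at this stage; only fibrancy of $X$ is used.
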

\begin{remark}
	All the sets are pointed by the constant class $[*]$. By exactness, we mean the preimage of $[*]$ agrees with the image of the previous map. The maps are homomorphisms of monoids whenever the products are defined.
\end{remark}
\begin{proof}[Proof of \Cref{homotopyLES}]
	We only verify that $\ker \tau\subset\im p_*$. Given $\alpha:\Delta^{m,n}\to \Omega Y$ rel $\delta\Delta^{m,n}$, we follow the definition of $\tau$ to construct a map $\widetilde{\alpha}: \Delta^{m,n}\times\Delta^{1,1}\to X$ such that $\tau_\alpha=\hat{d}_1\hat{\dbar}_1\widetilde{\alpha}$ represents $\tau[\alpha]$.
	
	Assume now $[\tau_\alpha]=[*]\in \pi_{m,n}^\BC(F)$ i.e., there is a homotopy $h: \Delta^{m,n}\times\Delta^{1,1}\to F$ such that $\hat{d}_0 h=\hat{\overline{s}}_0\tau_\alpha$, $\hat{\dbar}_0 h=\hat{s}_0\tau_\alpha$ and $\hat{d}_1\hat{\dbar}_1 h=*$. Then we may combine $\widetilde{\alpha}$ and $ih$ to obtain a map $\theta:\Delta^{m,n}\times\Delta^{2,2}\to X$ rel $\delta\Delta^{m,n}\times\Delta^{2,2}$ such that $\hat{d}_0\theta=\hat{\overline{s}}_0\widetilde{\alpha}$, $\hat{\dbar}_0\theta=\hat{s}_0\widetilde{\alpha}$ and $\hat{d}_2\hat{\dbar}_2 \theta=i h$. Then $\sigma=\hat{d}_1\hat{\dbar}_1\theta$ is a loop in $X$. Moreover $p\theta$ implies that
	\[
	[p\sigma]=[p\widetilde{\alpha}]\diamond[pih]=[\alpha]\diamond[*]=[\alpha].
	\]
	This proves that $[\alpha]=p_*[\sigma]\in \im p_*$.
	\end{proof}
	
	\begin{corollary}
		Let $F\xrightarrow{i} X\xrightarrow{p} Y$ be a fiber sequence. For $m\ge 0$, $n\ge 0$, there is an induced exact sequence
	\[
	\pi_{m+1,n+1}^\A(F)\xrightarrow{i_*} \pi_{m+1,n+1}^\A(X) \xrightarrow{p_*} \pi_{m+1,n+1}^\A(Y)\xrightarrow{\del\delbar}\pi_{m,n}^\BC(F)\xrightarrow{i_*} \pi_{m,n}^\BC(X)\xrightarrow{p_*} \pi_{m,n}^\BC(Y).
	\]
	\end{corollary}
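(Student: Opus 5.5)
The plan is to derive this from \Cref{homotopyLES} together with the identification of \Cref{Aeppli-is-BC-of-loop}. Concretely, I would compare the sequence in the statement with the last six terms of \Cref{homotopyLES},
\[
\pi_{m,n}^\BC(\Omega F)\xrightarrow{i_*} \pi_{m,n}^\BC(\Omega X) \xrightarrow{p_*} \pi_{m,n}^\BC(\Omega Y)\xrightarrow{\tau}\pi_{m,n}^\BC(F)\xrightarrow{i_*} \pi_{m,n}^\BC(X)\xrightarrow{p_*} \pi_{m,n}^\BC(Y),
\]
using the bijections $\rho_F,\rho_X,\rho_Y$ from $\pi^\A_{m+1,n+1}$ of $F,X,Y$ to $\pi^\BC_{m,n}$ of the corresponding loop spaces. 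On the last three terms the two sequences are literally the same, so the work is to show that the resulting ladder commutes.

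The first square to check is $\rho_X\circ i_*=(\Omega i)_*\circ\rho_F$, and the second is its analogue for $p$. Both follow from naturality of the construction of $\rho$ in pointed maps. Given $\alpha:\Delta^{m+1,n+1}\to F$ rel $\delta_\A\Delta^{m+1,n+1}$ and a lift $\widetilde\alpha:\Delta^{m+1,n+1}\to PF$ as in the definition of $\rho$, the composite $P(i)\circ\widetilde\alpha$ is a valid lift for $i\alpha$. This holds because $\mathrm{ev}$ commutes with $P(i)$ and the constant map is preserved. Hence $d_{m+1}\dbar_{n+1}(P(i)\widetilde\alpha)=\Omega(i)\rho_\alpha$. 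Since $\rho$ is independent of the choice of lift, as shown in the lemma preceding its definition, the square commutes. The same argument applies to $p$.

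The third square is $\del\delbar=\tau\circ\rho_Y$. This is exactly the factorization established in the proof of the preceding corollary. That argument only used lifting along anodyne extensions, so it is valid for all $m,n\ge 0$, not just in the range where the products are defined.

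Exactness then transfers formally. The $\rho$'s are bijections preserving the base point $[*]$, since $\widetilde{*}=*$ is a valid lift. Preimages of $[*]$ and images therefore correspond under the vertical maps, so exactness at each spot of the top sequence gives exactness of the bottom one. At $\pi^\A_{m+1,n+1}(Y)$ we use that $\rho_Y$ carries $\ker\del\delbar$ onto $\ker\tau$ and $\im p_*$ onto $\im (\Omega p)_*$.

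The main obstacle is that \Cref{homotopyLES} only verified one inclusion ($\ker\tau\subset\im p_*$), leaving the others implicit. If needed, I would supply them in the same manner.
\begin{itemize}
\item $\ker i_*\subset\im\tau$: lift a null-homotopy of $i\beta$ in $X$, project it to $Y$, and read off a loop in $Y$ via \Cref{character-BC}.
\item $\ker p_*\subset\im i_*$: lift the null-homotopy of $p\beta$ to $X$ using the fibration $p$ and the anodyne map of \Cref{pushout-product}, then correct $\beta$ into $F$.
\end{itemize}
The compositions being trivial are immediate from $pi=*$ and from the definition of $\tau$ via a lift.
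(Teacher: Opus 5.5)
Your proposal is correct and follows the paper's proof, which simply combines \Cref{homotopyLES} with the bijection $\rho$ of \Cref{Aeppli-is-BC-of-loop}. You also make explicit what the paper leaves implicit: naturality of $\rho$ in pointed maps, the factorization $\del\delbar=\tau\circ\rho_Y$ (from the preceding corollary, valid for all $m,n\ge 0$), and the transfer of exactness through base-point-preserving bijections, all of which are sound.
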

	\begin{proof}
		This follows from \Cref{homotopyLES} and \Cref{Aeppli-is-BC-of-loop}.
	\end{proof}

\section{Bisimplicial abelian groups}
\subsection{Dold-Kan correspondence}
We now treat bisimplicial abelian groups and compute their Bott-Chern and Aeppli homotopy groups. We show that they coincide with the Bott-Chern homology groups of the associated Moore bicomplexes which we now define.
\begin{definition}[Moore bicomplex]
	Let $A$ be a bisimplicial abelian group. The \textit{Moore bicomplex} of $A$ is the bicomplex $A_{\bullet,\bullet}=\bigoplus\limits_{m,n\ge 0} A_{m,n}$ with boundary operators
	\begin{align*}
		\del &=\sum_{i=0}^m (-1)^i d_i: A_{m,n}\to A_{m-1,n}\\
		\delbar& =\sum_{j=0}^n (-1)^j\dbar_j: A_{m,n}\to A_{m,n-1}
	\end{align*}
	Then from the simplicial relations $\del^2=0, \delbar^2=0$, and moreover $\del\delbar=\delbar\del$ since $\{d_i\}, \{\dbar_j\}$ commute\footnote{the operators $\del'=(-1)^m\del$ and $\delbar'=(-1)^n\delbar$ satisfy $(\del')^2=(\delbar')^2=\del'\delbar'+\delbar'\del'=0$.}. Abusing the notation, we will denote the Moore bicomplex of $A$ by $A$.
\end{definition}

\begin{definition}[normalized Moore bicomplex]
	Let $A$ be a bisimplicial abelian group. Let $N_\del A$ and $N_\delbar A$ be the sub-bicomplexes of $A$ given by
\begin{align*}
	(N_\del A)_{m,n} & =\cap_{i=0}^{m-1} \ker d_i\\
	(N_\delbar A)_{m,n} & =\cap_{j=0}^{n-1} \ker \dbar_j
\end{align*}
with boundary operators $((-1)^m d_m, \delbar)$ and $(\del, (-1)^n \dbar_n)$ respectively. Then we define the \textit{normalized Moore bicomplex} of $A$, denoted by $NA$, to be
	\[
	NA=N_\del A\cap N_\delbar A
	\]
	with restricted boundary operators $(-1)^m d_m$ and $(-1)^n \dbar_n$.
\end{definition}
\begin{lemma}\label{DK-homotopy}
	The inclusion $i: NA\to A$ is a chain homotopy equivalence of bicomplexes. Moreover, this equivalence is natural with respect to bisimplicial abelian groups $A$.
\end{lemma}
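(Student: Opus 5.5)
We will reduce to the classical one-variable Dold--Kan statement, applied one direction at a time. Recall the classical fact: for a simplicial abelian group $B$, the normalized complex $N B_m=\bigcap_{i<m}\ker d_i$, with differential $(-1)^m d_m$, includes into the Moore complex $(B,\sum(-1)^i d_i)$ as a natural chain homotopy equivalence. More precisely, there is a natural projection $p:B\to NB$ built from the degeneracies and face maps, for instance $p=\prod_i(1-s_id_{i+1})$ in a suitable order or the standard idempotent. It satisfies $p\circ i=\mathrm{id}$, and there is a natural chain homotopy $i\circ p\simeq \mathrm{id}$ given by a natural operator $h$ that is a $\ZZ$-linear combination of composites of face and degeneracy maps. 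The key point is that $i$, $p$ and $h$ are built only from the simplicial operators $d_i,s_i$, hence are natural in $B$.

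First step: apply this to each vertical slice index. Fix $n$ and apply the classical result to the simplicial abelian group $A_{\bullet,n}$ in the horizontal direction. This gives $i_\del: N_\del A\to A$, $p_\del$ and $h_\del$, all built from $\{d_i,s_i\}$. Since these operators commute with $\{\dbar_j,\overline{s}_j\}$, the maps $i_\del,p_\del,h_\del$ commute with $\delbar$. Therefore $i_\del$ and $p_\del$ are morphisms of bicomplexes. Also $h_\del$ is a homotopy of bidegree $(1,0)$ satisfying $\del h_\del+h_\del\del=\mathrm{id}-i_\del p_\del$ and $h_\del\delbar=\delbar h_\del$. This makes $i_\del$ a chain homotopy equivalence of bicomplexes; here I will state the notion as a homotopy equivalence in each direction compatible with the other differential, which is what Bott--Chern and Aeppli cohomology respect. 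One sign subtlety needs checking: on $N_\del A$ the operator $\del$ restricts to $(-1)^m d_m$, since the other faces vanish. So the restricted differentials agree with those in the definition of $N_\del A$.

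Second step: repeat in the vertical direction. Apply the same construction to $N_\del A$, which is a sub-bisimplicial object in the vertical direction because the $\dbar_j,\overline{s}_j$ preserve $\bigcap_{i<m}\ker d_i$. This gives $i_\delbar:N_\delbar N_\del A\to N_\del A$ with $p_\delbar$ and $h_\delbar$ commuting with $\del$. One observes that $N_\delbar N_\del A=NA$. Composing yields $i=i_\del i_\delbar$, with inverse $p_\delbar p_\del$. The homotopy $i p\simeq \mathrm{id}$ is assembled from $h_\del$ and $i_\del h_\delbar p_\del$. Naturality in $A$ is then immediate, since every map involved is a formula in the bisimplicial structure maps.

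The main obstacle is making precise which notion of chain homotopy equivalence of bicomplexes is intended, so that it actually induces isomorphisms on Bott--Chern and Aeppli homology. A single total homotopy does not suffice for that. I would therefore state and prove the stronger bigraded form: there are homotopies $h_\del$ of bidegree $(1,0)$ commuting with $\delbar$ and $h_\delbar$ of bidegree $(0,1)$ commuting with $\del$. I would then argue directly that each step $i_\del$, $i_\delbar$ induces an isomorphism on $\ker\del\cap\ker\delbar/\im\del\delbar$, and dually on Aeppli homology. A second, smaller check is confirming that the classical $p$ and $h$ indeed involve only horizontal operators, which is clear from their standard formulas.
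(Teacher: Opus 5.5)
Your two reduction steps are the same as the paper's: Dold--Kan in each direction separately, with naturality making the horizontal data commute with $\delbar$ and the vertical data commute with $\del$. Your assembled identity $1-i_\del i_\delbar p_\delbar p_\del=[\del,h_\del]+[\delbar,\,i_\del h_\delbar p_\del]$ is correct, and it shows that $i$ is a natural homotopy equivalence of total complexes. The gap is the step you postpone to the end: that each of $i_\del$, $i_\delbar$ induces isomorphisms on Bott--Chern and Aeppli homology. A one-directional homotopy compatible with the other differential does not give this. For $x\in\ker\del\cap\ker\delbar$ you only get $x-i_\del p_\del x=\del(h_\del x)$ with $h_\del x\in\ker\delbar$, and $\del(\ker\delbar)\not\subset\im\del\delbar$ in general. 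For example, take the bicomplex with $\ZZ$ in bidegrees $(1,0)$ and $(0,0)$, $\del=\mathrm{id}$ and $\delbar=0$. It is contracted by a $\del$-homotopy commuting with $\delbar$, yet $H^\BC_{0,0}=\ZZ$ and $H^\A_{1,0}=\ZZ$. What would suffice is a single $T$ of bidegree $(1,1)$ with $1-i\circ f=[\del,[\delbar,T]]$, which is what the paper claims to produce.

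No such $T$ exists, and no other argument can close the gap, because the statement is false in the sense needed for the subsequent corollary. Take the constant bisimplicial abelian group $A=\ZZ$. On $A_{1,0}$ one has $\del=d_0-d_1=0$ and $\delbar=0$, and $\del\delbar=0$ on $A_{2,1}$ since there $\delbar=\dbar_0-\dbar_1=0$. So $H^\BC_{1,0}(A)=\ZZ$, whereas $NA$ is $\ZZ$ concentrated in bidegree $(0,0)$. Likewise the $\delbar$-homology of $A$ is $\ZZ$ in every bidegree $(m,0)$. The failure also occurs in positive bidegrees. Let $A_{m,n}=B_m$ with identity vertical structure maps, where $NB=\ZZ$ in degree $1$; then $H^\BC_{1,1}(A)=\ZZ$ but $NA_{1,1}=0$.

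The paper's proof breaks at exactly this point. Since $T_\del$ commutes with $\delbar$ and $i_\delbar f_\delbar$ commutes with $\del$, one computes $[\delbar,T_\del T_\delbar]=T_\del(1-i_\delbar f_\delbar)$. Hence $[\del,[\delbar,T_\del T_\delbar]]=(1-i_\del f_\del)(1-i_\delbar f_\delbar)$, which is $0$ on $A_{1,0}$ in the example, while $1_A-i\circ f$ is the identity there.

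So your proposal proves what is actually true: the total-complex version, with the homotopy correctly written as a sum rather than a product. The upgrade to Bott--Chern, Aeppli or Dolbeault invariance cannot be proved. Comparisons with $\pi^\BC$ and $\pi^\A$ have to go through $NA$ directly, as the proof of \Cref{homotopy-vs-homology-BC} in fact does before its final sentence.
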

\begin{proof}
	Let $i_\del$ (resp. $i_\delbar$) be the inclusion of $N_\del A$ (resp. $N_\delbar A$) into $A$. Then by Dold-Kan, for each fixed $m,n$ there are chain maps
	\begin{align*}
		& f_\del: A_{\bullet, n}\to (N_\del A)_{\bullet,n}\\
		& f_\delbar: A_{m,\bullet}\to (N_\delbar A)_{m,\bullet}
	\end{align*}
	such that $f_\del\circ i_\del=1_{N_\del A}$, $ f_\delbar\circ i_\delbar=1_{N_\delbar A}$, and there are chain homotopies
	\begin{align*}
		& T_\del: A_{\bullet, n}\to A_{\bullet+1,n}\\
		& T_\delbar: A_{m,\bullet}\to A_{m,\bullet+1}
	\end{align*}
	such that $1_A-i_\del\circ f_\del=[\del, T_\del]$ and $1_A-i_\delbar\circ f_\delbar=[\delbar, T_\delbar]$. Here $[-,-]$ is the standard graded commutator.
	
	Since $f_\del,T_\del,f_\delbar,T_\delbar$ are natural in simplicial abelian groups, we have that $\{f_\del, T_\del\}$ commute with $\delbar$ and $\{f_\delbar, T_\delbar\}$ commute with $\del$. Therefore $f=f_\del\circ f_\delbar: A\to NA$ commutes with both $\del$ and $\delbar$, and $f\circ i=1_{NA}$. (Note that $NA=N_\del (N_\delbar A)=N_\delbar(N_\del A)$.) Meanwhile
	\[
	T=T_\del\circ T_\delbar: A_{m,n}\to A_{m+1,n+1}
	\]
defines a chain homotopy $i\circ f\simeq 1_A$, namely $1_A-i\circ f=[\del,[\delbar, T]]$.
\end{proof}

\begin{corollary}
	The inclusion $i: NA\to A$ induces isomorphisms on Bott-Chern, Aeppli and Dolbeault homologies (i.e. $\del$- and $\delbar$-homologies).
\end{corollary}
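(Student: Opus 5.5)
The plan is to deduce the corollary formally from \Cref{DK-homotopy}. A chain homotopy equivalence of bicomplexes of the particular shape produced there, namely maps $i$ and $f$ commuting with both $\del$ and $\delbar$, with $f\circ i=1_{NA}$ and $1_A-i\circ f=[\del,[\delbar,T]]$, induces isomorphisms on every homology theory that is functorial for bicomplex maps and kills maps of the form $[\del,[\delbar,T]]$. So I will check, for each of the four homologies, that $i$ and $f$ induce maps and that $[\del,[\delbar,T]]$ induces zero. Then $f_*i_*=1$ and $i_*f_*=1$.

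Functoriality is immediate, since $i$ and $f$ commute with $\del$ and $\delbar$ and therefore preserve $\ker\del\cap\ker\delbar$, $\im\del\delbar$, $\ker\del\delbar$, $\im\del+\im\delbar$, and the kernels and images of $\del$ and $\delbar$ separately. It remains to show that $D:=[\del,[\delbar,T]]$ acts by zero. I will fix the sign convention so that, up to signs coming from the graded commutator, $D=\del\delbar T\pm\del T\delbar\pm\delbar T\del\pm T\delbar\del$. The signs do not matter, because each piece is handled separately. The key observation is that $D$ lies in the ideal generated by $\del$ on both sides, and likewise for $\delbar$.

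\textbf{Bott-Chern.} Let $x$ satisfy $\del x=\delbar x=0$. Then every term of $D$ except $\del\delbar Tx$ vanishes, so $Dx=\pm\del\delbar Tx\in\im\del\delbar$.

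\textbf{Aeppli.} Let $x$ satisfy $\del\delbar x=0$. The terms $\del\delbar Tx$ and $\del T\delbar x$ lie in $\im\del$. The term $\delbar T\del x$ lies in $\im\delbar$. The term $T\delbar\del x$ vanishes, since $\delbar\del=\del\delbar$ and $\del\delbar x=0$. Hence $Dx\in\im\del+\im\delbar$.

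\textbf{Dolbeault ($\del$-homology).} Let $\del x=0$. Then $Dx=\pm\del\delbar Tx\pm\del T\delbar x$, which lies in $\im\del$. The same reasoning handles $\delbar$-homology.

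Because $D$ acts by zero on each of these homologies, $i_*f_*=1$, and together with $f_*i_*=1$ this gives the corollary.

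The only point needing care is the precise sign convention for the iterated graded commutator, and the fact that in the paper's convention $\del\delbar=\delbar\del$ rather than anticommuting. I expect this to cause no trouble, since every term was treated separately and each argument used only vanishing and image membership, never cancellation between terms. If one prefers the normalized operators $\del',\delbar'$ from the footnote, the same term-by-term argument applies verbatim.
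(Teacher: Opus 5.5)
This is the same route as the paper: its proof is the single sentence ``chain homotopic maps induce isomorphisms,'' and your term-by-term check correctly shows that an operator of the form $D=[\del,[\delbar,T]]$ acts by zero on Bott-Chern, Aeppli, $\del$- and $\delbar$-homology. The gap is the input you take from \Cref{DK-homotopy}. The identity $1_A-i\circ f=[\del,[\delbar,T]]$ is false, and so is the corollary itself.

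Take the constant bisimplicial abelian group $A_{m,n}=\ZZ$ with all structure maps the identity. Then $\del\colon A_{m,n}\to A_{m-1,n}$ is $\sum_i(-1)^i$, which is $0$ for $m$ odd and the identity for $m\ge 2$ even; likewise for $\delbar$. Since $d_0$ and $\dbar_0$ are injective, $NA$ is $\ZZ$ concentrated in bidegree $(0,0)$.

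The generator $x\in A_{0,1}$ has $\del x=0$ and $\delbar x=\dbar_0x-\dbar_1x=0$. Also $\del(A_{1,1})=0$, because $d_0-d_1=0$. Hence $x$ gives a nonzero class in $H^\BC_{0,1}(A)$ and in $\del$-homology, although $NA_{0,1}=0$. In the same way, the generator of $A_{0,2}$ is a nonzero Aeppli class, since $\del(A_{1,2})=0=\delbar(A_{0,3})$. The generator of $A_{1,0}$ is a nonzero class in $\delbar$-homology.

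Your own computation shows that no $T$ can satisfy the lemma. On $A_{0,1}$ we have $f=0$, so $1-if$ is the identity there. Yet $[\del,[\delbar,T]]x=\del\delbar Tx\in\del(A_{1,1})=0$.

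The lemma's proof goes wrong as follows. Set $P=i_\del f_\del$ and $Q=i_\delbar f_\delbar$. Using the commutations asserted in that proof, the composite $[\del,T_\del]\circ[\delbar,T_\delbar]=(1-P)(1-Q)=1-P-Q+PQ$ is what has the shape $[\del,[\delbar,T_\del T_\delbar]]$. But $1-if=1-PQ$, and the two differ by $P+Q-2PQ$.

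What is actually true is $1-if=[\del,T_\del]+P\,[\delbar,T_\delbar]$, a sum of a $\del$-homotopy and a $\delbar$-homotopy. This makes $i$ a homotopy equivalence of total complexes. On a Bott-Chern cycle, however, it only gives $x-ifx\in\im\del+\im\delbar$, and it gives no control of $\del$- or $\delbar$-homology separately.

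Since $A=NA\oplus DA$ as bicomplexes (\Cref{DK-iso}), $i_*$ is split injective on all four homologies. What fails is surjectivity. $DA$ can contain pieces like the vertical isomorphisms $A_{0,2k}\to A_{0,2k-1}$ in the example, which carry Bott-Chern homology at the target and Aeppli homology at the source.

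So no argument can prove the statement as written. The correct comparison is with $NA$ itself, which is what \Cref{character-BC} computes. The last step of \Cref{homotopy-vs-homology-BC} inherits the same problem.
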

\begin{proof}
	Chain homotopic maps induce isomorphisms on these groups.
\end{proof}

\begin{definition}[degenerate sub-bicomplex]
	Let $A$ be a bisimplicial abelian group. Let $D_\del A_{m,n}$ and $D_\delbar A_{m,n}$ be the subgroups of $A_{m,n}$ given by
	\begin{align*}
		D_\del A_{m,n} & =\im(\oplus_{i=0}^{m-1} A_{m-1,n}\xrightarrow{\oplus s_i} A_{m,n})\\
		D_\delbar A_{m,n} &=\im(\oplus_{j=0}^{n-1} A_{m,n-1}\xrightarrow{\oplus \overline{s}_j} A_{m,n})
	\end{align*}
	Then $DA:=D_\del A+D_\delbar A$ forms a sub-bicomplex of the Moore bicomplex of $A$, called the \textit{degenerate sub-bicomplex}.
\end{definition}
\begin{lemma}\label{DK-iso}
	Let $A$ be a bisimplicial abelian group. Then the composite
	\[
		NA\xrightarrow{i} A\xrightarrow{p} A/DA
	\]
	is a natural isomorphism of bicomplexes.
\end{lemma}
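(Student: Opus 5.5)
We reduce the statement to the classical one-directional Dold–Kan fact, applied separately in the two simplicial directions, in the same way \Cref{DK-homotopy} was obtained. Recall the simplicial statement: for a simplicial abelian group $B$, the composite $NB\to B\to B/DB$ is an isomorphism of chain complexes, naturally in $B$. Equivalently, there is a natural direct-sum decomposition $B_m=N B_m\oplus DB_m$.

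First, fix $n$ and apply this to the horizontal slice $A_{\bullet,n}$. This gives $A_{m,n}=(N_\del A)_{m,n}\oplus (D_\del A)_{m,n}$. Both summands are preserved by every $\dbar_j$ and $\overline{s}_j$: they commute with all $d_i$ and $s_i$, so they preserve $\bigcap_{i<m}\ker d_i$ and the images of the $s_i$. Hence $N_\del A$ and $D_\del A$ are bisimplicial abelian groups in the vertical direction, i.e.\ vertical simplicial abelian groups for each $m$. Moreover, by naturality of the classical decomposition with respect to the vertical structure maps, the splitting $A=N_\del A\oplus D_\del A$ is a splitting of vertical simplicial abelian groups.

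Next, apply the vertical Dold–Kan decomposition to each summand. This gives
\[
A=N_\delbar N_\del A\oplus D_\delbar N_\del A\oplus N_\delbar D_\del A\oplus D_\delbar D_\del A .
\]
The first summand is $NA$, as already noted in the proof of \Cref{DK-homotopy}. The key claim is that the remaining three summands add up to $DA=D_\del A+D_\delbar A$. Since $D_\delbar$ is additive with respect to direct sums, we have $D_\delbar A=D_\delbar N_\del A\oplus D_\delbar D_\del A$. Also $D_\del A=N_\delbar D_\del A\oplus D_\delbar D_\del A$. Therefore $D_\del A+D_\delbar A$ equals the sum of the last three summands, and it intersects $NA$ trivially. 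This gives $A=NA\oplus DA$ as graded abelian groups, so $p\circ i$ is bijective in each bidegree.

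It remains to check compatibility with the differentials. The composite $p\circ i$ is a map of bicomplexes provided $DA$ is a sub-bicomplex, which the definition asserts. The restricted operators on $NA$ are $(-1)^m d_m$ and $(-1)^n\dbar_n$. These agree with $\del,\delbar$ because the other faces vanish on $NA$. Naturality is clear, since every construction used is natural in $A$.

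The main obstacle is the bookkeeping in the second step. One must verify that $D_\delbar$ applied to the vertical simplicial group $D_\del A$ really is the sub-simplicial group of $D_\del A$ generated by the $\overline{s}_j$, and that it coincides with $D_\del A\cap D_\delbar A$. That is, the images in $A$ of the pieces match $DA$ exactly, not merely up to containment. I would handle this by invoking naturality of the classical splitting along the inclusion $D_\del A\hookrightarrow A$.
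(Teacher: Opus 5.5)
Your proposal is correct and takes essentially the paper's route: apply one-directional Dold--Kan in each simplicial direction, using that it is natural with respect to the structure maps of the other direction. Your four-fold splitting $A=N_\delbar N_\del A\oplus D_\delbar N_\del A\oplus N_\delbar D_\del A\oplus D_\delbar D_\del A$ just makes explicit the step the paper compresses into ``consequently''; the obstacle you flag is already settled by the additivity of $D_\delbar$ that you invoke, and the identification $D_\del A\cap D_\delbar A=D_\delbar D_\del A$, while true, is not needed.
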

\begin{proof}
By Dold-Kan, for all fixed $m,n$
\begin{align*}
	& (N_\del A)_{\bullet, n}\xrightarrow{i_\del} A_{\bullet, n}\xrightarrow{p_\del} (A/D_\del A)_{\bullet,n}\\
	& (N_\delbar A)_{m,\bullet}\xrightarrow{i_\delbar} A_{m,\bullet}\xrightarrow{p_\delbar} (A/D_\delbar A)_{m,\bullet}
\end{align*}
are natural isomorphisms of chain complexes. It follows that
\begin{align*}
	& N_\del A\xrightarrow{i_\del} A\xrightarrow{p_\del} A/D_\del A \\
	& N_\delbar A\xrightarrow{i_\delbar} A\xrightarrow{p_\delbar} A/D_\delbar A
\end{align*}
and consequently
\[
NA\xrightarrow{i} A\xrightarrow{p} A/DA
\]
are natural isomorphisms of bicomplexes.
\end{proof}

\begin{definition}[Eilenberg-MacLane space]
	Let $C_{\bullet,\bullet}$ be a $\NN\times\NN$-graded bicomplex. We define a bisimplicial set $K(C)$ by
	\[
	K(C)_{m,n}=\bigoplus\limits_{[m]\twoheadrightarrow [k]}\bigoplus\limits_{[n]\twoheadrightarrow [l]} C_{k,l}.
	\]
	which carries a natural bisimplicial structure induced by the face and degeneracy maps. We will call $K(C)$ the Eilenberg-MacLane space of $C$. 
\end{definition}

\begin{lemma}\label{DK-equivalence}
	The functors
	\[
	N: \mathbf{BsAb}\leftrightarrows \mathbf{BiCo_+} : K
	\]
	define an equivalence of categories.
\end{lemma}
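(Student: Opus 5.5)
The plan is to reduce the statement to the classical Dold--Kan equivalence applied in each simplicial direction separately. Bisimplicial abelian groups are the same as simplicial objects in simplicial abelian groups, $\mathbf{BsAb}\simeq \mathrm{Fun}(\simplex^{op},\mathbf{sAb})$. Dold--Kan gives an equivalence $N:\mathbf{sAb}\rightleftarrows \mathbf{Ch}_+ : K$ of additive categories. Postcomposition with an equivalence of categories is again an equivalence on functor categories, so applying it in the vertical direction gives $\mathrm{Fun}(\simplex^{op},\mathbf{sAb})\simeq \mathrm{Fun}(\simplex^{op},\mathbf{Ch}_+)$. The right-hand side is the category of simplicial objects in $\mathbf{Ch}_+$, which is the same as chain complexes (in $m$) of simplicial abelian groups (in $n$) with commuting simplicial operators. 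Applying Dold--Kan once more in the horizontal direction, now levelwise in the chain degree, gives an equivalence with chain complexes of chain complexes, i.e.\ $\mathbf{BiCo_+}$. Here one uses that $N$ and $K$ are additive functors, so they carry the differential of the other direction to a differential and preserve the relation $d^2=0$.

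The next step is to identify the composite functors with the $N$ and $K$ defined in the paper. For $N$, the iterated normalization is $N_\del(N_\delbar A)$, whose boundary operators are $(-1)^m d_m$ and $(-1)^n\dbar_n$; this is exactly $NA=N_\del A\cap N_\delbar A$, as already noted in the proof of \Cref{DK-homotopy}. For $K$, the classical formula $K(C)_n=\bigoplus_{[n]\twoheadrightarrow[k]}C_k$, applied twice, gives $\bigoplus_{[m]\twoheadrightarrow[k]}\bigoplus_{[n]\twoheadrightarrow[l]}C_{k,l}$. Its face and degeneracy operators are computed by the usual epi--mono factorization in each direction independently. The operators of one direction act only on the first index set and those of the other only on the second, so they commute and agree with the natural bisimplicial structure described in the definition of $K(C)$.

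The natural isomorphisms are then obtained by iterating the one-dimensional ones. For $NK(C)\cong C$, I compute $N_\del N_\delbar K_\del K_\delbar(C)$. Since $N_\delbar$ commutes with $K_\del$, because both act in different variables and the one-dimensional $N,K$ are additive and natural, this reduces to $N_\del K_\del\, N_\delbar K_\delbar C\cong C$. For $KN(A)\cong A$, the one-dimensional natural isomorphism $K(NB)\cong B$ is applied first to each vertical slice and then to each horizontal slice of the result, using naturality to see that the maps respect the other direction's structure. Alternatively, one can use \Cref{DK-iso} to identify $NA$ with $A/DA$ and write down the map $K(NA)\to A$ directly via $(\sigma,\tau,x)\mapsto \sigma^*\tau^* x$. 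Bijectivity then follows slice by slice from the classical theorem.

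I expect the main obstacle to be bookkeeping rather than a conceptual difficulty. One has to check that the one-dimensional Dold--Kan isomorphisms are compatible with the face and degeneracy maps in the other direction; this is precisely naturality with respect to maps of simplicial abelian groups. One also has to keep track of the sign conventions, $(-1)^m d_m$ versus the alternating sum, so that the differentials match on the nose. I would handle this by fixing the standard conventions of \cite{goerss2009simplicial} for one-dimensional Dold--Kan and noting that all sign conventions give isomorphic complexes.
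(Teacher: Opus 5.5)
Your proposal is correct and follows the paper's argument, which simply applies the one-dimensional Dold--Kan correspondence in the horizontal and vertical directions. You supply the bookkeeping the paper leaves implicit: the functor-category reformulation, identifying the iterated $N$ and $K$ with the paper's formulas, and checking compatibility via naturality and additivity. One harmless slip: after normalizing in the vertical ($n$) direction you get chain complexes in $n$ of simplicial abelian groups in $m$, not the reverse as written, though your next step (Dold--Kan in the horizontal direction, levelwise in the chain degree) already uses the correct orientation.
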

\begin{proof}
	Apply the simplicial Dold-Kan correspondence in both horizontal and vertical directions.
\end{proof}

In summary, we have:
\begin{proposition}[Dold-Kan correspondence]\label{Dold-Kan} Let $A$ be a bisimplicial abelian group. Then we have the following:
\begin{enumerate}[(1)]
	\item The composition 
		\[
		NA\xrightarrow{i} A\xrightarrow{p} A/DA
		\]
		is an isomorphism of bicomplexes.
	\item The functors
	\[
	N: \mathbf{BsAb}\leftrightarrows \mathbf{BiCo_+} : K
	\]
	define an equivalence of categories.
	\item $i: NA\to A$ is a natural chain homotopy equivalence of bicomplexes.
\end{enumerate}
\end{proposition}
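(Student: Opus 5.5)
This proposition only collects \Cref{DK-iso}, \Cref{DK-equivalence} and \Cref{DK-homotopy}, so the plan is to cite those three lemmas and make sure each rests on the one-variable Dold--Kan theorem applied slice by slice. Parts (1), (2) and (3) are literally the statements of \Cref{DK-iso}, \Cref{DK-equivalence} and \Cref{DK-homotopy}. The real work is in justifying those lemmas, and the key tool throughout is naturality: a construction that is natural in simplicial abelian groups commutes with the structure maps of the other simplicial direction.

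For (1), I apply the simplicial Dold--Kan isomorphism $N_\del A_{\bullet,n}\cong (A/D_\del A)_{\bullet,n}$ to each horizontal slice. By naturality it commutes with $\dbar_j,\overline{s}_j$, so $N_\del A\to A/D_\del A$ is an isomorphism of bisimplicial-in-one-direction objects. I then apply the vertical Dold--Kan to the vertical slices of the simplicial abelian group $N_\del A\cong A/D_\del A$. Two identifications are needed here. First, $N_\delbar N_\del A=NA$. Second, $(A/D_\del A)/D_\delbar(A/D_\del A)=A/(D_\del A+D_\delbar A)$; this holds because the image of the vertical degeneracies in the quotient is the image of $D_\delbar A$. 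It remains to check that the composite isomorphism is induced by $i$ followed by $p$.

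For (2), I iterate the simplicial Dold--Kan equivalence. A bisimplicial abelian group is a simplicial object in simplicial abelian groups, which is equivalent to a simplicial object in $\mathbf{Ch}_{\ge 0}$. The horizontal Dold--Kan functor is an additive equivalence, so it extends levelwise. Applying it in the vertical direction then gives double complexes concentrated in $\NN\times\NN$, that is, $\mathbf{BiCo_+}$. I will also check that the resulting inverse is the formula $K(C)$ in the definition. This holds because the simplicial $K$ is $\bigoplus_{[m]\twoheadrightarrow[k]}C_k$ and the formula composes in the two directions.

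For (3), I build homotopies that are natural in each direction, as \Cref{DK-homotopy} does. The one genuine subtlety, and the step I would check carefully, is the sign and commutator bookkeeping in $1-if=[\del,[\delbar,T]]$ with $T=T_\del T_\delbar$. Written out, $1-i_\del f_\del i_\delbar f_\delbar = (1-i_\del f_\del)+i_\del f_\del(1-i_\delbar f_\delbar)$, which is a sum of a $\del$-homotopy and a $\delbar$-homotopy. In particular it is not a single composite.

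The notion of ``chain homotopy equivalence of bicomplexes'' that is actually needed is the one that forces isomorphisms on Bott--Chern and Aeppli homology. For that one wants $1-if=\del\delbar T'+\ldots$ in the form that preserves $\ker\del\cap\ker\delbar$ modulo $\im\del\delbar$. My fallback is the product formula $1-if=[\del,T_\del]+[\delbar,T_\delbar]-[\del,T_\del][\delbar,T_\delbar]$. The cross term $[\del,T_\del][\delbar,T_\delbar]$ equals $[\del,[\delbar,T_\del T_\delbar]]$ up to sign, because $T_\del$ commutes with $\delbar$ and $T_\delbar$ commutes with $\del$. Together with the fact that $i$ and $f$ are bicomplex maps that are quasi-isomorphisms for each differential, this gives the conclusion. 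Alternatively, one can argue that the acyclic complement $\ker f$ is contractible for both differentials independently, and that such a complement has vanishing Bott--Chern and Aeppli homology.
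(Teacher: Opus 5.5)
Your arguments for (1) and (2) are the paper's: one-variable Dold--Kan applied slice by slice, with naturality making each slicewise construction commute with the other direction's structure maps. Your identifications $N_\delbar N_\del A=NA$ and $(A/D_\del A)/D_\delbar(A/D_\del A)=A/DA$ are correct. For (3) you are right to distrust the paper's identity $1-if=[\del,[\delbar,T]]$ with $T=T_\del T_\delbar$. Write $P=i_\del f_\del$ and $Q=i_\delbar f_\delbar$. Then $1-PQ=(1-P)+(1-Q)-(1-P)(1-Q)$, and only the last term has the double-commutator form. So the identity used in the proof of \Cref{DK-homotopy} is false.

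The gap is in your rescue, because the conclusion you aim at is false. It is not true that $i$ and $f$ are quasi-isomorphisms for $\del$ and for $\delbar$ separately, nor that $\ker f$ is contractible for each differential. Take $A$ to be the constant bisimplicial abelian group $\ZZ$. Then $NA$ is $\ZZ$ concentrated in bidegree $(0,0)$. In $A$, the operator $\del$ on $A_{m,n}$ is $\sum_i(-1)^i$, which is $0$ for $m$ odd and the identity for $m\ge 2$ even; likewise for $\delbar$. Hence $A_{1,0}=\ZZ$ lies in $\ker\del\cap\ker\delbar$. Nothing hits it under $\delbar$, since $\delbar=\dbar_0-\dbar_1=0$ on $A_{1,1}$. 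Nothing hits it under $\del\delbar$, since $\delbar=0$ on $A_{2,1}$. So $H_\delbar(A)_{1,0}=H^\BC_{1,0}(A)=\ZZ$, whereas $NA_{1,0}=0$. Similarly $H^\A_{2,0}(A)=\ZZ$, while $NA_{2,0}=0$.

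In your product formula the failure is visible: for $x\in\ker\del\cap\ker\delbar$, the term $[\del,T_\del]x=\del T_\del x$ is only $\del$-exact, not $\del\delbar$-exact.

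What your decomposition does prove is (3) for total complexes: $1-if=[\del,T_\del]+[\delbar,\,PT_\delbar]$. After the usual sign twist making the differentials anticommute, this is a natural null-homotopy of $1-if$ for the total differential. Concretely, on $A_{m,n}$ replace $\delbar$ by $(-1)^m\delbar$ and $PT_\delbar$ by $(-1)^mPT_\delbar$.

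No notion of bicomplex homotopy equivalence strong enough to give Dolbeault, Bott--Chern or Aeppli isomorphisms can hold for $i$. Consequently the Bott--Chern comparison must be stated as $\pi^\BC_{m,n}(A,0)\cong H^\BC_{m,n}(NA)$, as in the abstract, not with $A$ in place of $NA$.
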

\begin{proof}
	These are \Cref{DK-iso}, \Cref{DK-equivalence} and \Cref{DK-homotopy}. 
\end{proof}

\subsection{Bott-Chern and Aeppli homotopy groups}
With Dold-Kan correspondence, we can compute Bott-Chern and Aeppli homotopy. Recall that the Bott-Chern and Aeppli homology of a bicomplex $C$ are
\[
H^\BC=\frac{\ker \del\cap\ker\delbar}{\im \del\delbar},\quad H^\A=\frac{\ker\del\delbar}{\im\del+\im\delbar}.
\]

\begin{proposition}\label{homotopy-vs-homology-BC} Let $A$ be a bisimplicial abelian group. Then there are natural group isomorphisms
\begin{align*}
	\pi_{m,n}^\BC(A,0) &\cong H_{m,n}^\BC(A)
\end{align*}
for $m\ge 0$, $n\ge 0$.
\end{proposition}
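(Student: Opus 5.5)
Identify representatives with normalized cycles. A map $\alpha:\Delta^{m,n}\to A$ rel $\delta\Delta^{m,n}$ is, by Yoneda, an element $a\in A_{m,n}$ with $d_ia=0$ for all $0\le i\le m$ and $\dbar_ja=0$ for all $0\le j\le n$ (the base point is $0$). Such an $a$ lies in $NA_{m,n}$ and satisfies $d_m a=0$ and $\dbar_n a=0$. Conversely, every element of $NA_{m,n}\cap\ker d_m\cap\ker\dbar_n$ is such a map. So the set of representatives is exactly $\ker\del\cap\ker\delbar$ in the normalized bicomplex $NA$.

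Next, describe the relation. By \Cref{character-BC}, together with the fact that relative homotopy is an equivalence relation, $\alpha\sim\beta$ iff $\alpha-\beta\sim 0$. To see this, subtract pointwise: if $\alpha\sim_h\beta$, then $h-\overline{s}_0s_0\beta$-type corrections give a homotopy $\alpha-\beta\sim0$, and conversely. This uses that $A$ is a group, so $\Fun(\Delta^{m,n},A)$ is a bisimplicial abelian group and homotopies can be added. By \Cref{character-BC}, $a\sim 0$ iff there is $\theta\in A_{m+1,n+1}$ with $d_i\theta=0$ for $i\le m$, $\dbar_j\theta=0$ for $j\le n$, and $d_{m+1}\dbar_{n+1}\theta=a$. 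That is, $\theta\in NA_{m+1,n+1}$ and $a=\pm\del\delbar\theta$ in $NA$, where the sign $(-1)^{m+n}$ is harmless. Hence $\pi^\BC_{m,n}(A,0)\cong H^\BC_{m,n}(NA)$ as sets. The corollary of \Cref{DK-homotopy} then gives $H^\BC(NA)\cong H^\BC(A)$. Naturality is clear, since a bisimplicial homomorphism preserves $NA$ and both identifications.

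Finally, show the Bott-Chern product agrees with addition. For $m,n\ge1$, with $\alpha,\beta$ as above, construct the filler $\theta$ explicitly. Take $\theta=s_{m-1}\overline{s}_{n-1}\alpha+s_m\overline{s}_n\beta$ (adjusted if needed). The simplicial identities should give $d_{m-1}\theta=\overline{s}_{n-1}\alpha$ and $d_{m+1}\theta=\overline{s}_n\beta$, with the other faces zero because $\alpha,\beta$ vanish on faces. Then $d_m\dbar_n\theta=\alpha+\beta$, so $[\alpha]\star[\beta]=[\alpha+\beta]$ by \Cref{BC-product-well-define}. The main obstacle is checking the face identities for this $\theta$, in particular $d_{m-1}$ applied to $s_m\overline{s}_n\beta$ and $\dbar_{n-1}$ applied to the $\alpha$ term. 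If they fail, I would instead use the unit lemma \Cref{BC-product-unit} and additivity of fillers: fillers for $(\alpha,0)$ and $(0,\beta)$ sum to a filler for $(\alpha,\beta)$, since all boundary conditions are linear. This linearity argument is robust and I would make it the primary route. When $m=0$ or $n=0$, only the group structure of $H^\BC$ is being transported.
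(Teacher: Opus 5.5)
Up to the last step your argument is the paper's: Yoneda identifies representatives with $\ker\del\cap\ker\delbar\subset NA_{m,n}$, and \Cref{character-BC} identifies the null-homotopic ones with $\del\delbar(NA_{m+1,n+1})$. Two of your additions are correct and sharpen the paper. First, subtracting the constant homotopy $\hat{s}_0\hat{\overline{s}}_0\beta$ (which is what your ``$\overline{s}_0s_0\beta$'' should be) shows that homotopy is a congruence for $+$. The paper uses this tacitly when it speaks of the kernel of the projection. Second, your first filler needs no adjustment. $s_{m-1}\overline{s}_{n-1}\alpha+s_m\overline{s}_n\beta$ is the sum of the two fillers in \Cref{BC-product-unit}; for instance $d_{m-1}s_m\overline{s}_n\beta=s_{m-1}\overline{s}_nd_{m-1}\beta=0$. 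Hence $[\alpha]\star[\beta]=[\alpha+\beta]$ directly, which replaces the paper's appeal to Eckmann--Hilton.

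The step that fails is the final one, $H^\BC(NA)\cong H^\BC(A)$. You and the paper both take it from the corollary to \Cref{DK-homotopy}, but that corollary is false for Bott--Chern homology, and so is the statement as written. Let $A_{m,n}=B_m$ be constant in the vertical direction, for a simplicial abelian group $B$. Then $\delbar=\dbar_0-\dbar_1=0$ on $A_{m,1}$, while $\delbar:A_{m+1,2}\to A_{m+1,1}$ is $\dbar_0-\dbar_1+\dbar_2=1$. Hence $H^\BC_{m,1}(A)\cong H_m(B)$. On the other hand $NA_{m,1}\subset\ker\dbar_0=0$, so $\pi^\BC_{m,1}(A,0)=0$ by your own first step. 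Taking $B$ constant at $\ZZ$ (with $m=0$), or $B$ with normalized complex $\ZZ$ in degree $1$ (with $m=1$), gives $\ZZ\neq 0$.

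The source of the error is that $T_\del$ and $T_\delbar$ contract rows and columns only separately, which suffices for Dolbeault homology but not for Bott--Chern homology. Indeed, the identity $1_A-if=[\del,[\delbar,T]]$ asserted in \Cref{DK-homotopy} already fails on $A_{0,1}$ for the constant bisimplicial group $\ZZ$. There the left side is the identity, since $NA_{0,1}=0$. The right side vanishes, since $\del:A_{1,1}\to A_{0,1}$ is $d_0-d_1=0$ and $A_{-1,1}=0$.

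What your argument correctly proves is $\pi^\BC_{m,n}(A,0)\cong H^\BC_{m,n}(NA)$, which is the form stated in the abstract. You should stop there rather than pass to the unnormalized Moore bicomplex.
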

\begin{remark}
	The Bott-Chern homotopy set $\pi_{m,n}^\BC(A,0)$ inherits an abelian group structure from $A$ for all $m\ge 0$, $n\ge 0$. For $m\ge 1$ or $n\ge 1$, this group structure coincides with the one we previously studied by a standard Eckmann-Hilton argument. The isomorphism stated above is with respect to the group structures induced by $A$. The same applies to Aeppli homotopy.
\end{remark}

\begin{proof}[Proof of \Cref{homotopy-vs-homology-BC}]
	The set of maps $\alpha:\Delta^{m,n}\to A$ rel $\delta\Delta^{m,n}$ may be identified as the set
	\[
	\ker d_m\cap \ker \dbar_n\subset NA_{m,n}.
	\]
	The kernel of the projection
	\[
	\ker d_m\cap \ker \dbar_n\twoheadrightarrow\pi_{m,n}^\BC(A,0)
	\]
	may be identified with $\im (d_{m+1}\dbar_{n+1})$ by \Cref{character-BC}. This gives a natural isomorphism
	\[
	\pi_{m,n}^\BC(A,0)\cong H_{m,n}^{\BC}(NA).
	\]
	By Dold-Kan the latter is naturally isomorphic to $H_{m,n}^{\BC}(A)$.
\end{proof}
	
Next we compute Aeppli homotopy.
\begin{lemma}\label{BC-vanish-imply-A-vanish}
	Let $C$ be a bicomplex of abelian groups with $H_{m,n}^\BC(C)=0$ for all $m, n$. Then $H_{m,n}^\A(C)=0$ for all $m, n$.
\end{lemma}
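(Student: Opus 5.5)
The plan is a direct diagram chase: starting from an Aeppli cycle, I apply the vanishing of Bott-Chern homology three times, each time to a suitable Bott-Chern cycle, and subtract off pieces in $\im\del+\im\delbar$ until what remains is itself a Bott-Chern cycle. I use the paper's convention that $\del\delbar=\delbar\del$; under the anticommuting convention only signs change. In bidegrees where a target group is zero (e.g.\ $n=0$, so that $\delbar x=0$ automatically), the corresponding step is vacuous and the argument goes through unchanged.

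\emph{Killing $\delbar x$.} Fix $x\in C_{m,n}$ with $\del\delbar x=0$. Put $y=\delbar x\in C_{m,n-1}$. Then $\delbar y=\delbar^2x=0$ and $\del y=\del\delbar x=0$, so $y$ is a Bott-Chern cycle. Since $H^\BC_{m,n-1}(C)=0$, there is $z\in C_{m+1,n}$ with $y=\del\delbar z$. Set $x'=x-\del z$. Then
\[
\delbar x'=\delbar x-\delbar\del z=y-\del\delbar z=0 .
\]
We have changed $x$ only by an element of $\im\del$.

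\emph{Killing $\del x'$.} Put $y'=\del x'\in C_{m-1,n}$. Then $\del y'=0$ and $\delbar y'=\del\delbar x'=0$, so $y'$ is again a Bott-Chern cycle. Vanishing of $H^\BC_{m-1,n}(C)$ gives $w\in C_{m,n+1}$ with $y'=\del\delbar w$. Set $x''=x'-\delbar w$. Then
\[
\del x''=\del x'-\del\delbar w=0,\qquad \delbar x''=\delbar x'-\delbar^2 w=0 .
\]
So $x''$ is a Bott-Chern cycle in bidegree $(m,n)$.

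\emph{Conclusion.} By $H^\BC_{m,n}(C)=0$ we get $x''=\del\delbar u$ for some $u$. Therefore
\[
x=\del z+\delbar w+\del\delbar u\in\im\del+\im\delbar ,
\]
and hence $H^\A_{m,n}(C)=0$.

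I expect no real obstacle here. The only point needing care is the order of the steps: the $\delbar$-correction must be made first, so that subtracting $\delbar w$ in the second step does not spoil the property $\delbar x'=0$, which it does not since $\delbar^2=0$.
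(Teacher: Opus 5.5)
Your proof is correct and is essentially the paper's argument carried out on elements. The paper introduces $H^{\mathrm{dot}}_{m,n}=(\ker\del\cap\ker\delbar)/(\im\del+\im\delbar)$. Your two correction steps are exactly the proof that the kernel of $(\del,\delbar)\colon H^{\A}_{m,n}\to H^{\BC}_{m-1,n}\oplus H^{\BC}_{m,n-1}$ is the image of $H^{\mathrm{dot}}_{m,n}$, and your final step is the surjection $H^{\BC}_{m,n}\to H^{\mathrm{dot}}_{m,n}$.

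One minor point: the order of the two corrections does not matter. Since $\del(\del z)=0$ and $\delbar(\delbar w)=0$, each subtraction leaves the other quantity unchanged; in particular $\del x'=\del x$.
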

\begin{proof}
	Let us consider 
	\[
	H_{m,n}^{\mathrm{dot}}:=\dfrac{\ker \del\cap \ker\delbar}{\im \del+\im\delbar}
	\] then there are natural exact sequences
	\begin{enumerate}[(1)]
		\item $0\to H_{m,n}^\mathrm{dot}(C)\to H_{m,n}^\A(C)\xrightarrow{(\del,\delbar)} H_{m-1,n}^\BC(C)\oplus H_{m,n-1}^\BC(C)$.
		\item $H_{m+1,n}^\A(C)\oplus H_{m,n+1}^\A(C)\xrightarrow{\del\oplus\delbar} H_{m,n}^\BC(C)\to H_{m,n}^\mathrm{dot}(C)\to 0$.
	\end{enumerate}
	Therefore $H_{m,n}^\BC(C)=0$ for all $m,n$ implies that $H_{m,n}^\mathrm{dot}(C)=0$ by (2) and then by (1) $H_{m,n}^\A(C)=0$ for all $m,n$.
\end{proof}

\begin{proposition}\label{homologyLES}
	Let $0\to A\to B\to C\to 0$ be a short exact sequence of bicomplexes. Then there is an induced exact sequence:
	\[
	H_{m+1,n+1}^\A(A)\to H_{m+1,n+1}^\A(B)\to H_{m+1,n+1}^\A(C)\xrightarrow{\del\delbar} H_{m,n}^\BC(A)\to H_{m,n}^\BC(B)\to H_{m,n}^\BC(C)
	\]
	for all $m,n$.
\end{proposition}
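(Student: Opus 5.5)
We prove \Cref{homologyLES} by a direct diagram chase. Write the short exact sequence as $0\to A\xrightarrow{f} B\xrightarrow{g} C\to 0$. Since $f$ and $g$ commute with $\del$ and $\delbar$, they induce maps on $H^\A$ and $H^\BC$. These are the unlabeled arrows.

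\emph{The connecting map.} Let $c\in C_{m+1,n+1}$ with $\del\delbar c=0$. Choose a lift $b\in B_{m+1,n+1}$ with $g(b)=c$. Then $g(\del\delbar b)=0$, so $\del\delbar b=f(a)$ for a unique $a\in A_{m,n}$. Since $f$ is injective and $\del f(a)=\del\del\delbar b=0$, we get $\del a=0$; likewise $\delbar a=0$, using $\delbar\del=\del\delbar$. So $a$ defines a class in $H^\BC_{m,n}(A)$.

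To see this is well defined, we check two things.
\begin{itemize}
\item Changing the lift $b$ by $f(a')$ changes $a$ by $\del\delbar a'$, which is zero in $H^\BC$.
\item Changing $c$ by $\del c_1+\delbar c_2$: lift $c_1,c_2$ to $b_1,b_2$ and change $b$ by $\del b_1+\delbar b_2$. Then $\del\delbar$ of the change is $0$, since $\del^2=\delbar^2=0$.
\end{itemize}
So $\del\delbar$ descends to a map $H^\A_{m+1,n+1}(C)\to H^\BC_{m,n}(A)$.

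\emph{Exactness at the two outer terms.} Both composites vanish trivially. For the reverse inclusions:
\begin{itemize}
\item At $H^\BC(B)$: if $b$ is a $\del$- and $\delbar$-cycle and $g(b)=\del\delbar c'$, lift $c'$ to $b'$. Then $b-\del\delbar b'\in f(A)$, say $b-\del\delbar b'=f(a)$, and $a$ is a $\del$- and $\delbar$-cycle by injectivity of $f$.
\item At $H^\A(B)$: if $\del\delbar b=0$ and $g(b)=\del c_1+\delbar c_2$, lift $c_1,c_2$ to $b_1,b_2$. Then $b-\del b_1-\delbar b_2=f(a)$ with $\del\delbar a=0$.
\end{itemize}

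\emph{Exactness at $H^\A(C)$ and $H^\BC(A)$.}
\begin{itemize}
\item At $H^\A(C)$: the composite vanishes because if $c=g(b)$ with $\del\delbar b=0$, then $a=0$. Conversely, if $a=\del\delbar a'$, replace $b$ by $b-f(a')$. This new $b$ satisfies $\del\delbar b=0$ and still maps to $c$.
\item At $H^\BC(A)$: the composite vanishes because $f(a)=\del\delbar b$ is zero in $H^\BC(B)$. Conversely, suppose $f(a)=\del\delbar b$ with $a$ a $\del$- and $\delbar$-cycle. Then $c=g(b)$ satisfies $\del\delbar c=0$, and by construction $\del\delbar[c]=[a]$.
\end{itemize}

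The only delicate point is the independence of choices in defining $\del\delbar$, which needs both $\del^2=0$ and $\delbar^2=0$. It is handled above. The rest is routine, and naturality is clear from the construction.
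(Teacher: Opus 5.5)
Your proof is correct and takes the same approach as the paper. The connecting map is $[c]\mapsto[\del\delbar b]$ for a lift $b$ of $c$, and exactness follows by a diagram chase; the paper asserts well-definedness and exactness without detail, and you have supplied both correctly at all four interior terms.
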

\begin{proof}
	The connecting homomorphism $\del\delbar$ is defined as follows. For $x\in C$ with $\del\delbar x=0$, we choose a lift $\widetilde{x}\in B$. Then $\del\delbar\widetilde{x}$ is zero when passed to $C$ and therefore arises from $A$. The connecting homomorphism is the map $[x]\mapsto [\del\delbar\widetilde{x}]$. The map is well-defined and the exactness of the sequence is an easy diagram chase.
\end{proof}

\begin{proposition}\label{homotopy-vs-homology-A} Let $A$ be a bisimplicial abelian group. Then there are natural group isomorphisms
\begin{align*}
	\pi_{m,n}^\A(A,0) &\cong H_{m,n}^\A(A)
\end{align*}
for $m\ge 1$, $n\ge 1$.
\end{proposition}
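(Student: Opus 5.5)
The plan is to reduce the Aeppli statement to the Bott-Chern one, \Cref{homotopy-vs-homology-BC}, using the loop-space comparison \Cref{Aeppli-is-BC-of-loop} on the homotopy side and the exact sequence \Cref{homologyLES} on the homology side. Write $m=m'+1$, $n=n'+1$ with $m',n'\ge 0$. The path-loop sequence $\Omega A\to PA\xrightarrow{\mathrm{ev}}A$ consists of bisimplicial abelian groups, and $\mathrm{ev}$ is a homomorphism. The functors $\Fun(K,-)$ and pullbacks along the zero map preserve abelian group objects, so $PA$ and $\Omega A$ are bisimplicial abelian groups and $\Omega A=\ker(\mathrm{ev})$.

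First I would check that $\mathrm{ev}\colon PA\to A$ is levelwise surjective, so that $0\to\Omega A\to PA\to A\to 0$ is a short exact sequence of Moore bicomplexes. Given $x\in A_{m,n}$, I need $\alpha\colon\Delta^{m,n}\times\Delta^{1,1}\to A$ that vanishes on $\Delta^{m,n}\times\Lambda^{1,1}$ and restricts to $x$ on $\Delta^{m,n}\times d_1\dbar_1\Delta^{1,1}$. The natural first try is an explicit degeneracy formula, for instance pulling $x$ back along a retraction $\Delta^{m,n}\times\Delta^{1,1}\to\Delta^{m,n}$ and subtracting corrections; these corrections are constant along the $\Lambda^{1,1}$ legs. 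If that fails, I would use the Dold-Kan correspondence: write $PA$ as $K$ of an explicit cone-type bicomplex, or reduce to the case $A=K(C)$ for $C$ a single group in one bidegree, and exhibit the lift there. I expect this surjectivity to be the main technical obstacle. The reason is that $\delta_\A\Delta^{1,1}\hookrightarrow\Delta^{1,1}$ is not anodyne, so the fibration property of $\mathrm{ev}$ does not directly yield a lift.

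Next, $PA$ is contractible, so by the corollary to \Cref{character-A} all its Bott-Chern homotopy sets are trivial. By \Cref{homotopy-vs-homology-BC} this gives $H^\BC(PA)=0$ in all bidegrees, and \Cref{BC-vanish-imply-A-vanish} then gives $H^\A(PA)=0$. \Cref{homologyLES} applied to the short exact sequence above degenerates to an isomorphism
\[
\del\delbar\colon H^\A_{m'+1,n'+1}(A)\xrightarrow{\ \cong\ }H^\BC_{m',n'}(\Omega A).
\]
Combining this with \Cref{homotopy-vs-homology-BC} for $\Omega A$ and with \Cref{Aeppli-is-BC-of-loop}, I obtain the chain
\[
\pi^\A_{m,n}(A)\xrightarrow{\rho}\pi^\BC_{m',n'}(\Omega A)\cong H^\BC_{m',n'}(\Omega A)\cong H^\A_{m,n}(A).
\]
Each map in this chain is natural in $A$, since $P$, $\Omega$ and the connecting map are functorial.

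Finally, I need the composite to be a group isomorphism for the group structure induced by $A$. \Cref{Aeppli-is-BC-of-loop} gives compatibility with $\diamond$, and by the Eckmann-Hilton remark $\diamond$ coincides with the pointwise addition. For a more concrete description, I would also verify that the composite is the evident map. A map $\alpha\colon\Delta^{m,n}\to A$ rel $\delta_\A\Delta^{m,n}$ is exactly an element $x\in NA_{m,n}$ with $d_m\dbar_n x=0$, that is, an Aeppli cycle of $NA$. The composite sends $\alpha$ to its class in $H^\A(NA)\cong H^\A(A)$. This works because both $\rho$ and the algebraic $\del\delbar$ are computed by lifting $x$ to $PA$ with the other faces vanishing and applying $d_{m}\dbar_{n}$, which on such lifts equals $\del\delbar$ up to sign. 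This identification is only needed for the concrete description, not for the existence of the isomorphism.
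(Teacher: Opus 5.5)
Your strategy is the paper's: contractibility of $PA$, then \Cref{BC-vanish-imply-A-vanish}, \Cref{homologyLES} and \Cref{Aeppli-is-BC-of-loop}. However, the step you flag as the main obstacle cannot be carried out, because it is false. The map $\mathrm{ev}\colon PA\to A$ need not be levelwise surjective. Take $A$ to be the constant bisimplicial group $\ZZ$. Since $\Delta^{m,n}\times\Delta^{1,1}$ is connected, every map from it to $A$ is constant, and vanishing on $\Delta^{m,n}\times\Lambda^{1,1}$ forces it to be $0$. So $PA=\Omega A=0$, while $A_{m,n}=\ZZ$ in every bidegree. More generally $\mathrm{ev}(PA)_{0,0}=d_1\dbar_1(\ker d_0\cap\ker\dbar_0)=\del\delbar NA_{1,1}$, which is usually a proper subgroup of $A_{0,0}$. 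Hence $0\to\Omega A\to PA\to A\to 0$ is not a short exact sequence of Moore bicomplexes, and no degeneracy formula or Dold--Kan computation will produce the lift. The paper's proof asserts this exactness without argument, so the gap is shared.

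The missing idea is to replace $A$ by the sub-bisimplicial abelian group $A':=\mathrm{ev}(PA)$ and check that this does not change Aeppli homology in the bidegrees you need. Since $\Omega A=\ker\mathrm{ev}$, the sequence $0\to\Omega A\to PA\to A'\to 0$ is levelwise exact, and your argument yields $H^\A_{m+1,n+1}(A')\cong H^\BC_{m,n}(\Omega A)$. Now let $p,q\ge 1$ and $x\in NA_{p,q}$. Then $x$ vanishes on $\Lambda^{p,q}$, so the square with $0\colon\Lambda^{p,q}\to PA$ on top and $x\colon\Delta^{p,q}\to A$ on the bottom commutes. Because $\mathrm{ev}$ is a fibration and $\Lambda^{p,q}\hookrightarrow\Delta^{p,q}$ is anodyne, $x$ lifts to $PA$; this is exactly the lift used to define $\rho$. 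Hence $NA'_{p,q}=NA_{p,q}\cap A'_{p,q}=NA_{p,q}$ for all $p,q\ge 1$. For $p,q\ge1$, the group $H^\A_{p,q}$ only involves bidegrees $(p,q)$, $(p+1,q)$, $(p,q+1)$, together with the kernel of $\del\delbar$ into bidegree $(p-1,q-1)$, and that kernel is unchanged when the target shrinks. So the inclusion $NA'\subset NA$ induces isomorphisms on $H^\A_{p,q}$ for $p,q\ge 1$. Combined with \Cref{DK-homotopy}, this gives $H^\A_{m+1,n+1}(A')\cong H^\A_{m+1,n+1}(A)$, naturally in $A$. With this insertion, the rest of your chain goes through as written.
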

\begin{proof}
	Consider the path space fibration $\Omega A\to PA\to A$. Note that $PA$ and $\Omega A$ inherit bisimplicial abelian group structures from $A$. Moreover the fibration sequence induces a short exact sequence of Moore bicomplexes.
	
	Since $PA$ is contractible, for $m,n\ge 0$ we have $\pi_{m,n}^\BC(PA,0)=0$, hence $H_{m,n}^\BC(PA)=0$. By \Cref{BC-vanish-imply-A-vanish}, this forces $H_{m,n}^\A(PA)=0$ for all $m,n$. Therefore by \Cref{homologyLES}, \Cref{homotopy-vs-homology-BC} and \Cref{Aeppli-is-BC-of-loop}, we have
	\[
	H_{m+1,n+1}^\A(A)\cong H_{m,n}^\BC(\Omega A)\cong \pi_{m,n}^\BC(\Omega A,0)\cong \pi_{m+1,n+1}^\A(A,0)
	\]
	for $m,n\ge 0$. The result thus follows.
\end{proof}
\begin{remark}
	By \Cref{character-A} there is a natural epimorphism $H^\A_{m,n}(NA)\to \pi^\A_{m,n}(A,0)$, but the author currently does not have a direct proof of its injectivity.
\end{remark}

\begin{example}
	Let $C$ be a bicomplex concentrated in positive bidegrees. Then 
	\[
	\pi_{\bullet,\bullet}^\BC\left(K(C),0\right)=H_{\bullet,\bullet}^\BC(C), \ \pi_{\bullet,\bullet}^\A\left(K(C),0\right)=H_{\bullet,\bullet}^\A(C).
	\]
\end{example}

\section{Further discussions}
One can start transporting results in simplicial homotopy theory and pluripotential homotopy theory to the bisimplicial setting. Here we list some other problems that the author finds interesting.

The present work is inspired by recent developments at the interface of homotopy theory and complex geometry, particularly those emphasizing the roles of Bott-Chern and Aeppli cohomology and cohomotopy, including \cite{angella2013lemma, stelzig2025pluripotential, milivojevic2024bigraded, cirici2025structures}. Historically, Bott-Chern and Aeppli cohomology were originally introduced in the study of complex manifolds \cite{bott1965hermitian, aeppli1965cohomology}.
\begin{problem}[complex manifold]
	Is there a natural way to associate a bisimplicial set $\mathrm{Sing}^{hol}_{\bullet,\bullet}(X)$ to a complex manifold $X$ so that the Bott-Chern and Aeppli cohomology of $\mathrm{Sing}^{hol}_{\bullet,\bullet}(X)$, upon tensoring $\CC$, coincide with those of $X$?
\end{problem}
There are a priori obstructions because the Bott-Chern cohomology ring of $X$ may not be defined over $\ZZ$.

\begin{problem}[torsion]
	The rational part of the theory is expected to coincide with Stelzig's theory, which we know contains strictly finer information than the (simplicial-rational cdga) theory. Does the bisimplicial theory over $\ZZ$ contain more information in torsion than the simplicial theory?
\end{problem}

If the answer is positive, then can we use this extra torsion information to understand torsion in simplicial homotopy theory?

If the answer is negative, then it strongly suggests that every complex manifold should have a bisimplicial homotopy type modulo ambiguities from $\mathrm{Gal}(\RR/\QQ)$.

\begin{problem}[Serre duality]
	The bisimplicial theory appears to be the most natural place to discuss Serre duality over the integers. One can talk about Serre duality bisimplicial homotopy types, analogous to the discussion of Poincar\'e duality homotopy types. Then one can start building a surgery theory with Serre duality instead of Poincar\'e duality. Is this worth investigating?
\end{problem}
 A large part of classical surgery theory, at least up to topological or PL theory, is purely homotopy theoretical. The Fr\"olicher spectral sequence behaves similarly to a surgery procedure---duality is preserved along the way while some cohomology classes are annihilated.

\bibliographystyle{alpha}
\bibliography{ref}

\Addresses

\end{document}